\numberwithin{equation}{section}
\theoremstyle{plain}
\newtheorem{thm}{Theorem}[section]
\newtheorem{lem}[thm]{Lemma}
\newtheorem{prop}[thm]{Proposition}
\title{A perturbation problem involving singular perturbations of domains for Hamilton-Jacobi equations}
\author{Taiga Kumagai}
\date{}
\begin{document}

\maketitle

\begin{abstract}
We investigate a singular perturbation for Hamilton-Jacobi equations in an open subset of two dimensional Euclidean space,
where the set is determined through a Hamiltonian and the Hamilton-Jacobi equations are
the dynamic programming equations for optimal control of the Hamiltonian flow of the Hamiltonian.
We establish the convergence of solutions of the Hamilton-Jacobi equations  
and identify the limit of the solutions as the solution of systems of ordinary differential equations on a graph.
The perturbation is singular in the sense that the domain degenerates to the graph in the limiting process.
Our result can be seen as a perturbation analysis, in the viewpoint of optimal control, of the Hamiltonian flow.

\textit{Key Words and Phrases.} Singular perturbation, Hamilton-Jacobi equations, Singular perturbation of domains.

2010 \textit{Mathematics Subject Classification Numbers.} 35B40, 49L25.
\end{abstract}

\section{Introduction}

In this paper we consider the asymptotic behavior, as $\varepsilon \to 0+$,
of the solution $u^\varepsilon$ of the boundary value problem
for the Hamilton-Jacobi equation, with a small parameter $\varepsilon > 0$,
\begin{align} \label{epHJ} 
\begin{cases} 
\lambda u^\varepsilon - \cfrac{b \cdot Du^\varepsilon}{\varepsilon} + G(x, Du^\varepsilon ) = 0 \ \ \ &\text{ in } \Omega, \tag{$\mathrm{HJ}^\varepsilon$} \\ 
u^\varepsilon = g^\varepsilon \ \ \ &\text{ on } \partial \Omega.
\end{cases}
\end{align}

Here $\lambda$ is a positive constant,
$\Omega$ is an open subset of $\mathbb R^2$ with boundary $\partial \Omega$,
$u^\varepsilon : \overline \Omega \to \mathbb R$ is the unknown,
$G : \overline \Omega \times \mathbb R^2 \to \mathbb R$ and $g^\varepsilon : \partial \Omega \to \mathbb R$ are given functions, 
and $b : \mathbb R^2 \to \mathbb R^2$ is a Hamiltonian vector field,
that is, for a given Hamiltonian $H : \mathbb R^2 \to \mathbb R$,
\begin{equation*}
b = (H_{x_2}, -H_{x_1}),
\end{equation*}
where the subscript $x_i$ indicates
the differentiation with respect to the variable $x_i$.

Let us consider an optimal control problem,
where the state is described by the initial value problem
\begin{align} \label{state}
\begin{cases}
\dot X^\varepsilon (t) = \cfrac{1}{\varepsilon} \, b(X^\varepsilon (t)) + \alpha (t) \ \ \ \text{ for } t \in \mathbb R, \\
X^\varepsilon (0) = x \in \mathbb R^2, \\
\alpha \in L^\infty (\mathbb R ; \mathbb R^2), 
\end{cases}
\end{align}
of which the solution will be denoted by $X^\varepsilon (t, x, \alpha)$,
the discount rate and pay-off are given by $\lambda$ and $g^\varepsilon$, respectively, 
and the running cost is given by the function $L$,
called \textit{Lagrangian} of $G$, defined by
\begin{equation*}
L(x, \xi ) = \sup_{p \in \mathbb R^2} \{ -\xi \cdot p - G(x, p) \} \ \ \ \text{ for } (x, \xi ) \in \overline \Omega \times \mathbb R^2.
\end{equation*} 
Then \eqref{epHJ} is the dynamic programming equation for this optimal control problem.

We may regard \eqref{state} as an equation
obtained by perturbing by control $\alpha$ the Hamiltonian system 
\begin{align}
\begin{cases} \label{epHS}
\dot X^\varepsilon (t) = \cfrac{1}{\varepsilon} \, b(X^\varepsilon (t)) \ \ \ \text{ for } t \in \mathbb R, \\ \tag{$\mathrm{HS}^\varepsilon$}
X^\varepsilon (0) = x \in \mathbb R^2,
\end{cases}
\end{align}
of which the solution will be denoted by $X^\varepsilon (t, x)$.

Perturbation problems for Hamiltonian flows,
similar to ours, were studied by Freidlin-Wentzell \cites{FW}. 
In their work \cites{FW}, they considered the stochastic perturbation of \eqref{epHS}
\begin{align} \label{SDE}
\begin{cases}
dX_t^\varepsilon  = \cfrac{1}{\varepsilon} \, b(X_t^\varepsilon ) dt + dW_t \ \ \ \text{ for } t \in \mathbb R, \\
X(0) = x \in \mathbb R^2,
\end{cases}
\end{align}  
where $W_t$ is a standard two dimensional Brownian motion,
and associated with \eqref{SDE} is, in place of \eqref{epHJ},
the boundary value problem for the linear second-order elliptic partial differential equation
\begin{align} \label{LDE}
\begin{cases}
- \, \cfrac{1}{2} \, \Delta u^\varepsilon - \cfrac{b \cdot Du^\varepsilon}{\varepsilon} = f \ \ \ &\text{ in } \Omega, \\
u^\varepsilon = g^\varepsilon \ \ \ &\text{ on } \partial \Omega,
\end{cases}
\end{align}
where $f \in C(\overline \Omega)$ is a given function.

In \cites{FW}, authors proved that solutions of \eqref{LDE} converge,
as $\varepsilon \to 0+$, to solutions of a boundary value problem
for a system of ordinary differential equations (odes, for short)
on a graph by a probabilistic approach.
The domain $\Omega$ thus degenerates to a graph in the limiting process,
which we call \textit{singular perturbations of domains}.
More precisely, the limit of the function $u^\varepsilon$, as $\varepsilon \to 0+$, 
becomes a function which is constant along each component of level sets of the Hamiltonian $H$.
As a consequence, a natural parametrization to describe the limiting function is
to use the height of the Hamiltonian $H$ and not the original two-dimensional variables (see Fig. 2 below).
After work of \cites{FW}, such problems have been studied by many authors, including Freidlin-Wentzell \cites{FW},
and many of them studied by the probabilistic approach. 
In a recent study, Ishii-Souganidis \cites{IS} obtained, by using pure pde-techniques, 
similar results for linear second-order degenerate elliptic partial differential equations.

In the view point of pde, 
linear differential equations, like \eqref{LDE}, give the basis 
for studying stochastic perturbations of \eqref{epHS},
while nonlinear differential equations, like \eqref{epHJ}, play the same role 
for perturbations of \eqref{epHS} by control terms.

Here we treat \eqref{epHJ}, establish the convergence
of the solution $u^\varepsilon$ of \eqref{epHJ} to a function on a graph
and identify the limit of $u^\varepsilon$ as a unique solution of a boundary value problem 
for a system of odes on the graph.
The result is stated in Theorem \ref{main}. 
The argument for establishing this result depends heavily on
viscosity solution techniques including the perturbed test function method 
as well as representations, as value functions in optimal control, of solutions of \eqref{epHJ}.

In \cites{AT}, authors treat a problem similar to the above. 
They consider general Hamilton-Jacobi equations in optimal control 
on an unbounded thin set converging to a graph,
prove the convergence of the solutions, and identify the limit of the solutions.

An interesting point of our result lies in that we have to treat 
a non-coercive Hamiltonian in Hamilton-Jacobi equation \eqref{epHJ}. 
Many authors in their studies on Hamilton-Jacobi equations
on graphs, including \cites{AT}, 
assumes, in order to guarantee the existence of continuous solutions, 
a certain coercivity of the Hamiltonian in the Hamilton-Jacobi equations, which corresponds, in terms of 
optimal control, a certain controllability of the dynamics.  
In our result, we also make a coercivity assumption (see (G4) below) 
on the unperturbed Hamiltonian, called $G$ in \eqref{epHJ},  
but, because of the term $b \cdot Du^\varepsilon/\varepsilon$ in \eqref{epHJ},
when $\varepsilon > 0$ is very small, the perturbed Hamiltonian becomes non-coercive
and the controllability of the dynamics \eqref{state} breaks down. 
A crucial point in our study is that, when $\varepsilon$ is very small, 
the perturbed term $b \cdot Du^\varepsilon/\varepsilon$ 
makes the solution $u^\varepsilon$ nearly constant along the level set of the 
Hamiltonian $H$ while the perturbed Hamiltonian 
$-b(x) \cdot p/\varepsilon + G(x,p)$ in 
\eqref{epHJ} is ``coercive in the direction orthogonal to $b(x)$'', that is 
the direction of the gradient $DH(x)$. 
Heuristically at least, these two characteristics combined together allow us 
to analyze the asymptotic behavior of the solution $u^\varepsilon$ of 
\eqref{epHJ} as $\varepsilon \to 0+$.

This paper is organized as follows.
In the next section, we describe precisely the 
Hamitonian $H$, the domain $\Omega$ as well as 
some relevant properties of the Hamiltonian system \eqref{HS}, 
present the assumptions on the unperturbed Hamiltonian $G$ used throughout the paper,
and give a basic existence and uniqueness proposition (see Proposition \ref{viscosity-sol}) 
for \eqref{epHJ} and a proposition concerning the dynamic programming principle.  
Section 3, which is divided into two parts, is devoted to establishing Theorem \ref{main}.
In the first part, we give some observations on the odes 
\eqref{lim-HJ} below in Section 3 on the graph,
which the limiting function of the solution $u^\varepsilon$ of \eqref{epHJ} should satisfy. 
The last part is devoted to the proof of Theorem \ref{main}. 
It relies on three propositions.
They are the characterization of the half relaxed-limits of $u^\varepsilon$
(Theorem \ref{characterize} in Section 4),
and two estimates for the half relaxed-limits of $u^\varepsilon$
(Lemmas \ref{v^+-u_i^+} and \ref{v^--d_0} in Sections 5 and 6, respectively). 
In Section 7, we are concerned with the admissibility of boundary data for the odes on the graph. 
In our formulation of the main result (see Theorem \ref{main}), 
we assume rather implicit (or ad hoc) assumptions (G5) and (G6). 
Indeed, (G5) readily gives us a unique viscosity solution of \eqref{epHJ} 
and (G6) essentially assumes no boundary-layer phenomenon for the solution of 
\eqref{epHJ} in the limiting process as $\varepsilon \to 0+$. 
It is thus important to know when (G5) and (G6) hold. 
Section 7 focuses to give a sufficient condition under which (G5) and (G6) hold.

Before closing the introduction, we give a few of our notations.

\subsection*{Notation}

For $c, d \in \mathbb R$, we write $c \wedge d = \min \{ c, d \}$ and $c \vee d = \max \{ c, d \}$. 
For $r > 0$, we denote by $B_r$ the open disc centered at the origin with radius $r$.
We write $\mathbf{1}_E$ for the characteristic function of the set $E$.

\section{Preliminaries}

\begin{figure}[t]
\centering
\input{Hamiltonian.tex}
\caption{\ }
\end{figure}

\subsection{The domain $\Omega$}

We assume the following (H1)--(H3) throughout this paper.

\begin{itemize}
\item[(H1)] $H \in C^2 (\mathbb R^2)$ and $\lim_{|x| \to \infty} H(x) = \infty$.

\item[(H2)] $H$ has exactly three critical points $z_1, z_3 \in \mathbb R^2$ 
and $z_2 = (0,0) := 0$.

\item[(H3)] There exists $\kappa > 0$ such that 
                \begin{equation*}
				H(x_1, x_2) = x_2^2 -x_1^2 \ \ \ \text{ on } \overline B_\kappa.
                \end{equation*}
\end{itemize}
The graph of the Hamiltonian $H$ 
satisfying (H1)--(H3) is depicted in Fig. 1 above.

It follows from these assumptions that
for any $h > 0$, the open set $\{ x \in \mathbb R^2 \ | \ H(x) < h \}$ is connected,
and the open set $\{ x \in \mathbb R^2 \ | \ H(x) < 0 \}$ consists of
two connected components $D_1$ and $D_3$ such that $z_1 \in D_1$ and $z_3 \in D_3$.
We may assume that $(-\kappa, 0) \in D_1$ and $(\kappa, 0) \in D_3$.

The shape of the domain $\Omega$ is depicted in Fig. 2(a) below.

We choose $h_1, h_2, h_3 \in \mathbb R$ so that
\begin{equation*}
h_1, h_3 < 0 < h_2 \ \ \ \text{ and } \ \ \ H(z_i) < h_i \ \ \ \text{ for } i \in \{ 1, 3 \}, 
\end{equation*}
and consider the intervals
\begin{equation*}
J_2 = (0, h_2) \ \ \ \text{ and } \ \ \ J_i = (h_i, 0) \ \ \ \text{ for } i \in \{ 1, 3 \}, 
\end{equation*}
the open sets
\begin{equation*}
\Omega_2 = \{ x \in \mathbb R^2 \ | \ H(x) \in J_2 \} \ \ \ \text{ and } \ \ \ \Omega_i = \{ x \in D_i \ | \ H(x) \in J_i \} \ \ \ \text{ for } i \in \{ 1, 3 \}, 
\end{equation*}
and their ``outer" boundaries
\begin{equation*}
\partial_i \Omega = \{ x \in \overline \Omega_i \ | \ H(x) = h_i \} \ \ \ \text{ for } i \in \{ 1, 2, 3 \}.
\end{equation*}

Now we introduce $\Omega$ as the open connected set 
\begin{equation*}
\Omega = \left( \bigcup_{i = 1}^3 \Omega_i \right) \cup \{ x \in \mathbb R^2 \ | \ H(x) = 0 \},
\end{equation*}
with the boundary
\begin{equation*}
\partial \Omega = \bigcup_{i = 1}^3 \partial_i \Omega.
\end{equation*}
It is obvious that, by replacing $\kappa$ by a smaller positive number if necessary,
we may assume that $\overline B_\kappa \subset \Omega$ in (H3).
For later convenience, the constant $\kappa > 0$ in (H3) will be always assumed
small enough so that $\overline B_\kappa \subset \Omega$.

We define loops $c_i (h)$ for $h \in \bar J_i$ and $i \in \{ 1, 2, 3 \}$ by
\begin{equation*}
c_i (h) = \{ x \in \overline \Omega_i \ | \ H(x) = h \}.
\end{equation*}

If we identify all points belonging to a loop $c_i (h)$,
then we obtain a graph, which is shown in Fig. 2(b),
consisting of three segments, parametrized by $J_1$, $J_2$, and $J_3$.

\begin{figure}[t]
\centering
\input{Omega.tex}
\caption{\ }
\end{figure}

We consider
the initial value problem
\begin{align} \label{HS}
\begin{cases}
\dot X (t) = b(X(t)) \ \ \ \text{ for } t \in \mathbb R, \\ \tag{HS}
X(0) = x \in \mathbb R^2,
\end{cases}
\end{align}
which admits a unique global in time solution $X = X(t, x)$.
Note that, in view of (H1),
\begin{equation*}
X, \dot X \in C^1 (\mathbb R \times \mathbb R^2 ; \mathbb R^2) \ \ \ \text{ and } \ \ \ H(X(t, x)) = H(x) \ \ \ \text{ for all } (t, x) \in \mathbb R \times \mathbb R^2. 
\end{equation*}

Fix $i \in \{ 1, 2, 3 \}$ and $h \in \bar J_i \setminus \{ 0 \}$.
Since $\{ X(t, x) \ | \ t \in \mathbb R \} \subset c_i (h)$ if $x \in c_i (h)$,
and $DH(x) \not= 0$ for all $x \in c_i (h)$, 
it is easily seen that the map $t \mapsto X(t, x)$ is periodic in $t$ for all $x \in c_i (h)$,
and that the minimal period of $X(\cdot, x)$ is independent of $x \in c_i (h)$.
For $x \in c_i (h)$, let $T_i (h)$ be the minimal period of $X(\cdot, x)$, that is,
\begin{equation*}
T_i (h) = \inf \{ t > 0 \ | \ X(t, x) = x \}.
\end{equation*}

\begin{lem} \label{T_i}
For all $i \in \{ 1, 2, 3 \}$, $T_i \in C^1 (\bar J_i \setminus \{ 0 \})$ 
and $T_i (h) = O(|\log |h||)$ as $(-1)^ih \to 0+$.
\end{lem}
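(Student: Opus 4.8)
The plan is to begin from the elementary arc-length representation of the period. Fix $i\in\{1,2,3\}$ and $h\in\bar J_i\setminus\{0\}$. Since $c_i(h)$ is the image of a nonconstant periodic solution $X(\cdot,x_0)$ of \eqref{HS} with minimal period $T_i(h)$, and the speed along it is $|\dot X|=|b(X)|=|DH(X)|>0$, reparametrising by arc length $s$ yields
\begin{equation*}
T_i(h)=\int_{c_i(h)}\frac{ds}{|DH|},
\end{equation*}
which is finite because $|DH|$ is bounded away from $0$ on the compact orbit $c_i(h)$. This identity is the tool for both claims.

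For the $C^1$ assertion I would use a Poincar\'e section. Fix $h_0\in\bar J_i\setminus\{0\}$ and $x_0\in c_i(h_0)$, and let $\Sigma$ be a short line segment through $x_0$ parallel to $DH(x_0)$, hence transversal to $b(x_0)$. Because $H\in C^2$ (so $b\in C^1$ and the flow $X$ is $C^1$ in $(t,x)$), the implicit function theorem applied to $H|_\Sigma$, whose derivative at $x_0$ is $|DH(x_0)|\neq0$, gives a $C^1$ map $h\mapsto\sigma(h)\in\Sigma$ with $H(\sigma(h))=h$ and $\sigma(h_0)=x_0$; and applied to $(t,h)\mapsto\langle X(t,\sigma(h))-x_0,\,b(x_0)\rangle$ near $(T_i(h_0),h_0)$ — the $t$-derivative there being $|b(x_0)|^2\neq0$ — it gives a $C^1$ function $\tau(h)$ with $\tau(h_0)=T_i(h_0)$ and $X(\tau(h),\sigma(h))\in\Sigma$. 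Since $H$ is conserved by the flow and $\Sigma\cap\{H=h\}=\{\sigma(h)\}$ near $x_0$, this forces $X(\tau(h),\sigma(h))=\sigma(h)$, and by continuity $\tau(h)$ is the \emph{minimal} positive return time, i.e.\ $\tau(h)=T_i(h)$. Hence $T_i\in C^1$ near $h_0$; since $h_0\in\bar J_i\setminus\{0\}$ is arbitrary (the argument being one-sided at the endpoint $h_i$ of $\bar J_i$), $T_i\in C^1(\bar J_i\setminus\{0\})$.

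For the asymptotics, note that $(-1)^i h\to0+$ is exactly the regime in which $c_i(h)$ contracts onto the saddle $z_2=0$, so I would split the integral at $\partial B_\kappa$. On $\overline\Omega_i\setminus B_\kappa$ there are no critical points of $H$ ($z_1,z_3$ have $H$-values below $h_i$ and lie outside $\overline\Omega_i$, while $z_2\in B_\kappa$), so $|DH|\ge m_i>0$ there; moreover the arcs $c_i(h)\setminus B_\kappa$ vary continuously with $h$ up to the critical value $0$ and therefore have uniformly bounded length, so this part of the integral is $O(1)$. On $\overline B_\kappa$, (H3) gives $H=x_2^2-x_1^2$ and $|DH|=2|x|$, and $c_i(h)\cap\overline B_\kappa$ consists of finitely many arcs of the hyperbola $\{x_2^2-x_1^2=h\}$; parametrising, e.g.\ $x_2=\pm\sqrt{h+x_1^2}$ when $h>0$, one computes $ds/|DH|=dx_1/\bigl(2\sqrt{h+x_1^2}\bigr)$, so the contribution is at most a fixed multiple of $\mathrm{arcsinh}(\kappa/\sqrt{|h|})=O(|\log|h||)$, with the roles of $x_1$ and $x_2$ interchanged when $h<0$. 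Adding the two contributions gives $T_i(h)=O(|\log|h||)$.

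The step I expect to require the most care is the uniform bound on the length of the outer arcs $c_i(h)\setminus B_\kappa$ as $h$ approaches the critical value: one must rule out that they acquire extra length by oscillating near the critical level set $\{H=0\}$, which is where the absence of critical points of $H$ on $\overline\Omega_i\setminus B_\kappa$, and the resulting smooth (hence length-continuous) dependence of the level sets on $h$, enters. By contrast the near-saddle estimate is explicit thanks to (H3), and the $C^1$ regularity is a routine application of the implicit function theorem together with the $C^1$ dependence of the flow \eqref{HS} on initial data.
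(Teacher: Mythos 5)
Your proof is correct. For this lemma the paper itself gives no argument and simply defers to Ishii--Souganidis \cite{IS}; you supply a self-contained proof along the standard lines. The arc-length identity $T_i(h)=\int_{c_i(h)}ds/|DH|$, the Poincar\'e section plus implicit function theorem for $C^1$ regularity (including the one-sided version at the endpoint $h_i$), and the split of the integral across $\partial B_\kappa$ are all sound. The inner computation checks out: with $H=x_2^2-x_1^2$ on $\overline B_\kappa$, parametrising the branch $x_2=\sqrt{h+x_1^2}$ gives $ds=\sqrt{(2x_1^2+h)/(x_1^2+h)}\,dx_1$ and $|DH|=2\sqrt{2x_1^2+h}$, so indeed $ds/|DH|=dx_1/\bigl(2\sqrt{h+x_1^2}\bigr)$ and the inner contribution is $O(|\log|h||)$. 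You also correctly identify the one point requiring care, the uniform bound on the length of the outer arcs $c_i(h)\setminus B_\kappa$ as $h\to0$, and the justification you give (no critical points of $H$ on the compact set $\overline\Omega_i\setminus B_\kappa$, hence $|DH|\ge m_i>0$ and the level sets vary continuously up to the critical level) is adequate; one could alternatively observe that each such arc is traversed in time $\le T_i(h)-$ (inner time), and bound its length by $\sup|DH|$ times the sojourn time, but your compactness argument is cleaner. The only step stated a bit tersely is that the $C^1$ return time $\tau(h)$ obtained from the IFT is the \emph{minimal} period: if not, $\tau(h)=k(h)T_i(h)$ with $k(h)\ge2$ and bounded $\tau$, forcing $T_i(h)$ to accumulate at some $s\in(0,T_i(h_0)/2]$ with $X(s,x_0)=x_0$, contradicting the minimality of $T_i(h_0)$; $s=0$ is excluded because $|b|$ is bounded on the orbit. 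This is implicit in your ``by continuity'' but worth spelling out.
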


The proposition above can be found in \cites{IS} in a slightly different context,
to which we refer for the proof.

Finally we observe that, for any $i \in \{ 1, 2, 3 \}$ and $h \in \bar J_i \setminus \{ 0 \}$,
the length $L_i (h)$ of the loop $c_i (h)$ is described by
\begin{equation} \label{def-L_i}
L_i (h) = \int_0^{T_i (h)} |\dot X(t, x)| \, dt = \int_0^{T_i (h)} |DH(X(t, x))| \, dt,
\end{equation}
where $x \in \overline \Omega$ is chosen so that $x \in c_i (h)$.
The value of the integral above is independent of the choice of $x \in c_i (h)$.

The following lemma is a consequence of Lemma \ref{T_i} and \eqref{def-L_i}.

\begin{lem} \label{L_i}
For all $i \in \{ 1, 2, 3 \}$, $L_i \in C^1 (\bar J_i \setminus \{ 0 \})$.
\end{lem}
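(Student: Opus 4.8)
The plan is to differentiate the representation \eqref{def-L_i} directly, using Lemma~\ref{T_i} for the regularity of the period $T_i$ and the fact that, for $h \in \bar J_i \setminus \{0\}$, the loop $c_i(h)$ contains no critical point of $H$, so that $|DH|$ is smooth and nonvanishing along it.

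First, fix $i \in \{1,2,3\}$ and $h_0 \in \bar J_i \setminus \{0\}$; since $C^1$-regularity is local and, by \eqref{def-L_i}, $L_i(h)$ is independent of the base point $x \in c_i(h)$, it suffices to produce a $C^1$ formula for $L_i$ on a (possibly one-sided) neighborhood of $h_0$. I would pick $x_0 \in c_i(h_0)$ and, using $DH(x_0) \neq 0$ together with the implicit function theorem — or, equivalently, flowing along the normalized gradient field $y \mapsto DH(y)/|DH(y)|^2$, which raises $H$ at unit speed — construct $\delta > 0$ and a $C^1$ map $h \mapsto \zeta(h)$ on $(h_0 - \delta, h_0 + \delta) \cap \bar J_i$ with $\zeta(h_0) = x_0$ and $H(\zeta(h)) = h$. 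Shrinking $\delta$, continuity forces $\zeta(h) \in \Omega_i$ (resp. $\zeta(h) \in D_i$ for $i \in \{1,3\}$), so $\zeta(h) \in c_i(h)$, and then \eqref{def-L_i} reads
\[
L_i(h) = \int_0^{T_i(h)} \big| DH\big( X(t, \zeta(h)) \big) \big| \, dt .
\]

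Next I would check that $\Phi(t,h) := \big| DH( X(t, \zeta(h)) ) \big|$ is $C^1$ on a neighborhood of the compact set $[0, T_i(h_0)] \times \{h_0\}$: as $(t,h)$ varies there the points $X(t,\zeta(h))$ remain in a compact neighborhood of $c_i(h_0)$ on which $DH$ is bounded away from $0$ (by (H1)--(H2) and $h_0 \neq 0$), so, since $X \in C^1(\mathbb R \times \mathbb R^2;\mathbb R^2)$, $\zeta \in C^1$, $DH \in C^1$ (as $H \in C^2$) and $y \mapsto |y|$ is $C^\infty$ away from the origin, $\Phi$ and its first partial derivatives are continuous there. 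Because $T_i(h)$ is a period, $\Phi(T_i(h),h) = |DH(\zeta(h))|$. Combining this with $T_i \in C^1(\bar J_i\setminus\{0\})$ and $T_i > 0$ (Lemma~\ref{T_i}), the fundamental theorem of calculus and differentiation under the integral sign then yield that $L_i$ is $C^1$ near $h_0$, with
\[
L_i'(h) = |DH(\zeta(h))| \, T_i'(h) + \int_0^{T_i(h)} \partial_h \Phi(t,h) \, dt ,
\]
and, $h_0$ being arbitrary, $L_i \in C^1(\bar J_i \setminus \{0\})$.

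I do not expect a serious obstacle. The only points needing care are (i) producing a base point $\zeta(h) \in c_i(h)$ depending in a $C^1$ way on $h$ and lying in the correct connected component — handled by the transverse gradient section and a continuity argument — and (ii) combining the variable upper limit $T_i(h)$ with differentiation under the integral sign, which is the standard Leibniz rule once $\Phi$, $\partial_h\Phi$ and $T_i'$ are known to be continuous.
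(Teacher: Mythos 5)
Your proof is correct. The paper itself gives no argument for this lemma beyond citing Lemma~\ref{T_i} and formula \eqref{def-L_i} ``as a consequence,'' so there is no official proof to compare against; your argument is precisely the natural way to make that statement precise. Two remarks. First, the $C^1$ section $h \mapsto \zeta(h)$ you build by flowing along $DH/|DH|^2$ is exactly the map $Y_i$ that the paper constructs later (at the start of Section~4) for the purpose of defining $\tau_i$, so your construction is fully in the spirit of the paper. Second, your regularity bookkeeping is right: $H \in C^2$ gives $DH \in C^1$, $X \in C^1(\mathbb R\times\mathbb R^2)$ and $\zeta \in C^1$ give $X(t,\zeta(h)) \in C^1$ in $(t,h)$, the chain rule then gives $DH(X(t,\zeta(h))) \in C^1$ because $D^2H$ is continuous, and since $DH$ is bounded away from zero near $c_i(h_0)$ (as $c_i(h_0)$ avoids the critical points for $h_0\neq 0$), $|\cdot|$ is smooth there; together with $T_i \in C^1(\bar J_i\setminus\{0\})$ from Lemma~\ref{T_i} this is exactly what the Leibniz rule needs, and your formula for $L_i'$ (including the boundary term $|DH(\zeta(h))|\,T_i'(h)$ coming from $\Phi(T_i(h),h)=|DH(\zeta(h))|$ via periodicity) is correct.
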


We focus on the domain given by Hamiltonian $H$
satisfying (H1)--(H3) (see Fig. 1) in this paper for simplicity of presentation. 
It is possible to treat many junctions if Hamiltonian $H$ has only critical points that are non-degenerate. 
However, the argument in this paper does not cover the case when critical points of Hamiltonian $H$ are degenerate,
and thus junctions with more than three line segments are outside the scope of this paper.

\subsection{Viscosity solutions of \eqref{epHJ}}

We prove here the existence and uniqueness of viscosity solutions of \eqref{epHJ},
which are continuous up to the boundary. 
We do not recall here the definition and basic properties of viscosity solutions
and we refer instead to \cites{BCD, CIL, L} for them.

We need the following assumptions on $G$ and $g^\varepsilon$.

\begin{itemize}
\item[(G1)] $G \in C(\overline \Omega \times \mathbb R^2)$.

\item[(G2)] There exists a modulus $m$ such that
                \begin{equation*}
                |G(x, p) - G(y, p)| \leq m(|x - y|(1 + |p|)) \ \ \ \text{ for all } x, y \in \overline \Omega \text{ and } p \in \mathbb R^2.
                \end{equation*}

\item[(G3)] For each $x \in \overline \Omega$, the function $p \mapsto G(x, p)$ is convex on $\mathbb R^2$.

\item[(G4)] $G$ is coercive, that is, 
               \begin{equation*}
               G(x, p) \to \infty \ \ \ \text{ uniformly for } x \in \overline \Omega \text{ as } |p| \to \infty.
               \end{equation*}
\end{itemize}

Under assumptions (G1), (G3), and (G4), there exist $\nu, M > 0$ such that
\begin{equation*}
G(x, p) \geq \nu |p| - M \ \ \ \text{ for  all } (x, p) \in \overline \Omega \times \mathbb R^2,
\end{equation*}
which yields, together with the definition of $L$,
\begin{equation} \label{upper-bound-L}
L(x, \xi ) \leq M \ \ \ \text{ for all } (x, \xi) \in \overline \Omega \times \overline B_\nu.
\end{equation}
Also, by the definition of $L$, we get
\begin{equation} \label{lower-bound-L}
L(x, \xi ) \geq -G(x, 0) \ \ \ \text{ for all } (x, \xi) \in \overline \Omega \times \mathbb R^2, 
\end{equation}
and, using (G1), we see that $L$ is lower semicontinuous
in $\overline \Omega \times \mathbb R^2$.

\begin{itemize}  
\item[(G5)] There exists $\varepsilon_0 \in (0, 1)$ such that
				$\{ g^\varepsilon \}_{\varepsilon \in (0, \varepsilon_0)} \subset C(\partial \Omega)$
                is uniformly bounded on $\partial \Omega$,
                and, for any $\varepsilon \in (0, \varepsilon_0)$, $g^\varepsilon$ satisfies the condition
				\begin{equation*}
                g^\varepsilon (x) \leq \int_0^{\vartheta} L(X^\varepsilon (t, x, \alpha ), \alpha (t)) e^{-\lambda t} \, dt
				+ g^\varepsilon (y) e^{-\lambda \vartheta} 
                \end{equation*}
                for all $x, y \in \partial \Omega$, $\vartheta \in [0, \infty)$, and $\alpha \in L^\infty (\mathbb R; \mathbb R^2)$,
                where the conditions
                \begin{equation*}
                X^\varepsilon (\vartheta, x, \alpha) = y \ \ \ \text{ and } \ \ \
				X^\varepsilon (t, x, \alpha) \in \overline \Omega \ \ \ \text{ for all } t \in [0, \vartheta]
                \end{equation*}
                are satisfied, that is, $\vartheta$ is a visiting time at the target $y$
				of the trajectory $\{ X^\varepsilon (t, x, \alpha) \}_{t \geq 0}$ constrained in $\overline \Omega$.
\end{itemize}

Condition (G5) is a sort of compatibility condition,
and we are motivated to introduce this condition by \cites{L},
where conditions similar to (G5) are used to guarantee
the continuity of the value functions in optimal control.
Here (G5) has the same role as those in \cites{L} and is to ensure the continuity
of the function $u^\varepsilon$ given by \eqref{represent} below.

\begin{prop} \label{viscosity-sol}
Assume that {\rm (G1)--(G5)} hold.
For $\varepsilon \in (0, \varepsilon_0)$,
we define the function $u^\varepsilon : \overline \Omega \to \mathbb R$ by
\begin{align} \label{represent}
\begin{aligned}
u^\varepsilon (x) = \inf \Big\{ \int_0^{\tau^\varepsilon} L(X^\varepsilon (t, x, \alpha), \alpha &(t))e^{-\lambda t} \, dt \\
                         &+ g^\varepsilon (X^\varepsilon (\tau^\varepsilon, x, \alpha))e^{-\lambda \tau^\varepsilon}
						 \ | \ \alpha \in L^\infty (\mathbb R; \mathbb R^2) \Big\},
\end{aligned}
\end{align}
where $\tau^\varepsilon$ is a visiting time in $\partial \Omega$
of the trajectory $\{ X^\varepsilon (t, x, \alpha) \}_{t \geq 0}$ constrained in $\overline \Omega$,
that is, $\tau^\varepsilon$ is a nonnegative number such that
\begin{equation*}
X^\varepsilon (\tau^\varepsilon, x, \alpha) \in \partial \Omega \ \ \ \text{ and } \ \ \
X^\varepsilon (t, x, \alpha) \in \overline \Omega \ \ \ \text{ for all } t \in [0, \tau^\varepsilon].
\end{equation*} 
Then $u^\varepsilon$ is continuous on $\overline \Omega$,
the unique viscosity solution of \textrm{\eqref{epHJ}} and
satisfies $u^\varepsilon = g^\varepsilon$ on $\partial \Omega$.
Furthermore the family $\{ u^\varepsilon \}_{\varepsilon \in (0, \varepsilon_0)}$
is uniformly bounded on $\overline \Omega$.
\end{prop}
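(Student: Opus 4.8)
The plan is to follow the standard route for exit--time optimal control problems carrying a compatibility condition on the boundary data, as in \cites{L, BCD, CIL}, adapted to the singularly perturbed dynamics \eqref{state}, and then to promote $u^\varepsilon$ to a continuous, unique viscosity solution through Ishii's discontinuous viscosity solutions together with a comparison principle. For the \emph{uniform bound}: by \eqref{lower-bound-L} every admissible cost in \eqref{represent} is at least $-\lambda^{-1}\max_{\overline\Omega}|G(\cdot,0)|-\sup_{\varepsilon\in(0,\varepsilon_0)}\|g^\varepsilon\|_{L^\infty(\partial\Omega)}$, a lower bound for $u^\varepsilon$ independent of $x$ and $\varepsilon$. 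For the upper bound I would exhibit, for each $x\in\overline\Omega$, a control $\alpha$ with $|\alpha|\le\nu$ whose trajectory exits $\overline\Omega$ within a time $T_0$ \emph{independent of $x$ and $\varepsilon$}: since $DH\cdot b\equiv0$ one has $\tfrac{d}{dt}H(X^\varepsilon(t,x,\alpha))=DH(X^\varepsilon(t,x,\alpha))\cdot\alpha(t)$, an identity unaffected by the singular drift $b/\varepsilon$; taking $\alpha(t)=\pm\nu\,DH/|DH|$ drives $H(X^\varepsilon)$ monotonically toward the value $h_i$ of the nearest outer boundary $\partial_i\Omega$, and, using that $z_2=0$ is the only critical point of $H$ in $\overline\Omega$ together with the explicit form (H3) of $H$ near $0$, one checks that $|DH(X^\varepsilon(t))|$ stays bounded below after a controlled time, so that $\partial\Omega$ is reached by time $T_0$. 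With \eqref{upper-bound-L} this gives $u^\varepsilon(x)\le\lambda^{-1}M+\sup_\varepsilon\|g^\varepsilon\|_{L^\infty(\partial\Omega)}$, hence the asserted uniform boundedness.

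Next, from the dynamic programming principle (the proposition recalled above) one obtains, in the usual way, that the upper semicontinuous envelope $(u^\varepsilon)^{*}$ is a viscosity subsolution and the lower semicontinuous envelope $(u^\varepsilon)_{*}$ a viscosity supersolution of \eqref{epHJ} in the open set $\Omega$; here (G1)--(G4) enter through the facts that, $G$ being convex and coercive, $L$ is its genuine convex conjugate (lower semicontinuous, bounded below by \eqref{lower-bound-L}, superlinear in $\xi$) and $-b(x)\cdot p/\varepsilon+G(x,p)=\sup_{\xi\in\mathbb R^2}\{-(b(x)/\varepsilon+\xi)\cdot p-L(x,\xi)\}$.

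The heart of the matter is the boundary behaviour: one must show $(u^\varepsilon)^{*}\le g^\varepsilon$ and $(u^\varepsilon)_{*}\ge g^\varepsilon$ on $\partial\Omega$, which together with the continuity of $g^\varepsilon$ give $u^\varepsilon=g^\varepsilon$ on $\partial\Omega$ and continuity there. The first inequality is the easy one: for $x\in\overline\Omega$ near $x_0\in\partial_i\Omega$ use once more the steering control of the first step, driving $H(X^\varepsilon)$ toward $h_i$; since $DH\ne0$ on the regular level set $\partial_i\Omega$, the exit time is $O(\mathrm{dist}(x,\partial_i\Omega))\to0$, the running cost over it is $o(1)$ by \eqref{upper-bound-L}, and (for fixed $\varepsilon$) the exit point tends to $x_0$, so $u^\varepsilon(x)\le g^\varepsilon(x_0)+o(1)$ and $(u^\varepsilon)^{*}\le g^\varepsilon$ on $\partial\Omega$. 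For the reverse inequality, which is where (G5) is indispensable, take $x_n\to x_0\in\partial_i\Omega$ with $\rho_n:=|H(x_n)-h_i|\to0$ and an arbitrary control $\alpha$ with exit time $\tau_n$ and exit point $y_n:=X^\varepsilon(\tau_n,x_n,\alpha)\in\partial\Omega$. I would connect a suitably chosen boundary point $\widetilde x_n\in\partial_i\Omega$ to $x_n$ \emph{inside} $\overline\Omega$ by running $\alpha=\mp\nu\,DH/|DH|$ for a time $\delta_n$ with $\nu\int_0^{\delta_n}|DH|\,dt=\rho_n$: since $DH\ne0$ near $x_0$ this forces $\delta_n=O(\rho_n)\to0$ and fixes the tangential displacement between $\widetilde x_n$ and $x_n$ to be $O(\rho_n/\varepsilon)\to0$, so that $\widetilde x_n\to x_0$; the running cost of this connection is $\le M\delta_n=o(1)$ by \eqref{upper-bound-L}, and the concatenated trajectory runs from $\widetilde x_n$ to $y_n$ inside $\overline\Omega$ in time $\delta_n+\tau_n$. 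Applying (G5) to it and noting that $g^\varepsilon(y_n)e^{-\lambda(\delta_n+\tau_n)}=e^{-\lambda\delta_n}g^\varepsilon(y_n)e^{-\lambda\tau_n}$ recombines with the discounted running cost into $e^{-\lambda\delta_n}$ times the full cost of $\alpha$ from $x_n$, one gets $g^\varepsilon(\widetilde x_n)\le M\delta_n+e^{-\lambda\delta_n}\bigl(\int_0^{\tau_n}L\,e^{-\lambda t}\,dt+g^\varepsilon(y_n)e^{-\lambda\tau_n}\bigr)$, with $M\delta_n$ independent of $\alpha$; taking the infimum over $\alpha$ and then $n\to\infty$ (using $g^\varepsilon(\widetilde x_n)\to g^\varepsilon(x_0)$ and $\delta_n\to0$) yields $g^\varepsilon(x_0)\le\liminf_n u^\varepsilon(x_n)$, i.e.\ $(u^\varepsilon)_{*}\ge g^\varepsilon$ on $\partial\Omega$.

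Finally, $(u^\varepsilon)^{*}$ and $(u^\varepsilon)_{*}$ are a bounded subsolution and a bounded supersolution of \eqref{epHJ} in $\Omega$ attaining the same continuous boundary data; the comparison principle --- valid since the Hamiltonian $-b(x)\cdot p/\varepsilon+G(x,p)$ obeys the usual structure conditions ($b$ locally Lipschitz in $x$, (G2) for $G$, $\lambda>0$, $\Omega$ bounded), irrespective of any coercivity --- gives $(u^\varepsilon)^{*}\le(u^\varepsilon)_{*}$ on $\overline\Omega$; since always $(u^\varepsilon)_{*}\le u^\varepsilon\le(u^\varepsilon)^{*}$, $u^\varepsilon$ is continuous on $\overline\Omega$, is a viscosity solution of \eqref{epHJ} with $u^\varepsilon=g^\varepsilon$ on $\partial\Omega$, and uniqueness among such solutions follows from the same comparison principle. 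I expect the step $(u^\varepsilon)_{*}\ge g^\varepsilon$ on $\partial\Omega$ --- the absence of a boundary layer, which is precisely what (G5) is designed to secure --- to be the main obstacle: it hinges on the quantitative controllability near $\partial\Omega$ described above (available because $DH\ne0$ on each $\partial_i\Omega$), and on the recombination of the terminal boundary values that makes the estimate uniform in the control and insensitive to the length of the excursions $\tau_n$.
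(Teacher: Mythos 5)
Your proposal follows essentially the same route as the paper's own proof: uniform boundedness via the lower bound $L\ge -G(\cdot,0)$ and, for the upper bound, a transverse steering control $\pm\nu\,DH/|DH|$; sub-/supersolution properties of $(u^\varepsilon)^*$ and $(u^\varepsilon)_*$ via the dynamic programming principle; boundary attainment of $g^\varepsilon$ from above by steering directly to $\partial_i\Omega$ in time $O(\mathrm{dist}(x,\partial_i\Omega))$; boundary attainment from below by prepending a short steered trajectory from a boundary point $\widetilde x_n$ to $x_n$ and invoking (G5) on the concatenated trajectory; and a comparison theorem to force $(u^\varepsilon)^*\le(u^\varepsilon)_*$, hence continuity, hence uniqueness. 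The paper presents the boundary step as a proof by contradiction, but that is a cosmetic difference; the substance is identical, down to the choice of $\widetilde x_n$ as the time-reversed image of $x_n$ under a $\mp b/\varepsilon+\nu\,DH/|DH|$ flow, and down to the way the discounted running cost over $[0,\delta_n]$ recombines with the terminal factor.

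The one step that is genuinely incomplete in your writeup is the construction of an exiting trajectory from $x=0$. The steering control $\alpha=\pm\nu\,DH/|DH|$ is undefined at the origin, where $DH$ vanishes, and the remark that $|DH(X^\varepsilon(t))|$ ``stays bounded below after a controlled time'' merely restates the difficulty rather than resolving it: one must first prescribe a control that actually moves the state off the critical point so that the $DH$-steering becomes available. The paper does this explicitly: it sets $\delta=\kappa\wedge(\nu\varepsilon/4)$, runs the constant-speed trajectory $X(t)=(\nu t/2)(0,1)$ until time $t_0=2\delta/\nu$ (on $\overline B_\delta$, (H3) gives $|b|/\varepsilon\le 2\delta/\varepsilon\le\nu/2$ while $|\dot X|=\nu/2$, so the admissible control $\alpha=\dot X-b(X)/\varepsilon$ has $|\alpha|\le\nu$), and only then switches to the $DH$-steering; some such preliminary maneuver must be supplied. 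A smaller remark: you claim an exit time $T_0$ uniform in $x$ and $\varepsilon$, but this is neither obvious nor needed --- the discount factor $e^{-\lambda t}$ together with \eqref{upper-bound-L} already caps the discounted running and terminal cost over any exit time at $(M/\lambda)\vee\sup_\varepsilon\|g^\varepsilon\|_{L^\infty(\partial\Omega)}$, which is how the paper obtains the uniform bound without controlling $\tau^\varepsilon$.
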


\begin{proof}
We begin by showing that
$\{ u^\varepsilon \}_{\varepsilon \in (0, \varepsilon_0)}$ is uniformly bounded on $\overline \Omega$.
Fix a constant $C > 0$ so that
$|g^\varepsilon (x)| \leq C$ for all $(x, \varepsilon) \in \partial \Omega \times (0, \varepsilon_0)$.
We may assume as well that $|G(x, 0)| \leq C$ for all $x \in \overline \Omega$.

We intend to define $\tau^\varepsilon := \tau^\varepsilon (x) \in [0, \infty)$
and $Y^\varepsilon : [0, \tau^\varepsilon] \times \overline \Omega \to \mathbb R^2$.
Let $\varepsilon \in (0, \varepsilon_0)$ and $x \in \overline \Omega$.
If $x \in \partial \Omega$, then we take $\tau^\varepsilon = 0$ and set $Y^\varepsilon (0, x) = x$.
If $x \in \Omega \setminus \{ 0 \}$, then we solve the initial value problems
\begin{equation*}
\dot X^\pm (t) = \frac{b(X^\pm (t))}{\varepsilon} \pm \nu \, \frac{DH(X^\pm (t))}{|DH(X^\pm (t))|}
\ \ \ \text{ and } \ \ \ X^\pm (0) = x.
\end{equation*}
These problems have unique solutions $X^\pm (t)$ for $t \geq 0$
as far as $X^\pm (t)$ stay away from the origin.
Since $b(y) \cdot DH(y) = 0$ for all $y \in \overline \Omega$ and, hence,
\begin{equation} \label{growth-H}
\mathrm{\cfrac{d}{dt}} \, H(X^\pm (t)) = \pm \nu |DH(X^\pm (t))| \ \ \ \text{ for all } t > 0,
\end{equation}
we see that if $x \in (\Omega_i \cup c_i (0)) \setminus \{ 0 \}$ and $i \in \{ 1, 3 \}$,
then $X^- (t_x^-) \in \partial_i \Omega$ and $X^- (t) \in \Omega_i$
for all $t \in (0, t_x^-)$ and for some $t_x^- \in (0, \infty)$.
Similarly, if $x \in (\Omega_2 \cup c_2 (0)) \setminus \{ 0 \}$,
then $X^+ (t_x^+) \in \partial_2 \Omega$ and $X^+ (t) \in \Omega_2$
for all $t \in (0, t_x^+)$ and for some $t_x^+ \in (0, \infty)$.
In view of these observations and the fact that $c_2 (0) = c_1 (0) \cup c_3 (0)$, 
when $x \in \Omega_1 \cup \Omega_3$, we set $\tau^\varepsilon = t_x^-$
and $Y^\varepsilon (t, x) = X^- (t)$ for $t \in [0, \tau^\varepsilon]$,
and when $(\Omega_2 \cup c_2 (0)) \setminus \{ 0 \}$, we set $\tau^\varepsilon = t_x^+$
and $Y^\varepsilon (t, x) = X^+ (t)$ for $t \in [0, \tau^\varepsilon]$.

Now, we consider the case where $x = 0$
and set $\delta = \kappa \wedge (\nu \varepsilon/4)$,
where $\kappa$ is the constant from (H3).
By (H3), for any $y \in B_\delta$, we have $H(y) = y_2^2 -y_1^2$ 
and $|b(y)|/\varepsilon = 2|y|/\varepsilon < \nu/2$.
We set $t_0 = 2\delta/\nu$ and $X(t) = (\nu t/2)(0, 1) \in \mathbb R^2$ for $t \in [0, t_0]$
and note that $X(t_0) \in \Omega_2 \cap \partial B_\delta$
and that $X(t) \in \overline B_\delta$ and $|\dot X(t)| = \nu/2$ for all $t \in [0, t_0]$.
We set $\tau^\varepsilon = \tau^\varepsilon (0) = t_0 + \tau^\varepsilon (X(t_0))$ and 
\begin{align*}
Y^\varepsilon (t, 0) =
\begin{cases}
X(t) \ \ \                                 &\text{ for } t \in [0, t_0], \\
Y^\varepsilon (t - t_0, X(t_0)) \ \ \ &\text{ for } t \in [t_0, \tau^\varepsilon].
\end{cases}
\end{align*}
It is now easily seen that for any $x \in \overline \Omega$,
$Y^\varepsilon (\tau^\varepsilon, x) \in \partial \Omega$ and
$Y^\varepsilon (t, x) \in \overline \Omega$ for all $t \in [0, \tau^\varepsilon]$ and that 
\begin{equation*}
\dot Y^\varepsilon (t, x) - \frac{b(Y^\varepsilon (t, x))}{\varepsilon} \in \overline B_\nu \ \ \ 
\text{ for all } (t, x) \in (0, \tau^\varepsilon) \times \Omega.
\end{equation*}

Observe by the definition of $u^\varepsilon$ and
inequalities \eqref{lower-bound-L} and \eqref{upper-bound-L} that for any $x \in \overline \Omega$,
\begin{equation*}
u^\varepsilon (x) \geq \inf_{\tau \in [0, \infty)} \Big\{ \int_0^\tau -Ce^{-\lambda t} \, dt + (-C)e^{-\lambda \tau} \Big\}
                       = -((C/\lambda) \vee C),
\end{equation*}
and
\begin{equation*}
u^\varepsilon (x) \leq \int_0^{\tau^\varepsilon} L(Y^\varepsilon (t, x), \alpha (t, x))e^{-\lambda t} \, dt + Ce^{-\lambda \tau^\varepsilon}
                      \leq \int_0^{\tau^\varepsilon} Me^{-\lambda t} \, dt + Ce^{-\lambda \tau^\varepsilon} \leq (M/\lambda) \vee C,
\end{equation*}
where
\begin{align*}
\alpha (t, x) :=
\begin{cases}
\dot Y^\varepsilon (t, x) - \cfrac{b(Y^\varepsilon (t, x))}{\varepsilon} \ \ \ &\text{ if } x \in \Omega, \\
0                                                                                 \ \ \ &\text{ if } x \in \partial \Omega. 
\end{cases}
\end{align*}
Thus, we have
\begin{equation*}
|u^\varepsilon (x)| \leq C \vee (M/\lambda) \vee (C/\lambda) \ \ \ 
\text{ for all } (x, \varepsilon) \in \overline \Omega \times (0, \varepsilon_0),
\end{equation*}
which shows that $\{ u^\varepsilon \}_{\varepsilon \in (0, \varepsilon_0)}$ is uniformly bounded on $\overline \Omega$.

Now, we may define the upper (resp., lower) semicontinuous envelope $(u^\varepsilon)^\ast$ (resp., $(u^\varepsilon)_\ast$)
as a bounded function on $\overline \Omega$. 
As is well-known (see, for instance, \cites{I}), $(u^\varepsilon)^\ast$ and $(u^\varepsilon)_\ast$ are, respectively,
a viscosity subsolution and supersolution of 
\begin{equation} \label{pde*}
\lambda u - \cfrac{b \cdot Du}{\varepsilon} + G(x, Du) = 0 \ \ \ \text{ in } \Omega. 
\end{equation}

It remains to prove that $u^\varepsilon \in C(\overline \Omega)$.
We first demonstrate that 
\begin{equation} \label{bdry-c}
\lim_{\overline \Omega \ni y \to x} u^\varepsilon (y) = g^\varepsilon (x) \ \ \ \text{ for all } x \in \partial \Omega,
\end{equation}
where the convergence is uniform in $x \in \partial \Omega$.

To do this, we argue by contradiction
and thus suppose that there exist a sequence $\{ x_n \}_{n \in \mathbb N} \subset \overline \Omega$,
converging to $x_0 \in \partial_i \Omega$ for some $i \in \{ 1, 2, 3 \}$,
and a positive constant $\gamma > 0$ so that
\begin{equation*}
|u^\varepsilon (x_n) - g^\varepsilon (x_0)| \geq \gamma \ \ \ \text{ for all } n \in \mathbb N.
\end{equation*}
There are two cases: for infinitely many $n \in \mathbb N$, we have
\begin{equation} \label{infinite+}
u^\varepsilon (x_n) \geq g^\varepsilon (x_0) + \gamma,
\end{equation}
or, otherwise,
\begin{equation} \label{infinite-}
u^\varepsilon (x_n) \leq g^\varepsilon (x_0) - \gamma.
\end{equation}
By passing to a subsequence, we may assume that, in the first case (resp., in the second case),
\eqref{infinite+} (resp., \eqref{infinite-}) is satisfied for all $n \in \mathbb N$.
We may assume as well that $x_n \in \overline \Omega_i$ for all $n \in \mathbb N$.
The set $H \big(\overline B_\kappa \big) = \{ H(y) \, | \, y \in \overline B_\kappa \}$ is clearly a closed interval,
which we denote by $[h_-, h_+]$,
and, since $\overline B_\kappa \subset \Omega$,
we have $h_1 \vee h_3 < h_- < 0 < h_+ < h_2$.
We may assume by passing once again to a subsequence if necessary that for all $n \in \mathbb N$,
$H(x_n) \in (h_+, h_2]$ if $i = 2$, and $H(x_n) \in [h_i, h_-)$ otherwise.
By \eqref{growth-H}, we see that $Y^\varepsilon (t, x_n) \in \overline \Omega_i \setminus B_\kappa$
for all $t \in [0, \tau^\varepsilon (x_n)]$ and $n \in \mathbb N$.
We set $c_0 = \min_{\overline \Omega \setminus B_\kappa} |DH| (> 0)$.

We treat first the case where \eqref{infinite+} holds for all $n \in \mathbb N$. 
By \eqref{growth-H}, we have
\begin{align*}
(-1)^i h_i - (-1)^i H(x_n) &= (-1)^i H(Y^\varepsilon (\tau^\varepsilon (x_n), x_n)) - (-1)^i H(x_n) \\
                                &= \nu \int_0^{\tau^\varepsilon (x_n)} |DH(Y^\varepsilon (t, x_n))| \, dt \geq \nu c_0 \tau^\varepsilon (x_n),
\end{align*}
and, therefore,
\begin{equation*}
\lim_{n \to \infty} \tau^\varepsilon (x_n) = 0.
\end{equation*}
Setting $\alpha_n (t) = (-1)^i \nu DH(Y^\varepsilon (t, x_n))/|DH(Y^\varepsilon (t, x_n))|$
for $t \in [0, \tau^\varepsilon (x_n)]$, we get
\begin{align*}
u^\varepsilon (x_n) &\leq \int_0^{\tau^\varepsilon (x_n)} L(Y^\varepsilon (t, x_n), \alpha_n (t))e^{-\lambda t} \, dt
                                + g^\varepsilon (Y^\varepsilon (\tau^\varepsilon (x_n), x_n))e^{-\lambda \tau^\varepsilon (x_n)} \\
                         &\leq M \tau^\varepsilon (x_n) + g^\varepsilon (Y^\varepsilon (\tau^\varepsilon (x_n), x_n))e^{-\lambda \tau^\varepsilon (x_n)},       
\end{align*}
and, moreover,
\begin{equation*}
\limsup_{n \to \infty} u^\varepsilon (x_n) \leq g^\varepsilon (x_0),
\end{equation*}
which contradicts \eqref{infinite+}.

Next, we consider the case where \eqref{infinite-} holds for all $n \in \mathbb N$.
We choose $\alpha_n \in L^\infty ([0, \infty); \mathbb R^2)$ and $\tau_n \in [0, \infty)$ for each $n \in \mathbb N$
so that $X_n (t) := X^\varepsilon (t, x_n, \alpha_n) \in \overline \Omega$ for all $t \in [0, \tau_n]$,
$X_n (\tau_n) \in \partial \Omega$, and
\begin{equation} \label{opposite}
u^\varepsilon (x_n) + \frac{\gamma}{2} > \int_0^{\tau_n} L(X_n (t), \alpha_n (t))e^{-\lambda t} \, dt 
                                                     + g^\varepsilon (X_n (\tau_n))e^{-\lambda \tau_n}.
\end{equation}

We define $Z^\varepsilon (t, x)$ for $(t, x) \in [0, \infty) \times \overline \Omega_i \setminus B_\kappa$
as the unique solution of
\begin{equation*}
\dot X(t) = - \, \frac{b(X(t))}{\varepsilon} + (-1)^i \nu \, \frac{DH(X(t))}{|DH(X(t))|} \ \ \ \text{ and } \ \ \ X(0) = x.
\end{equation*}
Similarly to the case of $Y^\varepsilon$, we deduce that there exists $\sigma (x) \in [0, \infty)$ such that
\begin{equation*}
Z^\varepsilon (\sigma (x), x) \in \partial_i \Omega \ \ \ \text{ and } \ \ \
Z^\varepsilon (t, x) \in \overline \Omega_i \setminus B_\kappa \ \ \ \text{ for all } t \in [0, \sigma (x)].
\end{equation*}

We set $s_n = \sigma (x_n)$, $t_n = s_n + \tau_n$, and 
\begin{align*}
Y_n (t) = 
\begin{cases}
Z^\varepsilon (s_n - t, x_n) \ \ \ &\text{ for } t \in [0, s_n], \\
X_n (t - s_n)                   \ \ \ &\text{ for } t \in [s_n, t_n].
\end{cases}
\end{align*}
Note that $Y_n$ is continuous at $t = s_n$ and satisfies
\begin{align*}
\dot Y_n (t) =
\begin{cases}
\cfrac{b(Y_n (t))}{\varepsilon} - (-1)^i \nu \, \cfrac{DH(Y_n (t))}{|DH(Y_n (t))|} \ \ \ &\text{ for } t \in (0, s_n), \\
\cfrac{b(Y_n (t))}{\varepsilon} + \alpha_n (t - s_n) \ \ \ &\text{ for a.e. } t \in (s_n, t_n),
\end{cases}
\end{align*}
that $Y_n (t) \in \overline \Omega$ for all $t \in [0, t_n]$, and that
\begin{equation*}
\lim_{n \to \infty} s_n = 0 \ \ \ \text{ and } \ \ \ \lim_{n \to \infty} Y_n (0) = x_0.
\end{equation*}
Setting
\begin{align*}
\beta_n (t) =
\begin{cases}
- (-1)^i \nu \, \cfrac{DH(Y_n (t))}{|DH(Y_n (t))|} \ \ \ &\text{ for } t \in (0, s_n), \\
\alpha_n (t - s_n)                                      \ \ \ &\text{ for a.e. } t \in (s_n, t_n),
\end{cases}
\end{align*}
we have
\begin{equation*}
\dot Y_n (t) = \frac{b(Y_n (t))}{\varepsilon} + \beta_n (t) \ \ \ \text{ for a.e. } t \in (0, t_n).
\end{equation*}
Since $Y_n (0), Y_n (t_n) \in \partial \Omega$, we see by (G5) that
\begin{equation*}
g^\varepsilon (Y_n (0)) \leq \int_0^{t_n} L(Y_n (t), \beta_n (t))e^{-\lambda t} \, dt
                                     + g^\varepsilon (Y_n (t_n))e^{-\lambda t_n},
\end{equation*}
from which, together with \eqref{opposite} and \eqref{infinite-}, we get
\begin{align*}
g^\varepsilon (Y_n (0)) &\leq Ms_n + e^{-\lambda s_n} \Big( \int_0^{\tau_n} L(Y_n (s_n + t), \alpha_n (t))e^{-\lambda t} \, dt
                           + g^\varepsilon (Y_n (t_n))e^{-\lambda \tau_n} \Big) \\                             
                              &= Ms_n + e^{-\lambda s_n} \Big( \int_0^{\tau_n} L(X_n (t), \alpha_n (t))e^{-\lambda t} \, dt
                           + g^\varepsilon (X_n (\tau_n))e^{-\lambda \tau_n} \Big) \\
                              &< Ms_n + e^{- \lambda s_n} \left( u^\varepsilon (x_n) + \frac{\gamma}{2} \right)
                                \leq Ms_n + e^{-\lambda s_n} \left( g^\varepsilon (x_0) - \frac{\gamma}{2} \right).      
\end{align*}
Sending $n \to \infty$ yields
\begin{equation*}
g^\varepsilon (x_0) \leq g^\varepsilon (x_0) - \frac{\gamma}{2},
\end{equation*}
which is a contradiction.
Thus, we conclude that $u^\epsilon$ satisfies \eqref{bdry-c}.
In particular, we have $u^\varepsilon (x) = g^\varepsilon (x)$ for all 
$x \in \partial \Omega$.

To see the continuity of $u^\varepsilon$, we note that the pde \eqref{pde*} has the form
\begin{equation*}
\lambda u + F(x, Du) = 0 \ \ \ \text{ in } \Omega,
\end{equation*}
where $F$ is given by
\begin{equation*}
F(x, p) = - \, \frac{b(x) \cdot p}{\varepsilon} + G(x, p)
\end{equation*}
and satisfies 
\begin{equation*}
|F(x, p) - F(y, p)| \leq \frac{K}{\varepsilon} |x - y| |p| + m(|x - y|(|p| + 1)) \ \ \ 
\text{ for all } x, y \in \Omega \text{ and } p \in \mathbb R^2,
\end{equation*}
with $K > 0$ being a Lipschitz bound of $b$.
A standard comparison theorem, together with \eqref{bdry-c} and the viscosity properties of $u^\varepsilon$, ensures
that $(u^\varepsilon)^\ast \leq (u^\varepsilon)_\ast$ on $\overline \Omega$,
which implies the continuity of $u^\varepsilon$ on $\overline \Omega$.
\end{proof}

Henceforth, throughout this paper $u^\varepsilon$ denotes the function 
defined in Proposition \ref{viscosity-sol}.

\begin{prop} \label{prop: DPP}
Assume that {\rm (G1)--(G5)} hold.
Let $\varepsilon \in (0, \varepsilon_0)$, $t \geq 0$, and $x \in \overline \Omega$. Then
\begin{align*}
u^\varepsilon (x) = \inf \Big\{ \int_0^{t \wedge \tau^\varepsilon} L(X^\varepsilon (s, x, \alpha )&, \alpha (s)) e^{-\lambda s} \, ds
+ \mathbf{1}_{\{ t < \tau^\varepsilon \}} u^\varepsilon (X^\varepsilon (t, x, \alpha )) e^{-\lambda t} \\ 
&+ \mathbf{1}_{\{ t \geq \tau^\varepsilon \}} g^\varepsilon (X^\varepsilon (\tau^\varepsilon, x, \alpha )) e^{-\lambda \tau^\varepsilon} \ | \ \alpha \in L^\infty (\mathbb R ; \mathbb R^2 ) \Big\},
\end{align*}
where $\tau^\varepsilon$ is a visiting time in $\partial \Omega$
of the trajectory $\{ X^\varepsilon (t, x, \alpha) \}_{t \geq 0}$ constrained in $\overline \Omega$.
\end{prop}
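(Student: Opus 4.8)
The plan is to establish the identity by proving the two inequalities "$\le$" and "$\ge$" separately, using the representation formula \eqref{represent} from Proposition \ref{viscosity-sol} as the definition of $u^\varepsilon$, together with the semigroup (concatenation) structure of the controlled dynamics \eqref{state} and the additivity of the discounted cost along trajectories. Fix $\varepsilon \in (0,\varepsilon_0)$, $t \ge 0$, $x \in \overline\Omega$, and write $J(x,\alpha)$ for the cost functional appearing in \eqref{represent}, with $\tau^\varepsilon = \tau^\varepsilon(x,\alpha)$ the visiting time in $\partial\Omega$. Denote by $w(x)$ the right-hand side of the asserted formula.

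First I would prove $u^\varepsilon(x) \le w(x)$. Given an arbitrary control $\alpha \in L^\infty(\mathbb R;\mathbb R^2)$, consider its trajectory $X^\varepsilon(\cdot,x,\alpha)$ up to time $t \wedge \tau^\varepsilon$. On the event $\{t \ge \tau^\varepsilon\}$ the trajectory already reaches $\partial\Omega$ at $\tau^\varepsilon \le t$, so the term inside the infimum defining $w(x)$ equals $J(x,\alpha)$ exactly, hence is $\ge u^\varepsilon(x)$. On the event $\{t < \tau^\varepsilon\}$, set $y = X^\varepsilon(t,x,\alpha)$; by definition of $u^\varepsilon(y)$ as an infimum, for any $\eta > 0$ there is a control $\beta$ steering $y$ to $\partial\Omega$ with cost within $\eta$ of $u^\varepsilon(y)$. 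Concatenating $\alpha|_{[0,t]}$ with $\beta$ (shifted by $t$) yields an admissible control for $x$; by uniqueness of solutions to \eqref{state} the concatenated trajectory coincides with $X^\varepsilon(\cdot,x,\alpha)$ on $[0,t]$ and then follows $\beta$, and constrainedness in $\overline\Omega$ is preserved. Splitting the integral at $t$ and using $e^{-\lambda(t+s)} = e^{-\lambda t}e^{-\lambda s}$ gives $u^\varepsilon(x) \le \int_0^t L\,e^{-\lambda s}\,ds + e^{-\lambda t}(u^\varepsilon(y)+\eta)$. Taking the infimum over $\alpha$ and letting $\eta \to 0$ yields $u^\varepsilon(x) \le w(x)$.

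For the reverse inequality $w(x) \le u^\varepsilon(x)$, I would start from an almost-optimal control $\alpha$ for $u^\varepsilon(x)$, i.e. $J(x,\alpha) \le u^\varepsilon(x) + \eta$, with visiting time $\tau^\varepsilon$. Again split into the two cases. If $t \ge \tau^\varepsilon$, the cost $J(x,\alpha)$ is literally the term inside the infimum defining $w(x)$ for this $\alpha$, so $w(x) \le u^\varepsilon(x)+\eta$. If $t < \tau^\varepsilon$, write $y = X^\varepsilon(t,x,\alpha)$ and use that the tail control $\alpha(\cdot + t)$ steers $y$ to $\partial\Omega$ while staying in $\overline\Omega$, so by \eqref{represent} it is a competitor for $u^\varepsilon(y)$: $u^\varepsilon(y) \le \int_0^{\tau^\varepsilon - t} L(X^\varepsilon(s,y,\alpha(\cdot+t)),\alpha(t+s))e^{-\lambda s}\,ds + g^\varepsilon(\cdots)e^{-\lambda(\tau^\varepsilon - t)}$. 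Multiplying by $e^{-\lambda t}$ and recombining with $\int_0^t L e^{-\lambda s} ds$ shows the term inside the infimum for $w(x)$ at this $\alpha$ is $\le J(x,\alpha) \le u^\varepsilon(x)+\eta$; hence $w(x) \le u^\varepsilon(x)+\eta$. Letting $\eta \to 0$ finishes.

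The main obstacle is purely bookkeeping rather than conceptual: one must be careful that concatenated controls are genuinely admissible (elements of $L^\infty(\mathbb R;\mathbb R^2)$, which is immediate by gluing on disjoint time intervals and extending arbitrarily), that uniqueness for \eqref{state} guarantees the glued trajectory agrees piecewise with the pieces it was built from (so the constraint $X^\varepsilon(\cdot) \in \overline\Omega$ and the hitting of $\partial\Omega$ transfer correctly), and that the visiting times behave additively, i.e. if $t < \tau^\varepsilon(x,\alpha)$ then $\tau^\varepsilon(x,\alpha) - t$ is a visiting time for the shifted problem started at $y = X^\varepsilon(t,x,\alpha)$. One subtlety worth a remark is that $\tau^\varepsilon$ need not be unique (the trajectory may hit $\partial\Omega$ at several times); the argument only uses \emph{some} visiting time on each side, and the inequalities are compatible with any consistent choice, so the statement holds as written. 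No further analytic input (beyond Proposition \ref{viscosity-sol}) is needed.
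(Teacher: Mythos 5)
The paper does not prove Proposition \ref{prop: DPP} at all—it simply cites Lions's book and moves on—so there is no internal argument to compare against. Your proposal supplies the standard two-inequality derivation of the dynamic programming principle from the definition \eqref{represent}, and it is correct and complete as a plan: the key steps (additivity of the discounted cost across a splitting time, concatenation of controls for the upper bound on $u^\varepsilon$, restriction/shift of an almost-optimal control for the lower bound, uniqueness of ODE solutions to make the glued trajectory coincide piecewise, and the observation that the visiting time $\tau^\varepsilon$ is chosen jointly with $\alpha$ so the argument is insensitive to non-uniqueness of hitting times) are exactly what is needed, and each is handled properly. One minor point worth making explicit when writing this out in full is that in the ``$\le$'' direction, on the event $\{t<\tau^\varepsilon\}$ you let $\eta\to 0$ \emph{before} taking the infimum over $\alpha$, so that the bound
\[
u^\varepsilon(x)\ \le\ \int_0^{t} L(X^\varepsilon(s,x,\alpha),\alpha(s))e^{-\lambda s}\,ds\ +\ e^{-\lambda t}\,u^\varepsilon\bigl(X^\varepsilon(t,x,\alpha)\bigr)
\]
holds for each fixed admissible pair $(\alpha,\tau^\varepsilon)$; together with the trivial identity in the case $t\ge\tau^\varepsilon$ this gives $u^\varepsilon(x)\le w(x)$ cleanly. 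No analytic input beyond Proposition \ref{viscosity-sol} and the local Lipschitz regularity of $b$ (which gives uniqueness for \eqref{state}) is required, matching the paper's implicit stance that this is a textbook fact.
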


We refer to, for instance, \cites{L} for a proof of this proposition.
The identity in the proposition above is called the \emph{dynamic programming principle}.

We introduce the half relaxed-limits of $u^\varepsilon$ as $\varepsilon \to 0+$:
\begin{align*}
&v^+ (x) = \lim_{r \to 0+} \sup \{ u^\varepsilon (y) \ | \ y \in B_r(x) \cap \overline \Omega, \ \varepsilon \in (0, r) \}, \\
&v^- (x) = \lim_{r \to 0+} \inf \{ u^\varepsilon (y) \ | \ y \in B_r(x) \cap \overline \Omega, \ \varepsilon \in (0, r) \},
\end{align*}
which are well-defined and bounded on $\overline \Omega$
since the family $\{ u^\varepsilon \}_{\varepsilon \in (0, \varepsilon_0)}$
is uniformly bounded on $\overline \Omega$.

In addition to (G1)--(G5), we always assume the following (G6).

\begin{itemize}
\item[(G6)] There exist constants $d_i$, with $i \in \{ 1, 2, 3\}$, such that
				$v^\pm (x) = d_i$ for all $x \in \partial_i \Omega$ and $i \in \{ 1, 2, 3 \}$.
\end{itemize}
Obviously, this implies that
\begin{equation*} \label{g-d_i}
\lim_{\Omega \ni y \to x} v^\pm (y) = \lim_{\varepsilon \to 0+} g^\varepsilon (x) = d_i \ \ \
\text{ uniformly for } x \in \partial_i \Omega \text{ for all } i \in \{ 1, 2, 3 \}.
\end{equation*}

Assumption (G6) is rather implicit and looks restrictive, but it simplifies our arguments below,
since, in the limiting process of sending $\varepsilon \to 0+$,
any boundary layer does not occur. Our use of assumptions (G5) and (G6) 
is somewhat related to the fact that 
\eqref{epHJ} is not coercive when $\varepsilon$ is very small. 
However, for instance, in the case where $G(x, p) = |p| -  f(x)$ with $f \in C(\overline \Omega)$ and $f \geq 0$,
$g^\varepsilon \equiv 0$, and $d_i = 0$ for all $i \in \{ 1, 2, 3 \}$, assumptions (G1)--(G6) hold.
Our formulation of asymptotic analysis of \eqref{epHJ} is based on (G5) and (G6),
which may look a bit silly in the sense that it is not clear which $g^\varepsilon$ and $d_i$, with $i \in \{ 1, 2, 3 \}$, 
satisfy (G5) and (G6).  This question will be taken up in Section 7 and there 
we give a fairly general sufficient condition on the data $(d_1,d_2,d_3)$ 
for which conditions (G5) and (G6) hold.

\section{Main result}

\subsection{The limiting problem}

In this section,
we are concerned with the nonlinear ordinary differential equation 
\begin{equation} \label{lim-HJ}
\lambda u + \overline G_i (h, u') = 0 \ \ \ \text{ in } J_i \text{ and } i\in\{ 1, 2, 3 \}, \tag{$3.1_i$} 
\end{equation} 
where $u : J_i \to \mathbb R$ is the unknown and
$\overline G_i : \bar J_i \setminus \{ 0 \} \times \mathbb R \to \mathbb R$ is the function defined by
\begin{equation*}
\overline G_i (h, q) = \cfrac{1}{T_i (h)} \int_0^{T_i (h)} G \big( X(t, x), qDH(X(t, x)) \big) \, dt,
\end{equation*} 
where $x \in \overline \Omega$ is chosen so that $x \in c_i (h)$.
The value of the integral above is independent of the choice of $x \in c_i (h)$.
In Theorem \ref{main} in the next section, 
the limit of $u^\varepsilon$, as $\varepsilon \to 0+$,
is described by use of an ordered triple of viscosity solutions of \eqref{lim-HJ}, 
with $i \in \{ 1, 2, 3 \}$.

\setcounter{equation}{1}

We give here some lemmas concerning odes \eqref{lim-HJ}.

\begin{lem} \label{G-bar}
For any $i \in \{ 1, 2, 3 \}$, $\overline G_i \in C(\bar J_i \setminus \{ 0 \} \times \mathbb R)$,
and the function $\overline G_i$ is locally coercive in the sense that,
for any compact interval $I$ of $\bar J_i \setminus \{ 0 \}$,
\begin{equation*}
\lim_{r \to \infty} \inf \{ \overline G_i (h, q) \, | \, h \in I, |q| \geq r \} = \infty.
\end{equation*}
\end{lem}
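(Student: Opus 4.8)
The plan is to treat the two assertions separately: joint continuity of $\overline G_i$ follows from tracking the continuous dependence of the orbit, the period, and a base point on $c_i(h)$, while the local coercivity is an immediate consequence of the linear lower bound for $G$ coming from (G4).

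For continuity, I would fix a target $(h_0, q_0) \in (\bar J_i \setminus \{0\}) \times \mathbb R$ and argue locally. Since $h_0 \neq 0$, the loop $c_i(h_0)$ is a regular level set of $H$, so $DH$ does not vanish on it; flowing a fixed point of $c_i(h_0)$ along the normalized gradient field $DH/|DH|^2$ (whose flow raises $H$ at unit speed) produces, for $h$ in a neighbourhood $I_0 \subset \bar J_i \setminus \{0\}$ of $h_0$, a $C^1$ curve $h \mapsto x_i(h)$ with $x_i(h) \in c_i(h)$. Because the integral defining $\overline G_i(h,q)$ does not depend on which point of $c_i(h)$ is chosen, I would then normalize the period by the change of variables $t = T_i(h)s$ and write, for $h \in I_0$,
\begin{equation*}
\overline G_i(h,q) = \int_0^1 G\big( X(T_i(h)s, x_i(h)),\, q\,DH(X(T_i(h)s, x_i(h))) \big) \, ds .
\end{equation*}
The integrand is jointly continuous in $(s,h,q)$ on $[0,1] \times I_0 \times \mathbb R$ — this uses $X, \dot X \in C^1$, $DH \in C^1$, continuity of $T_i$ from Lemma \ref{T_i}, continuity of $x_i$, and (G1) — so restricting to a compact neighbourhood of $(h_0,q_0)$ and invoking uniform continuity of the integrand there gives continuity of $\overline G_i$ at $(h_0, q_0)$. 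As $(h_0,q_0)$ is arbitrary, $\overline G_i \in C(\bar J_i \setminus \{0\} \times \mathbb R)$.

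For the local coercivity, let $I \subset \bar J_i \setminus \{0\}$ be a compact interval. The set $K_I := \{ x \in \overline \Omega_i : H(x) \in I \} = \bigcup_{h \in I} c_i(h)$ is closed and bounded, hence compact, and it does not contain the origin since $H(0) = 0 \notin I$; therefore $m_I := \min_{K_I} |DH| > 0$. Using the bound $G(x,p) \geq \nu|p| - M$ on $\overline\Omega \times \mathbb R^2$ established above from (G1), (G3), (G4), I would estimate, for $h \in I$, $q \in \mathbb R$ and $x \in c_i(h)$, noting $X(t,x) \in c_i(h) \subset K_I$,
\begin{align*}
\overline G_i(h,q) &= \frac{1}{T_i(h)} \int_0^{T_i(h)} G\big( X(t,x),\, q\,DH(X(t,x)) \big)\, dt \\
&\geq \frac{1}{T_i(h)} \int_0^{T_i(h)} \big( \nu|q|\,|DH(X(t,x))| - M \big)\, dt \geq \nu m_I |q| - M .
\end{align*}
Hence $\inf\{ \overline G_i(h,q) : h \in I,\ |q| \geq r \} \geq \nu m_I r - M \to \infty$ as $r \to \infty$, which is exactly the asserted local coercivity.

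The only genuinely delicate point is the bookkeeping in the first part: arranging the (locally $C^1$) selection $h \mapsto x_i(h) \in c_i(h)$ and then verifying the joint continuity of the period-normalized integrand. This is routine once one observes that on $\bar J_i \setminus \{0\}$ the loops $c_i(h)$ are regular level curves of the $C^2$ function $H$, so the implicit function theorem (or the gradient flow) supplies the selection. The coercivity half is essentially one line, given the linear minorant for $G$ and the positive lower bound for $|DH|$ on $K_I$.
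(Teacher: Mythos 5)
Your proof is correct and follows essentially the same route as the paper: the coercivity half is the paper's argument verbatim (apply the linear minorant $G(x,p)\ge\nu|p|-M$ from (G1), (G3), (G4) with $p=q\,DH(X(t,x))$, then use $\min|DH|>0$ on the compact union of loops), while for the continuity half the paper simply asserts it ``follows from the definition of $\overline G_i$ and Lemma \ref{T_i}'' and you supply the routine details (a local $C^1$ selection $h\mapsto x_i(h)\in c_i(h)$ via the normalized gradient flow, the change of variable $t=T_i(h)s$, and uniform continuity of the resulting integrand on a compact set), which is exactly the argument the paper has in mind.
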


\begin{proof}
The continuity of $\overline G_i$ follows from the definition of $\overline G_i$ and Lemma \ref{T_i}.

Fix any $i \in \{ 1, 2, 3 \}$ and $h_0 \in J_i$.
Set $I = [h_0, h_2]$ if $i = 2$ and, otherwise, $I = [h_i, h_0]$.
We choose $c_0 > 0$ so that
\begin{equation*}
|DH(x)| \geq c_0 \ \ \ \text{ for all } x \in \bigcup_{r \in I} c_i (r).
\end{equation*}
Let $h \in I$ and choose $x \in c_i (h)$.
By \eqref{lower-bound-L}, we get
\begin{align} \label{loc-coer}
\begin{aligned}
\overline G_i (h, q) &\geq \frac{1}{T_i (h)} \int_0^{T_i (h)} \Big( \nu |q| |DH(X(t, x))| - M \Big) \, dt \\
                         &\geq \nu c_0 |q| - M.
\end{aligned}
\end{align}
This shows the local coercivity of $\overline G_i$.
\end{proof}

For $i \in \{1, 2, 3 \}$, let $\mathcal S_i$ (resp., $\mathcal S_i^-$ or $\mathcal S_i^+$) be the set of
all viscosity solutions (resp., viscosity subsolutions or viscosity supersolutions ) of \eqref{lim-HJ}.

\begin{lem} \label{con-ext}
Let $i \in \{ 1, 2, 3 \}$ and $u \in \mathcal S_i^-$.
Then $u$ is uniformly continuous in $J_i$ and, hence,
it can be extended uniquely to $\bar J_i$ as a continuous function on $\bar J_i$.
\end{lem}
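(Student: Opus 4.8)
\emph{Plan.} The goal is to show that $u$ is uniformly continuous on $J_i$; the continuous extension to $\bar J_i$ is then immediate and unique by density of $J_i$ in $\bar J_i$. The argument rests on the local coercivity of $\overline G_i$ from Lemma~\ref{G-bar}, which must be made quantitative near the junction point $h=0$, where the coercivity degenerates because $T_i(h)\to\infty$. To begin, by this local coercivity and standard viscosity solution theory, $u$ is locally Lipschitz in $J_i$; in particular it is continuous, differentiable a.e., and satisfies $\lambda u(h)+\overline G_i(h,u'(h))\le 0$ for a.e.\ $h\in J_i$. Using $G(y,p)\ge\nu|p|-M$ together with the definition of $\overline G_i$ and \eqref{def-L_i}, one has
\[
\overline G_i(h,q)\ \ge\ \nu\,\frac{L_i(h)}{T_i(h)}\,|q|-M\ \ge\ -M\qquad\text{for }h\in\bar J_i\setminus\{0\},\ q\in\mathbb R .
\]
From $\overline G_i\ge-M$ and the a.e.\ inequality we get $\lambda u\le M$ a.e., hence $u\le M/\lambda$ on $J_i$ by continuity.

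Next I run a Gronwall-type estimate. Set $\phi:=M/\lambda+1-u\ge 1$. Inserting the first lower bound above into the a.e.\ inequality gives, for a.e.\ $h\in J_i$,
\[
|u'(h)|\ \le\ \frac{T_i(h)}{\nu L_i(h)}\bigl(M-\lambda u(h)\bigr)\ \le\ \Lambda(h)\,\phi(h),\qquad \Lambda(h):=\frac{\lambda}{\nu}\,\frac{T_i(h)}{L_i(h)},
\]
so $\bigl|(\log\phi)'(h)\bigr|\le\Lambda(h)$ a.e. The key point is that $\Lambda\in L^1(J_i)$: near $h=0$ one has $\Lambda(h)=O(|\log|h||)$, since $T_i(h)=O(|\log|h||)$ by Lemma~\ref{T_i} while $L_i$ stays bounded below by a positive constant there (Lemma~\ref{L_i}, the loops $c_i(h)$ not degenerating to a point as $h\to0$); near the other endpoint of $\bar J_i$ the functions $T_i$ and $1/L_i$ are continuous, hence bounded; and on compact subintervals of $J_i$, $\Lambda$ is continuous. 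Integrating $|(\log\phi)'|\le\Lambda$ yields $|\log\phi(h)-\log\phi(h')|\le\|\Lambda\|_{L^1(J_i)}$ for all $h,h'\in J_i$, so $\phi$, and therefore $u$, is bounded on $J_i$, say $\|u\|_{L^\infty(J_i)}\le C_0$.

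Feeding this bound back, the inequality above becomes $|u'(h)|\le(M/\lambda+C_0)\,\Lambda(h)=:\Psi(h)$ a.e., with $\Psi\in L^1(J_i)$, whence
\[
|u(h)-u(h')|\ \le\ \int_{h\wedge h'}^{h\vee h'}\Psi(t)\,dt\qquad(h,h'\in J_i),
\]
and the right-hand side tends to $0$ as $|h-h'|\to 0$, uniformly, by absolute continuity of the integral of the $L^1$ function $\Psi$. Thus $u$ is uniformly continuous on $J_i$ and extends uniquely to a continuous function on $\bar J_i$.

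\emph{Main obstacle.} Everything away from $h=0$ is routine coercive Hamilton--Jacobi regularity; the genuine difficulty is the junction point $h=0$, where the coefficient $L_i(h)/T_i(h)$ measuring the coercivity of $\overline G_i$ vanishes (because $T_i$ blows up), so the plain ``coercive $\Rightarrow$ Lipschitz'' estimate gives nothing uniform. The resolution is that this blow-up is only \emph{logarithmic} (Lemma~\ref{T_i}), hence integrable, which is exactly what makes both the a priori bound on $u$ and its modulus of continuity survive the passage to $h=0$. A secondary point is that the subsolution property by itself controls $u$ only from above, so the lower bound on $u$ has to be recovered from the same Gronwall estimate on $\log\phi$.
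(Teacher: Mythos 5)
Your proof is correct and follows essentially the same route as the paper's: local coercivity for local Lipschitz continuity, the lower bound $\overline G_i(h,q)\ge \nu L_i(h)|q|/T_i(h)-M$ giving an a.e.\ differential inequality, integrability of $T_i/L_i$ on $J_i$ (via $T_i(h)=O(|\log|h||)$ and $L_i$ bounded below), a Gronwall step for boundedness, and then feeding the bound back to get an $L^1$ majorant for $|u'|$ and hence uniform continuity. The only cosmetic difference is the choice of auxiliary function in the Gronwall step: the paper works with $v=\lambda u-M\le 0$ directly, while you shift to $\phi=M/\lambda+1-u\ge 1$, which cleanly sidesteps the (benign) issue of $v$ possibly vanishing.
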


\begin{proof}
For any compact interval $I$ of $J_i$,
noting that $u$ is upper semicontinuous in $J_i$ and $\overline G_i$ is locally coercive,
we find that $|u'| \leq C_1 (u, I)$ in the viscosity sense
for some constant $C_1 (u, I) > 0$ depending on $u$ and $I$, 
which shows that $u$ is locally Lipschitz continuous in $J_i$.

Let $h \in J_i$ and fix $x \in c_i (h)$.
By \eqref{loc-coer}, we have
\begin{equation*}
\overline G_i (h, q) \geq \frac{ \nu L_i (h)}{T_i (h)} |q| - M.
\end{equation*}
Hence, we get
\begin{equation} \label{uc}
\lambda u + \frac{\nu L_i}{T_i} |u'| - M \leq 0 \ \ \ \text{ in } J_i
\end{equation}
in the viscosity sense and hence in the almost everywhere sense.

We define $v \in C(J_i)$ by $v(h) = \lambda u(h) - M$ and observe that
\begin{equation*}
|v' (h)| + \frac{\lambda T_i (h)}{\nu L_i (h)} v(h) \leq 0 \ \ \ \text{ for a.e. } h \in J_i.
\end{equation*}
It is obvious that the length $L_i (h)$ of $c_i (h)$ is bounded from below by a positive constant,
while Lemma \ref{T_i} assures that $T_i \in L^1 (J_i)$.
Consequently, we find that $T_i/L_i \in L^1 (J_i)$.
Gronwall's inequality yields, for any $h, a \in J_i$,
\begin{equation} \label{bound-v}
|v(h)| \leq |v(a)| \exp \int_{J_i} \frac{\lambda T_i (s)}{\nu L_i (s)} \, ds,
\end{equation}
which shows that $u$ is a bounded function in $J_i$.
From \eqref{uc}, we get
\begin{equation} \label{abs-con}
|u' (h)| \leq \frac{T_i (h)}{\nu L_i (h)} \left( M + \lambda \sup_{J_i} |u| \right) \ \ \ \text{ for a.e. } h \in J_i.
\end{equation}
Since $T_i/L_i \in L^1 (J_i)$, the inequality above shows that $u$ is uniformly continuous in $J_i$.
\end{proof}

Thanks to the lemma above, we may assume
any $u \in \mathcal S_i^-$, with $i \in \{ 1, 2, 3 \}$, as a function in $C(\bar J_i)$.
To make this explicit notationally, we write $\mathcal S_i^- \cap C(\bar J_i)$ for $\mathcal S_i^-$.
This comment also applies to $\mathcal S_i$ since $\mathcal S_i \subset \mathcal S_i^-$.

The following lemma is a direct consequence of \eqref{abs-con}.

\begin{lem} \label{equi-con}
Let $i \in \{ 1, 2, 3 \}$ and $\mathcal S \subset \mathcal S_i^-$.
Assume that $\mathcal S$ is uniformly bounded on $\bar J_i$.
Then $\mathcal S$ is equi-continuous on $\bar J_i$.
\end{lem}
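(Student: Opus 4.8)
\textbf{Proof proposal for Lemma \ref{equi-con}.}

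The plan is to reduce equi-continuity to a single uniform integrable bound on the derivatives. By Lemma \ref{con-ext}, every $u \in \mathcal S \subset \mathcal S_i^-$ extends continuously to $\bar J_i$, and inequality \eqref{abs-con} holds for each such $u$:
\begin{equation*}
|u'(h)| \leq \frac{T_i(h)}{\nu L_i(h)} \Bigl( M + \lambda \sup_{J_i} |u| \Bigr) \quad \text{for a.e. } h \in J_i.
\end{equation*}
First I would use the hypothesis that $\mathcal S$ is uniformly bounded on $\bar J_i$ to fix a constant $R > 0$ with $\sup_{J_i} |u| \leq R$ for all $u \in \mathcal S$. Then \eqref{abs-con} gives the \emph{uniform} pointwise bound $|u'(h)| \leq \Phi(h) := \frac{T_i(h)}{\nu L_i(h)}(M + \lambda R)$ for a.e. $h \in J_i$, valid simultaneously for every $u \in \mathcal S$.

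Next I would invoke the integrability facts established in the proof of Lemma \ref{con-ext}: $L_i$ is bounded below by a positive constant on $J_i$ (since the loops $c_i(h)$ cannot shrink to a point away from the critical level), and $T_i \in L^1(J_i)$ by Lemma \ref{T_i} (the logarithmic blow-up $T_i(h) = O(|\log|h||)$ is integrable near the endpoint $0$, and $T_i$ is continuous elsewhere on $\bar J_i$). Hence $\Phi \in L^1(J_i)$. Since each $u \in \mathcal S$ is absolutely continuous on $\bar J_i$ (being uniformly continuous with derivative bounded by the integrable function $\Phi$), for any $h_1, h_2 \in \bar J_i$ with $h_1 < h_2$ we have
\begin{equation*}
|u(h_1) - u(h_2)| = \Bigl| \int_{h_1}^{h_2} u'(s)\, ds \Bigr| \leq \int_{h_1}^{h_2} \Phi(s)\, ds .
\end{equation*}
Defining $\omega(\delta) := \sup \bigl\{ \int_E \Phi(s)\, ds \ \big| \ E \subset J_i \text{ measurable}, \ |E| \leq \delta \bigr\}$, absolute continuity of the Lebesgue integral of $\Phi$ gives $\omega(\delta) \to 0$ as $\delta \to 0+$, and $|u(h_1) - u(h_2)| \leq \omega(|h_1 - h_2|)$ for all $u \in \mathcal S$. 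This modulus is independent of $u$, so $\mathcal S$ is equi-continuous on $\bar J_i$.

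I do not anticipate a serious obstacle here, since the statement is explicitly advertised as "a direct consequence of \eqref{abs-con}"; the only point requiring a little care is ensuring that the constant multiplying $T_i/L_i$ in \eqref{abs-con} can be chosen uniformly over $\mathcal S$, which is exactly where the uniform-boundedness hypothesis is used, and that the resulting dominating function is genuinely in $L^1(J_i)$ up to the singular endpoint $h = 0$ — both already recorded in the proof of Lemma \ref{con-ext}.
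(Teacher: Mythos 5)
Your proof is correct and follows exactly the route the paper intends: the paper states Lemma \ref{equi-con} is "a direct consequence of \eqref{abs-con}," and you correctly make the uniform-boundedness hypothesis do the work of choosing a single constant $R$ so that \eqref{abs-con} gives a $u$-independent $L^1$ majorant $\Phi = (T_i/\nu L_i)(M+\lambda R)$, from which absolute continuity of the integral yields a common modulus.
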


\begin{lem} \label{bound-by-bc}
Let $i \in \{ 1, 2, 3 \}$ and $u \in \mathcal S_i^- \cap C(\bar J_i)$.
Then there exists a constant $C > 0$, independent of $u$, such that
\begin{equation*}
|u(h)| \leq C(|u(a)| + 1) \ \ \ \text{ for all } h, a \in \bar J_i.
\end{equation*} 
\end{lem}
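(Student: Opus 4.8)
The plan is to reuse, essentially verbatim, the a priori bounds obtained in the proof of Lemma \ref{con-ext}, observing that the exponential factor appearing there depends only on the fixed data and not on the particular subsolution $u$; the rest is elementary algebra together with a continuity argument to pass from $J_i$ to $\bar J_i$.

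First I would recall that, exactly as in the derivation of \eqref{uc}, any $u \in \mathcal S_i^-$ satisfies, in the viscosity sense and hence almost everywhere,
\begin{equation*}
\lambda u + \frac{\nu L_i}{T_i}|u'| - M \leq 0 \quad \text{in } J_i,
\end{equation*}
so that, setting $v(h) := \lambda u(h) - M$ as in the proof of Lemma \ref{con-ext}, one has $|v'(h)| + \frac{\lambda T_i(h)}{\nu L_i(h)}\,v(h) \leq 0$ for a.e. $h \in J_i$. Since $L_i$ is bounded below by a positive constant and $T_i \in L^1(J_i)$ by Lemma \ref{T_i}, the constant
\begin{equation*}
K := \int_{J_i} \frac{\lambda T_i(s)}{\nu L_i(s)}\,ds
\end{equation*}
is finite, and, just as in \eqref{bound-v}, Gronwall's inequality gives $|v(h)| \leq |v(a)|\,e^{K}$ for all $h, a \in J_i$. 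The key point for the present statement is that $K$ (and hence $e^K$) involves only $\lambda$, $\nu$, and the functions $T_i$, $L_i$, so it is independent of $u$.

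It then remains to unwind the definition of $v$. From $|v(h)| \leq |v(a)|\,e^{K}$ we obtain
\begin{equation*}
\lambda|u(h)| \leq M + |v(h)| \leq M + e^{K}|v(a)| \leq M + e^{K}\big(\lambda|u(a)| + M\big),
\end{equation*}
whence $|u(h)| \leq e^{K}|u(a)| + \frac{M(1 + e^{K})}{\lambda}$ for all $h, a \in J_i$. Taking $C := e^{K} \vee \frac{M(1 + e^{K})}{\lambda}$, which depends only on $\lambda$, $\nu$, $M$, $T_i$, $L_i$, we get $|u(h)| \leq C(|u(a)| + 1)$ for all $h, a \in J_i$; since $u \in C(\bar J_i)$, this inequality extends to all $h, a \in \bar J_i$ by continuity.

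I do not expect any genuine obstacle here: the only subtlety is to make explicit the $u$-independence of the constant, which is immediate once one notices that the Gronwall exponent in \eqref{bound-v} is built purely from the fixed data of the problem.
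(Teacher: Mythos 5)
Your proof is correct and follows essentially the same route as the paper's: both reuse the Gronwall estimate \eqref{bound-v} from the proof of Lemma \ref{con-ext} (your $e^K$ is the paper's $C_1$), unwind $v = \lambda u - M$, and extend to $\bar J_i$ by continuity. The only difference is cosmetic bookkeeping in the final constant, and you are right to emphasize that the Gronwall exponent depends only on $\lambda$, $\nu$, $T_i$, $L_i$ and not on $u$.
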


\begin{proof}
Set
\begin{equation*}
C_1 = \exp \int_{J_i} \frac{\lambda T_i (s)}{\nu L_i (s)} \, ds,
\end{equation*}
and fix $h, a \in \bar J_i$.
According to \eqref{bound-v}, we have
\begin{equation*}
|\lambda u(h) - M| \leq C_1 |\lambda u(a) -M|,
\end{equation*}
and, hence,
\begin{equation*}
|u(h)| \leq C_1 |u(a)| + 2 \lambda^{-1} C_1 M. \qedhere
\end{equation*}
\end{proof}

\subsection{Main result}

The main result is stated as follows.
Recall that, throughout this paper, (H1)--(H3) and (G1)--(G6) are satisfied
and $u^\varepsilon$ is the unique solution of \eqref{epHJ}.

\begin{thm} \label{main}
There exist functions $u_i \in \mathcal S_i \cap C(\bar J_i)$, with $i \in \{ 1, 2, 3 \}$,
such that $u_1 (0) = u_2 (0) = u_3 (0)$,
\begin{equation*}
u_i (h_i) = d_i \ \ \ \text{ for all } i \in \{ 1, 2, 3 \},
\end{equation*}
and, as $\varepsilon \to 0+$,
\begin{equation*}
u^\varepsilon \to u_i \circ H \ \ \ \text{ uniformly on } \overline \Omega_i \text{ for all } i \in \{ 1, 2, 3 \}.
\end{equation*}
That is, if we define $u_0 \in C(\overline \Omega)$ by
\begin{align*}
u_0 (x) = 
\begin{cases}
u_1 \circ H (x) \ \ \ \text{ if } x \in \overline \Omega_1, \\
u_2 \circ H (x) \ \ \ \text{ if } x \in \overline \Omega_2, \\
u_3 \circ H (x) \ \ \ \text{ if } x \in \overline \Omega_3, 
\end{cases}
\end{align*}
then, as $\varepsilon \to 0+$,
\begin{equation*}
u^\varepsilon \to u_0 \ \ \ \text{ uniformly on } \overline \Omega.
\end{equation*}
\end{thm}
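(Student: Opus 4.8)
The plan is to run the half-relaxed limit method with the functions $v^\pm$ introduced above. Since $v^-\le v^+$ on $\overline\Omega$ by definition, $v^+$ is upper semicontinuous and $v^-$ is lower semicontinuous, it suffices to prove the reverse inequality $v^+\le v^-$ on $\overline\Omega$: then $v^+=v^-$ is a single function $u_0\in C(\overline\Omega)$, and the standard half-relaxed limit argument upgrades this to $u^\varepsilon\to u_0$ uniformly on the compact set $\overline\Omega$. So the proof splits into (i) identifying the equations satisfied by $v^\pm$, (ii) proving $v^+\le v^-$, and (iii) reading off the functions $u_i$ on $\bar J_i$.

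For (i), one first checks that $v^\pm$ are constant along each loop $c_i(h)$, $h\in\bar J_i$, hence factor through the height as $v^\pm=\overline v_i^\pm\circ H$ on $\overline\Omega_i$. This makes precise the heuristic of the introduction: by the representation \eqref{represent} and the dynamic programming principle (Proposition \ref{prop: DPP}), the fast drift $b/\varepsilon$ carries the state once around $c_i(h)$ in time $O(\varepsilon T_i(h))\to0$ at running cost $O(\varepsilon T_i(h))$, so the values of $u^\varepsilon$ at two points of the same loop differ by $o(1)$. Next, the perturbed test function method — lifting a test function $\phi$ on $J_i$ to $\phi\circ H$ and correcting it by averaging along the loop, the averaging producing exactly $\overline G_i$ — shows that $\overline v_i^+$ is a viscosity subsolution and $\overline v_i^-$ a viscosity supersolution of \eqref{lim-HJ} on $J_i$; by Lemma \ref{con-ext} both extend continuously to $\bar J_i$, and by (G6), $\overline v_i^\pm(h_i)=d_i$. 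This is the characterization (Theorem \ref{characterize}); its hard part is the behaviour near the junction $h=0$, where $T_i(h)=O(|\log|h||)\to\infty$ (Lemma \ref{T_i}) so the flow slows down on the separatrix $\{H=0\}$ and optimal trajectories may switch freely among the three branches there, from which one must extract the correct Kirchhoff-type sub/super condition at the vertex.

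For (ii), let $u_i^+$ be the maximal subsolution of \eqref{lim-HJ} on $J_i$ subject to $u_i^+(h_i)\le d_i$ — equivalently the value function of the state-constrained control problem on $\bar J_i$ with exit cost $d_i$ at $h_i$, a solution on $J_i$ which exists by Perron's method together with Lemmas \ref{G-bar}, \ref{con-ext} and \ref{bound-by-bc} — and set $d_0:=\min_{i\in\{1,2,3\}}u_i^+(0)$. The upper estimate $v^+\le u_i^+\circ H$ on $\overline\Omega_i$, for each $i$ (Lemma \ref{v^+-u_i^+}), gives $v^+(0)\le d_0$; the matching scalar bound at the vertex, $v^-(0)\ge d_0$ (Lemma \ref{v^--d_0}), together with $v^-(0)\le v^+(0)$, forces $v^+(0)=v^-(0)=d_0$. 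Now fix $i$: $\overline v_i^+$ is a subsolution and $\overline v_i^-$ a supersolution of \eqref{lim-HJ} on $J_i$, both continuous on $\bar J_i$, with common endpoint values $d_0$ at $0$ and $d_i$ at $h_i$, so the comparison principle for \eqref{lim-HJ} on $J_i$ yields $\overline v_i^+\le\overline v_i^-$, i.e.\ $v^+\le v^-$ on $\overline\Omega_i$. Since $\overline\Omega=\bigcup_{i}\overline\Omega_i$, this gives $v^+\le v^-$ on $\overline\Omega$.

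For (iii), put $u_i:=\overline v_i^+=\overline v_i^-$; being both a sub- and a supersolution, $u_i\in\mathcal S_i\cap C(\bar J_i)$, and by construction $u_i(h_i)=d_i$ and $u_1(0)=u_2(0)=u_3(0)=d_0$. Finally $u^\varepsilon\to u_0$ uniformly on $\overline\Omega$, where $u_0=v^+=v^-$ equals $u_i\circ H$ on $\overline\Omega_i$; since the three pieces $u_i\circ H$ agree on the overlaps $c_i(0)$ (each is $d_0$ there), $u_0$ is exactly the function displayed in the statement, which completes the proof. \emph{The principal obstacle} is the analysis at the vertex $0$ — establishing the sharp sub/super junction conditions for $v^\pm$ and the two one-sided estimates of step (ii) — where the degeneracy of the separatrix (the logarithmic blow-up of $T_i$ and the loss of controllability as $\varepsilon\to0+$, partly compensated by the coercivity of $G$ in the direction $DH$) must be handled by carefully built correctors.
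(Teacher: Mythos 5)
Your overall skeleton matches the paper's: factor $v^\pm$ through $H$ via the monotonicity along the Hamiltonian flow, show the traces are respectively a sub- and a super-solution of the limit ODE (Theorem~\ref{characterize}), pin down the common value at the junction via two one-sided estimates, and finish with the branch-wise comparison principle. Where you go astray is in step~(ii), and the error is substantive.

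In the paper, $u_i^+$ and $u_i^-$ are \emph{defined} as the traces of $v^+$ and $v^-$ on the loops $c_i(h)$, $h\in J_i$, extended continuously to $\bar J_i$ via Lemma~\ref{con-ext}; the constant $d_0=\min_i u_i^+(0)$ is then built from these traces. You instead redefine $u_i^+$ as the maximal subsolution $\rho_i^{d_i}$ of \eqref{lim-HJ} with $u_i^+(h_i)\le d_i$ and set $d_0=\min_i\rho_i^{d_i}(0)$. These are a priori different objects: since $\overline v_i^+\in\mathcal S_i^-$ with $\overline v_i^+(h_i)=d_i$, maximality gives $\overline v_i^+\le\rho_i^{d_i}$, so your $d_0$ dominates the paper's $d_0=\min_i\overline v_i^+(0)$, and there is no reason for equality: the ODE degenerates at $h=0$, the boundary value at $0$ is not determined by the exterior value $d_i$ alone (see Theorem~\ref{exist}, where $d_0$ ranges over an interval for fixed $(d_1,d_2,d_3)$), and $\rho_i^{d_i}$ is not coupled to the other branches. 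Consequently, you cannot quote Lemma~\ref{v^--d_0} to get $v^-(x)\ge d_0$ on $c_2(0)$: that lemma is proved for the paper's $d_0$ and would be a strictly stronger (potentially false) assertion for yours. Symmetrically, you misstate Lemma~\ref{v^+-u_i^+}: the lemma is \emph{not} the comparison ``$v^+\le u_i^+\circ H$ on $\overline\Omega_i$'', which for your $u_i^+$ is trivially true but carries no information at the junction. It is the junction bound $v^+(x)\le\min_i u_i^+(0)$ for $x\in c_2(0)$, i.e. the upper half-relaxed limit on the separatrix is dominated by the smallest of the branch limits; this is precisely the delicate part, proved with perturbed test functions and explicit trajectory estimates near the homoclinic orbit, and it is exactly this bound (paired with the upper semicontinuity inequality $u_i^+(0)\le v^+(x)$) that forces all three branch values $u_i^+(0)$ to coincide. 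Once $u_i^+$ is taken to be the trace of $v^+$, the rest of your argument (semicontinuity squeeze, comparison on each $J_i$, reassembly of $u_0$) is correct and agrees with the paper.
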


Before giving the proof,
we note that the stability of viscosity solutions yields
\begin{equation*} 
-b \cdot Dv^+ \leq 0 \ \ \ \text{ and } \ \ \ -b \cdot Dv^- \geq 0 \ \ \ \text{ in } \Omega,
\end{equation*}
in the viscosity sense.
These show that $v^+$ and $v^-$ are nondecreasing and nonincreasing
along the flow $\{ X(t, x) \}_{t \in \mathbb R}$, respectively.

Fix $i \in \{ 1, 2, 3 \}$ and $x \in \Omega_i$ and set $h = H(x)$.
The monotonicity of $v^+$ along the flow $\{ X(t, x) \}_{t \in \mathbb R}$ yields, for all $t \in [0, T_i (h)]$,
\begin{equation*}
v^+ (x) = v^+ \big( X(T_i (h), x) \big) \geq v^+ (X(t, x)) \geq v^+ (X(0, x)) = v^+ (x).
\end{equation*}
Hence $v^+$ is constant on the loop $c_i (h)$.
Similarly we see that $v^-$ is also constant on the loop $c_i (h)$.

Thus, for any $h \in J_i$ and $i \in \{ 1, 2, 3 \}$,
the image $v^+ (c_i (h)) := \{ v^+ (x) \, | \, x \in c_i (h) \}$ of $c_i (h)$ by $v^+$
(resp., $v^- (c_i (h)) := \{ v^- (x) \, | \, x \in c_i (h) \}$ of $c_i (h)$ by $v^-$)
consists of a single element. This ensures that the relation
\begin{equation} \label{u_i^pm}
u_i^+ (h) \in v^+ (c_i (h)) \ \ \ \text{ (resp., } u_i^- (h) \in v^- (c_i (h)))
\end{equation}
defines a function $u_i^+$ in $J_i$ (resp., $u_i^-$ in $J_i$).
It is easily seen that $u_i^+$ and $u_i^-$ are, respectively,
upper and lower semicontinuous in $J_i$.

For the proof of Theorem \ref{main}, we need the following three propositions.

\begin{thm} \label{characterize}
For all $i \in \{ 1, 2, 3 \}$, $u_i^+ \in \mathcal S_i^-$ and $u_i^- \in \mathcal S_i^+$.
\end{thm}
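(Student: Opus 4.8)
The plan is to show that the half-relaxed limit $v^+$ is, in an averaged sense, a subsolution of the PDE $\lambda v + G(x,Dv) = 0$ in $\Omega$, and then to read off from this, via averaging over the loops $c_i(h)$, that $u_i^+$ is a viscosity subsolution of $\eqref{lim-HJ}$; the statement for $u_i^-$ is symmetric. First I would recall that by the standard stability of viscosity solutions the half-relaxed limits satisfy $-b\cdot Dv^+ \le 0$ and $-b\cdot Dv^- \ge 0$ in $\Omega$ in the viscosity sense (already noted in the excerpt), which is what forces $v^+$ and $v^-$ to be constant on each loop $c_i(h)$ and hence defines $u_i^\pm$. But this alone is too weak: it only records the leading-order $1/\varepsilon$ term. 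The real content is to extract the next-order information, and for that the perturbed test function method is the right tool.

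The key step is as follows. Fix $i \in \{1,2,3\}$, fix $h_0 \in J_i$, and suppose $u_i^+ - \phi$ has a strict local maximum at $h_0$ for some smooth $\phi$ on $J_i$; we must show $\lambda u_i^+(h_0) + \overline{G}_i(h_0, \phi'(h_0)) \le 0$. Consider the test function $\psi(x) = \phi(H(x))$ on a neighborhood in $\Omega_i$ of the loop $c_i(h_0)$. One checks $D\psi(x) = \phi'(H(x))\,DH(x)$, so $-b(x)\cdot D\psi(x)/\varepsilon = 0$ since $b\cdot DH \equiv 0$. Then $u^\varepsilon - \psi$ attains a max at some $x_\varepsilon$, and by the subsolution property of $u^\varepsilon$,
\begin{equation*}
\lambda u^\varepsilon(x_\varepsilon) - \frac{b(x_\varepsilon)\cdot D\psi(x_\varepsilon)}{\varepsilon} + G(x_\varepsilon, D\psi(x_\varepsilon)) \le 0,
\end{equation*}
i.e. $\lambda u^\varepsilon(x_\varepsilon) + G(x_\varepsilon, \phi'(H(x_\varepsilon))\,DH(x_\varepsilon)) \le 0$. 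The subtlety is that $x_\varepsilon$ need not converge to the point where $\phi(H(\cdot))$ relevant to $u_i^+$ is tested, because $\psi$ is constant on loops and the maximum of $u^\varepsilon - \psi$ can wander around the loop $c_i(h_0)$; moreover, a single pointwise inequality along the loop is not enough to produce the \emph{averaged} Hamiltonian $\overline{G}_i$. The standard remedy, and the heart of the argument, is to perturb $\psi$ by a corrector: set $\psi^\varepsilon(x) = \phi(H(x)) + \varepsilon\, w(x)$, where $w$ is chosen (locally near $c_i(h_0)$, using the arc-length/time parametrization of the flow) so that $b(x)\cdot Dw(x)$ compensates the oscillation of $G(x, \phi'(H(x))DH(x))$ around its loop-average $\overline{G}_i(H(x),\phi'(H(x)))$. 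Concretely, along the trajectory $X(\cdot,x)$ one solves the transport ODE $\frac{d}{dt} w(X(t,x)) = G(X(t,x), \phi'(h_0)DH(X(t,x))) - \overline{G}_i(h_0,\phi'(h_0))$, which is solvable (and gives a bounded, smooth-in-$x$ corrector) precisely because the right side has zero mean over a period $T_i(h_0)$; the regularity needed is supplied by Lemma~\ref{T_i}. Feeding $\psi^\varepsilon$ into the subsolution inequality for $u^\varepsilon$, the $1/\varepsilon$ terms combine to give $-b\cdot D w = \overline{G}_i(h_0,\phi'(h_0)) - G(x, \phi'(h_0)DH(x)) + o(1)$, and after passing to the limit along a subsequence with $x_\varepsilon \to \bar x \in c_i(h_0)$ and $u^\varepsilon(x_\varepsilon) \to v^+(\bar x) = u_i^+(h_0)$, the pointwise $G$-term cancels and one is left with $\lambda u_i^+(h_0) + \overline{G}_i(h_0, \phi'(h_0)) \le 0$. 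Convexity (G3) and coercivity (G4)/Lemma~\ref{G-bar} are used to control $D\psi^\varepsilon(x_\varepsilon)$ and to legitimize the limit; continuity (G1)--(G2) handles the error terms.

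The supersolution statement $u_i^- \in \mathcal S_i^+$ is entirely parallel: test $v^-$ from below with $\phi(H(\cdot))$ plus an $\varepsilon$-corrector solving the same transport equation with the sign reversed, use the supersolution property of $u^\varepsilon$, and pass to the limit. The main obstacle I anticipate is precisely the construction and estimation of the corrector $w$ in a full neighborhood of the loop $c_i(h_0)$ — not just on the single loop — uniformly enough that $\varepsilon w \to 0$ in $C^1$ on a fixed neighborhood while $b\cdot Dw$ stays close to the desired compensating term; this requires differentiating the period map and the flow in $h$, which is where Lemmas~\ref{T_i} and~\ref{L_i} (the $C^1$ dependence of $T_i$ and $L_i$, and the $O(|\log|h||)$ bound) do the essential work, and one must be careful near the endpoint $h_0 \in J_i$ being an interior point of $J_i$ (the degenerate behavior at $h=0$ and at $h=h_i$ is not an issue here since we only test at interior points, but integrability of $T_i/L_i$, already established in the proof of Lemma~\ref{con-ext}, is what keeps the correctors bounded).
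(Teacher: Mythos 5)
Your plan follows the paper's proof essentially step for step: the paper also uses a perturbed test function $\phi\circ H + \varepsilon\psi$, where $\psi$ is a corrector obtained by integrating the oscillation of $G(\cdot,\phi'\circ H\,DH)$ around its loop-average along the flow $X(t,x)$; the $b\cdot DH\equiv 0$ cancellation and the zero-mean solvability condition over one period are exactly as you describe, and the paper then extracts maximum points $x_n$ of $u^{\varepsilon_n}-\phi\circ H-\varepsilon_n\psi$, shows $x_n\to\hat x\in c_i(\hat h)$ with $u^{\varepsilon_n}(x_n)\to u_i^+(\hat h)$ using the strictness of the maximum, and passes to the limit in the viscosity subsolution inequality.

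The two obstacles you flag at the end are real, and it is worth noting how the paper disposes of them, since your sketch does not. First, $G$ is only assumed continuous, so the integrand $g(x)=G\big(x,\phi'\circ H(x)DH(x)\big)$ is merely $C^0$ and the naive corrector $\int_0^{\tau_i(x)}\big(g(X(t,x))-\bar g(x)\big)\,dt$ need not be $C^1$; the paper replaces $g$ by a $C^1$ approximation $f$ with $|g-f|<\eta/2$ on a fixed neighborhood $V_r$ of the loop, builds $\psi$ from $f$, and removes the $\eta$-error at the very end using that $\eta>0$ was arbitrary. Second, the first hitting time $\tau_i(x)$ of the transversal curve $l_i$ jumps across $l_i$, so $\psi$ defined with $\tau_i$ is a priori $C^1$ only on $\Omega_i\setminus l_i$; the zero-mean identity \eqref{solvability} lets the paper rewrite $\psi$ using the shifted time $\tilde\tau_i$ on the set $U_i$, and since $\Omega_i\setminus l_i$ and $U_i$ cover $\Omega_i$ with the two formulas agreeing on the overlap, $\psi\in C^1(\Omega_i)$. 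These are precisely the pieces your proposal leaves to be filled in; once they are in place, the rest of your argument coincides with the paper's.
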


With this theorem at hand,
we assume (see Lemma \ref{con-ext}) that $u_i^+ \in C(\bar J_i)$ for all $i \in \{ 1, 2, 3 \}$.
Moreover, by assumption (G6), we have
\begin{equation} \label{bc-u_i^+}
u_i^+ (h_i) = \lim_{J_i \ni h \to h_i} u_i^- (h) = v^{\pm} (x) = d_i
\ \ \ \text{ for all } x \in c_i (h_i) \text{ and } i \in \{ 1, 2, 3 \}.
\end{equation}

\begin{lem} \label{v^+-u_i^+}
We have
\begin{equation*}
v^+ (x) \leq \min _{i \in \{ 1, 2, 3 \}} u_i^+ (0) \ \ \ \text{ for all } x \in c_2 (0).
\end{equation*}
\end{lem}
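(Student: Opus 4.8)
\emph{Plan of proof.} Fix $x\in c_2(0)$ and $i\in\{1,2,3\}$; it suffices to prove $v^+(x)\le u_i^+(0)$. The idea is to combine the representation \eqref{represent} of $u^\varepsilon$ with the dynamic programming principle (Proposition \ref{prop: DPP}): for $\varepsilon$ small and $y$ close to $x$, we construct an admissible control $\alpha=\alpha^\varepsilon$ driving the trajectory $X^\varepsilon(\cdot,y,\alpha)$, in a small time $t^\varepsilon$ and without leaving the open set $\Omega$, onto a loop $c_i(h)$ with $|h|$ small. Since $v^+$ is constant on $c_i(h)$, equal to $u_i^+(h)$, and $u_i^+\in C(\bar J_i)$, the definition of $v^+$ and the compactness of $c_i(h)$ give, for any $\eta>0$, a level $h$ with $|h|$ small (depending on $\eta$) and a radius $\rho>0$ such that $u^\varepsilon(z)\le u_i^+(0)+\eta$ whenever $\varepsilon<\rho$ and $\operatorname{dist}(z,c_i(h))<\rho$. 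Because the trajectory stays in $\Omega$ we have $t^\varepsilon<\tau^\varepsilon$, so Proposition \ref{prop: DPP} gives
\[
u^\varepsilon(y)\le\int_0^{t^\varepsilon}L\big(X^\varepsilon(s,y,\alpha),\alpha(s)\big)e^{-\lambda s}\,ds+u^\varepsilon\big(X^\varepsilon(t^\varepsilon,y,\alpha)\big)e^{-\lambda t^\varepsilon}.
\]
Keeping $|\alpha|\le\nu$ along the trajectory, \eqref{upper-bound-L} yields $L\le M$ there, and landing on $c_i(h)$ gives $u^\varepsilon(X^\varepsilon(t^\varepsilon,y,\alpha))\le u_i^+(0)+\eta$ for $\varepsilon$ small, whence $u^\varepsilon(y)\le\frac{M}{\lambda}(1-e^{-\lambda t^\varepsilon})+(u_i^+(0)+\eta)e^{-\lambda t^\varepsilon}$. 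Letting $\varepsilon\to0$ with $t^\varepsilon\to0$ (see below), then $\eta\to0$, gives $v^+(x)\le u_i^+(0)$.

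The control $\alpha^\varepsilon$ is built from the feedback $\pm\,\nu\,DH(X^\varepsilon)/|DH(X^\varepsilon)|$, cut off to $0$ near the origin (say on $B_{\kappa/2}$, as for $Y^\varepsilon$ in the proof of Proposition \ref{viscosity-sol}), so that $|\alpha^\varepsilon|\le\nu$ and, since $b\cdot DH\equiv0$, $\frac{d}{dt}H(X^\varepsilon)$ equals $\pm\nu|DH(X^\varepsilon)|$ off $B_{\kappa/2}$ and $0$ on $B_{\kappa/2}$ (compare \eqref{growth-H}); thus $H(X^\varepsilon(\cdot))$ is monotone and changes at rate $\ge\nu c_0$ off $B_{\kappa/2}$, where $c_0=\min_{\overline\Omega\setminus B_{\kappa/2}}|DH|>0$. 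If $i=2$, or if $i\in\{1,3\}$ and $x\in c_i(0)$, one simply pushes $H$ monotonically from $H(y)$ onto $c_i(h)$ ($h>0$ small if $i=2$, $h<0$ small otherwise), remaining in $\Omega$. If $i\in\{1,3\}$ and $x\in c_2(0)\setminus c_i(0)$ (so $x$ is on the ``other'' lobe of the figure-eight), one first raises $H$ from $H(y)$ to a small fixed level $h'>0$, reaching the large loop $c_2(h')$; then coasts ($\alpha^\varepsilon\equiv0$, $H$ unchanged) around $c_2(h')$ until $X^\varepsilon$ lies in the lobe adjacent to $D_i$ and outside $B_{\kappa/2}$, which happens within time $\le\varepsilon T_2(h')$; and finally lowers $H$ through $\{H=0\}$ into $\Omega_i$ and down to a small level $h<0$, landing on $c_i(h)$, the downward push being activated only while $X^\varepsilon$ is in the lobe adjacent to $D_i$ and outside $B_{\kappa/2}$, so that $\{H=0\}$ is crossed inside that lobe and away from the origin. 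Since $H(X^\varepsilon(\cdot))$ never takes the values $h_1,h_2,h_3$ and the trajectory stays in $\Omega_2\cup\Omega_i\cup\{H=0\}\subset\Omega$, the first exit time is never reached; the junction point $x=0$ is handled separately by first leaving it along the vertical segment $t\mapsto(0,\nu t/2)$ used in the proof of Proposition \ref{viscosity-sol} before switching on the feedback.

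It remains to check that $t^\varepsilon$ is small. As $\frac{d}{dt}H(X^\varepsilon)$ does not involve $\varepsilon$, the $H$-balance shows that the time spent outside $B_{\kappa/2}$ during a push changing $H$ by an amount of size $\le|h|+h'$ is at most $(|h|+h')/(\nu c_0)$. By the linearized-saddle picture (H3) together with \eqref{def-L_i}, a slow--fast (averaging) argument shows that the time spent inside $B_{\kappa/2}$ while $H$ traverses a small interval $I$ around $0$ is comparable to $\int_I T_i(s)\,ds$, which is finite since $T_i\in L^1(J_i)$ (Lemma \ref{T_i}, cf.\ the proof of Lemma \ref{con-ext}) and tends to $0$ as $|I|\to0$; the coasting phase costs a time $\le\varepsilon T_2(h')\to0$. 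Hence $t^\varepsilon\le\omega(h,h')+o_\varepsilon(1)$ with $\omega(h,h')\to0$ as $h,h'\to0$, as was used above. I expect the main obstacle to be precisely this time estimate near the origin --- making rigorous, despite the logarithmic blow-up of the periods $T_i$ near the critical value, that the maneuver time genuinely vanishes in the limit --- together with the somewhat delicate bookkeeping that keeps $X^\varepsilon$ inside $\Omega$ and forces the figure-eight to be crossed through the intended lobe, away from the saddle.
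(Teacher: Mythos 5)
Your overall strategy --- steer the controlled trajectory from a neighborhood of $c_2(0)$ onto a known loop $c_i(h)$ with $|h|$ small, then invoke the dynamic programming principle (Proposition \ref{prop: DPP}) and the bounded Lagrangian to compare $u^\varepsilon$ near $x$ with $u^\varepsilon$ on $c_i(h)\approx u_i^+(h)+\eta$ --- is the same as the paper's. The routing idea (coast around a large loop $c_2(h')$ to switch lobes before descending, echoing the paper's use of the fixed points $p_1,p_3\in c_i(0)\setminus\{0\}$) is also essentially the same.

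The gap is exactly where you suspect it: the time estimate. By cutting off the feedback $\pm\nu\,DH/|DH|$ on $B_{\kappa/2}$, you leave the trajectory to drift under the fast Hamiltonian flow alone whenever it is near the saddle, and since $|b(y)|=2|y|\to 0$ there, the fast flow is slow precisely where you need to make progress. Your ``slow--fast averaging argument'' that the time inside $B_{\kappa/2}$ is comparable to $\int_I T_i(s)\,ds$ is not established, and in fact the statement as written is not even dimensionally of the right form: the trajectory re-enters $B_{\kappa/2}$ once per loop, each visit at a shifting $H$-level, and tracking the total time through those visits is not a small bookkeeping exercise. You correctly identify this as the crux, but do not resolve it. The paper sidesteps the entire difficulty by \emph{not} cutting off: the feedback $\gamma(x)=\mu\,DH(x)/|DH(x)|$ is used throughout $\Omega\setminus\{0\}$, and the key algebraic fact from (H3) is that $|DH(x)|\geq c_0|H(x)|^{1/2}$ on all of $\Omega$ (including arbitrarily close to $0$). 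With $\psi(r)=r|r|^{-1/2}$ one computes
\[
\frac{d}{dt}\,\psi\!\left(H(\xi^\varepsilon(t,x,\gamma))\right)
=\frac{\mu}{2}\,|H|^{-1/2}|DH|\geq \frac{c_0\mu}{2},
\]
so $\psi\circ H$ increases at a uniformly positive rate and the time the trajectory can spend in $\Omega(h)=\{|H|<h\}$ is $O(\sqrt{h})$, with no $\varepsilon$-dependence and no near-saddle subtlety (Lemma \ref{lem: tau_2-tau_1}). There is no regularity obstruction to using $\gamma$ up to the origin: $|\gamma|\equiv\mu$, so the running cost stays bounded by \eqref{upper-bound-L}, and the trajectory never hits $0$ because $H\circ\xi^\varepsilon$ is strictly monotone and $H(x)\neq 0$ away from the origin. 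Replacing your cutoff and averaging heuristic with this single inequality-and-substitution step is what makes the proof close.
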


\begin{lem} \label{v^--d_0} 
Set $d_0 = \min_{i \in \{ 1, 2, 3 \}} u_i^+ (0)$. Then
\begin{equation*}
v^- (x) \geq d_0 \ \ \ \text{ for all } x \in c_2 (0). 
\end{equation*}
\end{lem}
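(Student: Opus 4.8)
The plan is to bound $u^\varepsilon$ from below near $c_2(0)$ using the control representation \eqref{represent} together with a viscosity subsolution of the limiting equation \eqref{lim-HJ} on the graph as a verification function, and then to pass to the limit. Fix $x\in c_2(0)$. Since $v^-(x)$ is attained as $\liminf_n u^{\varepsilon_n}(x_n)$ for suitable $\varepsilon_n\to0+$ and $\overline\Omega\ni x_n\to x$, it is enough to show $\liminf_n u^{\varepsilon_n}(x_n)\ge d_0$. For $\eta\in(0,1)$ I would first fix, for each $i\in\{1,2,3\}$, a function $\psi_i=\psi_i^\eta\in C^1(\bar J_i)$ which is a strict classical subsolution of \eqref{lim-HJ}, i.e. $\lambda\psi_i(h)+\overline G_i(h,\psi_i'(h))\le-\eta$ on $J_i$, with $\psi_i(h_i)\le d_i$ and with a common node value $\psi_1(0)=\psi_2(0)=\psi_3(0)\in[d_0-\eta,d_0]$; such functions exist because $d_0=\min_j u_j^+(0)\le u_i^+(0)$, $u_i^+-\eta$ is a strict subsolution with boundary value $d_i-\eta\le d_i$ (recall Theorem \ref{characterize}), and the node value can be lowered to a common level using the coercivity of $\overline G_i$ from Lemma \ref{G-bar}. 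These glue to $\psi\in C(\overline\Omega)$, $\psi=\psi_i\circ H$ on $\overline\Omega_i$, and by (G6) we have $\psi\le g^\varepsilon$ on $\partial\Omega$ for $\varepsilon$ small.

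Next I would take near-optimal controls $\alpha_n$ in \eqref{represent} with trajectory $X_n=X^{\varepsilon_n}(\cdot,x_n,\alpha_n)$ and visiting time $\tau_n$ (passing to a subsequence so that $X_n(\tau_n)\in\partial_{i_0}\Omega$ for a fixed $i_0$); since $u^{\varepsilon_n}$ is bounded and $L$ is superlinearly coercive in its second variable by (G3)--(G4), the family $\{e^{-\lambda t}\alpha_n\}$ is equi-integrable on $[0,\infty)$. The crucial structural fact is $b\cdot DH\equiv0$, which gives $\frac{d}{dt}H(X_n(t))=DH(X_n(t))\cdot\alpha_n(t)$ and $b\cdot D(\psi_i\circ H)\equiv0$. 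Putting $\Phi_n(t)=\psi(X_n(t))e^{-\lambda t}+\int_0^t L(X_n(s),\alpha_n(s))e^{-\lambda s}\,ds$ and applying the Fenchel inequality $L(y,\xi)\ge-\xi\cdot p-G(y,p)$ with $p=\psi_i'(H(X_n))DH(X_n)$ on each time-interval where $X_n$ stays in a lobe $\Omega_i$, one gets, a.e.,
\begin{equation*}
\frac{d}{dt}\Phi_n(t)\ \ge\ -e^{-\lambda t}\bigl[\lambda\psi_i(H(X_n(t)))+G\bigl(X_n(t),\psi_i'(H(X_n(t)))DH(X_n(t))\bigr)\bigr].
\end{equation*}
Integrating over $[0,\tau_n]$, using $\Phi_n(0)=\psi(x_n)\to\psi(x)\ge d_0-\eta$, and writing the near-optimal cost as $\Phi_n(\tau_n)+\bigl(g^{\varepsilon_n}(X_n(\tau_n))-\psi_{i_0}(h_{i_0})\bigr)e^{-\lambda\tau_n}$ — whose last term is $\ge-o(1)$ because $\psi_{i_0}(h_{i_0})\le d_{i_0}$ and $g^{\varepsilon_n}\to d_{i_0}$ on $\partial_{i_0}\Omega$ by (G6) — I would arrive at $u^{\varepsilon_n}(x_n)+o(1)\ge\psi(x_n)-I_n$, where $I_n$ denotes the above integral.

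The decisive step is to show $I_n\le o(1)$. Here I would replace $G(X_n(t),\psi_i'(H(X_n(t)))DH(X_n(t)))$ by its loop-average $\overline G_i(H(X_n(t)),\psi_i'(H(X_n(t))))$ over $c_i(H(X_n(t)))$: the replaced integrand is $\le-\eta e^{-\lambda t}\le0$ by the strict subsolution inequality, so it remains to show that the averaging error tends to $0$ as $\varepsilon_n\to0+$. Away from the node this is a standard averaging estimate, since the fast flow $b/\varepsilon_n$ sweeps each loop $c_i(h)$ in time $O(\varepsilon_n T_i(h))$ while, by the equi-integrability above, $H(X_n(\cdot))$ has $e^{-\lambda t}$-integrable speed; together with $\psi(x_n)\to\psi(x)\ge d_0-\eta$ this yields $\liminf_n u^{\varepsilon_n}(x_n)\ge d_0-\eta$, and letting $\eta\to0+$ finishes the proof.

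The hard part will be the averaging estimate near the node $\{H=0\}$ (together with the contribution of the times at which $X_n$ lies on $\{H=0\}$): there Lemma \ref{T_i} gives $T_i(h)\sim|\log|h||\to\infty$, so a loop is no longer traversed in vanishing time and the homoclinic orbits of the unperturbed flow enter the picture. Controlling this portion should require exploiting the hyperbolic normal form $H=x_2^2-x_1^2$ of (H3), the slowdown of the flow near the saddle, and a choice of $\psi_i$ whose slope is small near $0$; this, together with the construction in the first paragraph of $C^1$ strict subsolutions realizing a prescribed common node value, is the technical core of Section 6.
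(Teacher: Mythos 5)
Your proposal is a genuinely different route from the paper's: you attempt a trajectory-level verification argument using the control representation and Fenchel's inequality, whereas the paper proves Lemma \ref{v^--d_0} by a PDE comparison argument. The paper sets $d=\min_{c_2(0)}v^-$, supposes $d<d_0$, constructs from Lemma \ref{key} a global corrector $\psi\in C^1(\Omega(\delta))$ satisfying $-b\cdot D\psi+G(x,0)<G(0,0)+\eta$, and then builds a classical subsolution $w^\varepsilon=c-\gamma+\varepsilon\psi$ of the $\varepsilon$-problem in $\Omega(\delta)$ with boundary values controlled through the extremal subsolutions $v_i=u_i^+\wedge\nu_i^d$ of \eqref{lim-HJ}; comparison on $\overline{\Omega(\delta)}$ then yields $v^-\ge c-\gamma>d$, a contradiction. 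No averaging along near-optimal trajectories is required.

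The structural skeleton of your verification argument is fine (the Fenchel step with $b\cdot DH\equiv 0$, the role of (G6) on $\partial_i\Omega$, lowering the boundary slightly to $d_i-\eta$). But the decisive step, showing $I_n\le o(1)$, is exactly where the proof actually lives, and you explicitly flag it as not done. This is a genuine gap, not a deferred technicality: the averaging error near $c_2(0)$ is precisely the obstruction the whole section is designed to overcome, because $T_i(h)\sim|\log|h||$ as $h\to 0$ means individual loops are no longer traversed in $o(1)$ slow time and the near-optimal trajectory can linger near the saddle. The paper circumvents this at the PDE level through Lemma \ref{key}, whose construction explicitly exploits Lemma \ref{low-bound-T_i^r} (the lower bound $T^r\gtrsim\log(\kappa/r)$) to make the cell-problem average $\bar f$ small, while using Lemma \ref{length-c_i} and Lemma \ref{lim-min-barG_i} to control the contribution of the node. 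Your plan would need an analogue of all of this carried out along trajectories, and it is not clear the sketch you give (equi-integrability of $\alpha_n$, hyperbolic normal form, small slope of $\psi_i$ near $0$) assembles into such an estimate. Until that estimate is supplied, the proposal does not establish the lemma.

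Two secondary issues you should also address if you pursue this route. First, the glued function $\psi=\psi_i\circ H$ on $\overline\Omega_i$ is at best Lipschitz across $c_2(0)$, not $C^1$, so $\Phi_n(t)$ need not be differentiable at the times $X_n$ crosses the homoclinic orbit or passes through the origin; you need to argue that these transition times have measure zero (or handle them separately) before you can integrate $\frac{d}{dt}\Phi_n$. Second, the constraint coming from Lemma \ref{u_i^+0-G00}, namely $\lambda d_0 + G(0,0)\le 0$, can be an equality, in which case the common node value $\psi_i(0)\in[d_0-\eta,d_0]$ does not automatically give the strict margin $\lambda\psi_i(0)+\overline G_i(h,\psi_i'(h))\le-\eta$ near $h=0$ uniformly in $h$; the offset at the node has to be calibrated against $\eta/\lambda$, and the construction of the gluing must be checked against \eqref{general-lim-barG_i}. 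These are repairable, but the missing averaging estimate is not a detail.
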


Assuming temporarily
Theorem \ref{characterize}, and Lemmas \ref{v^+-u_i^+} and \ref{v^--d_0},
we continue with the

\begin{proof}[Proof of Theorem \ref{main}]
By Theorem \ref{characterize}, we have
$u_i^+ \in \mathcal S_i^- \cap C(\bar J_i)$ and $u_i^- \in \mathcal S_i^+$ for all $i \in \{ 1, 2, 3 \}$.

By definition of the half-relaxed limits, it is obvious that
\begin{equation*} \label{v^--v^+}
v^- \leq v^+ \ \ \ \text{ on } \overline \Omega,
\end{equation*}
that $v^+$ and $-v^-$ are upper semicontinuous on $\overline \Omega$
and that if $v^+ \leq v^-$, then $v^+ = v^-$ and, as $\varepsilon \to 0+$,
\begin{equation*}
u^\varepsilon \to v^- = v^+ \ \ \ \text{ uniformly on }  \overline \Omega.
\end{equation*}

By Lemmas \ref{v^+-u_i^+} and \ref{v^--d_0}, we have
\begin{equation*}
v^+ (x) \leq d_0 \leq v^- (x) \ \ \ \text{ for all } x \in c_2 (0),
\end{equation*}
where $d_0 := \min_{i \in \{ 1, 2, 3 \}} u_i^+ (0)$.
Moreover, by the semicontinuity properties of $v^\pm$, we get
\begin{equation*}
u_i^+ (0) \leq v^+ (x) \leq d_0 \leq v^- (x) \leq \lim_{J_i \ni h \to 0} u_i^- (h)
\ \ \ \text{ for all } x \in c_2 (0) \text{ and } i \in \{ 1, 2, 3 \}.
\end{equation*}
This implies that 
\begin{equation*}
u_i^+ (0) = v^+ (x) = v^- (x) = \lim_{J_i \ni h \to 0} u_i^- (h)
\ \ \ \text{ for all } x \in c_2 (0) \text{ and } i \in \{ 1, 2, 3 \}.
\end{equation*}
According to \eqref{bc-u_i^+}, we have
\begin{equation*}
u_i^+ (h_i) = \lim_{J_i \ni h \to h_i} u_i^- (h) \ \ \ \text{ for all } i \in \{ 1, 2, 3 \}.
\end{equation*}
Thus, by the comparison principle applied to \eqref{lim-HJ},
we find that $u_i^+ = u_i^-$ in $J_i$ for all $i \in \{ 1, 2, 3 \}$.
In particular, setting $u_i = u_i^+$ on $\bar J_i$ for $i \in \{ 1, 2, 3 \}$ and recalling \eqref{bc-u_i^+},
we see that $u_i \in \mathcal S_i \cap C(\bar J_i)$ for all $i \in \{ 1, 2, 3 \}$,
that $u_1 (0) = u_2 (0) = u_3 (0)$, that $u_i (h_i) = d_i$ for all $i \in \{ 1, 2, 3 \}$, and that
\begin{equation*}
v^+ (x) = v^- (x) = u_i (h) \ \ \ \text{ for all } x \in c_i (h), h \in \bar J_i, \text{ and } i \in \{ 1, 2, 3 \},
\end{equation*}
that is, 
\begin{equation*}
v^+ (x) = v^- (x) = u_i \circ H(x) \ \ \ \text{ for all } x \in \overline \Omega_i \text{ and } i \in \{ 1, 2, 3 \}.
\end{equation*} 
This completes the proof. 
\end{proof}

\section{Proof of Theorem \ref{characterize}}

Before giving the proof of Theorem \ref{characterize},
we introduce the functions $\tau_i$ and $\tilde \tau_i$.

For each $i \in \{ 1, 2, 3 \}$, we fix $p_i \in c_i (0) \setminus \{ 0 \}$,
denote by $Y_i (h)$ the solution of the initial value problem
\begin{align*}
\begin{cases}
Y' (h) = \cfrac{DH(Y(h))}{|DH(Y(h))|^2} \ \ \ \text{ for } h \in \bar J_i \setminus \{ 0 \}, \\
Y(0) = p_i,
\end{cases}
\end{align*}
and set
\begin{equation*}
l_i = \{ Y_i (h) \ | \ h \in \bar J_i \setminus \{ 0 \} \}.
\end{equation*}
It is immediate that
\begin{equation*}
Y_i \in C^1(\bar J_i ; \mathbb R^2) \ \ \ \text{ and } \ \ \ H(Y_i (h)) = h \ \ \ \text{ for all } h \in \bar J_i \text{ and } i \in \{ 1, 2, 3 \}. 
\end{equation*}

For $i \in \{ 1, 2, 3 \}$ and $x \in \overline \Omega_i \setminus c_i (0)$,
let $\tau_i (x)$ be the first time
the flow $\{ X(t, x) \}_{t > 0}$ reaches the curve $l_i$, that is,
\begin{equation} \label{def-tau_i}
\tau_i (x) = \inf \{ t > 0 \ | \ X(t, x) \in l_i \}.
\end{equation}  
Note that although $\tau_i$ are continuous
in $\overline \Omega_i \setminus (c_i (0) \cup l_i)$,
they have jump discontinuities across the curves $l_i$.
To avoid this difficulty, for each $i \in \{ 1, 2, 3 \}$,
we modify $\tau_i$ near $l_i$ by considering
the set $U_i = \{ x \in \overline \Omega_i \setminus c_i (0) \ | \ \tau_i (x) \not= T_i \circ H(x)/2 \}$
and the function $\tilde \tau_i : U_i \to (0, \infty)$ defined by
\begin{equation*}
\tilde \tau_i (x) =
\begin{cases}
\tau_i (x)                      \ \ \ &\text{ if } \tau_i (x) > T_i \circ H(x)/2, \\
\tau_i (x) + T_i \circ H(x) \ \ \ &\text{ if } \tau_i (x) < T_i \circ H(x)/2.
\end{cases}
\end{equation*}

\begin{lem}
For all $i \in \{ 1, 2, 3 \}$, $\tau_i \in C^1 \left( \overline \Omega_i \setminus (c_i (0) \cup l_i) \right)$ and $\tilde \tau_i \in C^1 (U_i)$.
\end{lem}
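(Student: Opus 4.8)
The plan is to view $\tau_i$ (and $\tilde\tau_i$) as a first-return-type time to a transversal section and to extract $C^1$-regularity from the implicit function theorem, using three facts: the flow map $X(\cdot,\cdot)$ is $C^1$ on $\mathbb R\times\mathbb R^2$, the curve $l_i$ is $C^1$ since $Y_i\in C^1$, and $l_i$ is everywhere transverse to the flow. Transversality is immediate from $b\cdot DH\equiv 0$: the velocity $b$ is tangent to the level sets of $H$, whereas $Y_i'(h)=DH(Y_i(h))/|DH(Y_i(h))|^2$ is a nonzero multiple of $DH(Y_i(h))$; and $DH$ does not vanish on $\overline\Omega_i\setminus c_i(0)$, a set which contains no critical point of $H$, so that $b\neq 0$ there as well.

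For $\tau_i$, fix $i$ and $x_0\in\overline\Omega_i\setminus(c_i(0)\cup l_i)$, and set $h_0=H(x_0)\in\bar J_i\setminus\{0\}$, $t_0=\tau_i(x_0)$, $y_0=X(t_0,x_0)=Y_i(h_0)\in l_i$; since $x_0\notin l_i$ and $l_i$ meets the loop $c_i(h_0)$ only at $Y_i(h_0)$, we have $t_0\in(0,T_i(h_0))$. I would pick a unit vector $\omega$ with $\omega\cdot b(y_0)\neq 0$ and put $G(t,x)=\omega\cdot\big(X(t,x)-Y_i(H(x))\big)$ (extending $Y_i$ slightly beyond $\bar J_i$ if $h_0=h_i$, which is harmless since $DH\neq 0$ there). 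Then $G$ is $C^1$ near $(t_0,x_0)$, $G(t_0,x_0)=0$, and $\partial_t G(t_0,x_0)=\omega\cdot b(y_0)\neq 0$, so the implicit function theorem yields a $C^1$ map $x\mapsto t(x)$ near $x_0$ with $t(x_0)=t_0$ and $G(t(x),x)\equiv 0$. Since $H$ is conserved along the flow, $X(t(x),x)$ and $Y_i(H(x))$ lie on the same loop $c_i(H(x))$, and for $x$ near $x_0$ both are close to $y_0$; transversality of $\omega$ to that loop near $y_0$ then forces $X(t(x),x)=Y_i(H(x))\in l_i$. It remains to check $t(x)=\tau_i(x)$: on the compact interval $[0,t_0-\delta]$ the trajectory from $x_0$ stays at a positive distance from $l_i$ (its first hit of $l_i$ is at $t_0$, and the crossing there is isolated by transversality), so by continuous dependence the same holds for nearby $x$; and near $y_0$ the only hitting time of $l_i$ is $t(x)$, by local uniqueness in the implicit function theorem and the identification of $l_i$ near $y_0$ with a level line of $G$. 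Hence $\tau_i\in C^1$ near $x_0$.

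For $\tilde\tau_i$ on $U_i$ (which is open: away from $l_i$ this follows from continuity of $\tau_i$ there, and near a point of $l_i$ one has $\tau_i$ close to $0$ or to $T_i\circ H$, hence $\neq T_i\circ H/2$), there are two cases. If $x_0\in U_i\setminus l_i$, then near $x_0$ one has $\tilde\tau_i=\tau_i$ on $\{\tau_i>T_i\circ H/2\}$ and $\tilde\tau_i=\tau_i+T_i\circ H$ on $\{\tau_i<T_i\circ H/2\}$, both $C^1$ by the previous step and by $T_i\circ H\in C^1$ near $x_0$ (Lemma \ref{T_i}, $H\in C^2$, $H(x_0)\neq 0$). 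If $x_0\in l_i$, say $x_0=Y_i(h_0)$, then $\tau_i$ itself is discontinuous at $x_0$, but $X(T_i(h_0),x_0)=x_0\in l_i$, so repeating the transversal-section argument centered at time $T_i(h_0)$ produces a $C^1$ function $\sigma$ near $x_0$ with $\sigma(x_0)=T_i(h_0)$ and $X(\sigma(x),x)\in l_i$. One then checks $\tilde\tau_i=\sigma$ near $x_0$: if $x$ is ``downstream'' of $l_i$, then $\tau_i(x)$ is small and $\tilde\tau_i(x)=\tau_i(x)+T_i\circ H(x)$ is the second hit of $l_i$, which is $\sigma(x)$; if $x$ is ``upstream'', then $\tilde\tau_i(x)=\tau_i(x)=\sigma(x)$; and on $l_i$ all of these equal $T_i\circ H$. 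Thus $\tilde\tau_i\in C^1(U_i)$.

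The main obstacle I anticipate is the bookkeeping in the second paragraph: confirming that the $C^1$ function produced by the implicit function theorem really is the relevant hitting time of $l_i$ — that no spurious earlier crossing appears for nearby $x$, and that the local zero set of $G$ coincides with $l_i$ near $y_0$ — which rests on the transversality of $l_i$ to the flow together with a compactness/continuous-dependence argument on a compact time interval.
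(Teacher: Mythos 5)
The paper does not give a proof of this lemma: immediately after the statement it says that it, together with Lemma~\ref{T_i}, ``can be found in \cites{IS} in a slightly different context, to which we refer for the proof.'' So there is no in-paper argument to compare against; your proposal has to stand or fall on its own terms.

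On its own terms, your argument is sound and is the natural one. The three ingredients you isolate are exactly what is needed: $X\in C^1(\mathbb R\times\mathbb R^2)$ (which the paper records just after \eqref{HS}), $Y_i\in C^1(\bar J_i;\mathbb R^2)$, and the transversality $b\cdot DH\equiv 0$ with $DH\neq 0$ off the critical set, which makes $l_i$ a $C^1$ section everywhere transverse to the flow. The implicit function theorem applied to $G(t,x)=\omega\cdot\bigl(X(t,x)-Y_i(H(x))\bigr)$ with $\omega\cdot b(y_0)\neq 0$ is the right device, and the step you flag as the potential obstacle does close: because $H$ is conserved, both $X(t(x),x)$ and $Y_i(H(x))$ lie on the loop $c_i(H(x))$, that loop meets $l_i$ in exactly one point, and near $y_0$ the function $z\mapsto\omega\cdot z$ is strictly monotone along the loop (its tangent is $b$, and $\omega\cdot b(y_0)\neq 0$), so $G(t(x),x)=0$ does force $X(t(x),x)=Y_i(H(x))\in l_i$. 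The identification $t(\cdot)=\tau_i(\cdot)$ then follows from the compactness/continuous-dependence argument you sketch, using that for $x_0$ the trajectory on $[0,t_0-\delta]$ stays at positive distance from $Y_i(H(x_0))$ and that $Y_i\circ H$ is continuous. The treatment of $\tilde\tau_i$ is also correct: away from $l_i$, $U_i$ splits locally into the two open sets where $\tau_i\gtrless T_i\circ H/2$, and on each piece $\tilde\tau_i$ is a $C^1$ combination of $\tau_i$ and $T_i\circ H$; on $l_i$ itself, centering the same transversal-section argument at time $T_i\circ H(x_0)$ produces the $C^1$ function $\sigma$, and the case analysis by the sign of the signed crossing time $s(x)$ shows $\tilde\tau_i=\sigma$ near $x_0$. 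One small point worth making explicit is the openness of $U_i$, which your parenthetical remark handles correctly (near $l_i$, $\tau_i$ is close to $0$ or to $T_i\circ H$, hence bounded away from $T_i\circ H/2$). In short: correct, self-contained, and of the standard Poincar\'e-section flavor one would expect; there is simply no proof in the paper itself to contrast it with.
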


The lemma above, as well as Lemma \ref{T_i},
can be found in \cites{IS} in a slightly different context,
to which we refer for the proof.

\begin{proof}[Proof of Theorem \ref{characterize}]
We only show that $u_1^+ \in \mathcal S_1^-$
since the other cases can be treated in a similar way.

Let $\phi \in C^1 (J_1)$ and $\hat h \in J_1$ and assume that
$\hat h$ is a strict maximum point in $J_1$ of the function $u_1^+ - \phi$.
Set $V_r = \{ x \in \Omega_1 \ | \ |H(x) - \hat h| < r \}$ for $r > 0$
and fix $r > 0$ so that $\overline V_r \subset \Omega_1$.

Fix any $\eta > 0$.
Define the function $g \in C(\Omega_1)$ by
\begin{equation*}
g(x) = G\big( x, \phi' \circ H(x)DH(x) \big),
\end{equation*}
and choose a function $f \in C^1 (\Omega_1)$ so that
\begin{equation*}
|g(x) - f(x)| < \frac{\eta}{2} \ \ \ \text{ for all } x \in V_r.
\end{equation*}

Let $\psi$ be the function in $\Omega_1$ defined by
\begin{equation*}
\psi (x) = \int_0^{\tau_1 (x)} \Big( f(X(t, x)) - \bar f(x) \Big) \, dt,
\end{equation*}
where
\begin{equation*}
\bar f (x) := \cfrac{1}{T_1 \circ H(x)} \int_0^{T_1 \circ H(x)} f(X(s, x)) \, ds,
\end{equation*}
and observe that
\begin{equation} \label{solvability}
\int_0^{T_1 \circ H(x)} \Big( f(X(t, x)) - \bar f(x) \Big) \, dt = 0 \ \ \ \text{ for all } x \in \Omega_1.
\end{equation}

Recalling that
$X \in C^1 (\mathbb R \times \mathbb R^2)$,
$\tau_1 \in C^1 \left( \overline \Omega_1 \setminus (c_1 (0) \cup l_1) \right)$, and $T_1 \in C^1 (\bar J_1 \setminus \{ 0 \})$,
it is clear that $\psi \in C^1 (\Omega_1 \setminus l_1)$.
Moreover, recalling the definition of $\tilde \tau_1$,
we obtain from \eqref{solvability} that
\begin{equation*}
\psi (x) = \int_0^{\tilde \tau_1 (x)} \Big( f(X(t, x)) - \bar f(x) \Big) \, dt \ \ \ \text{ for all } x \in U_1,
\end{equation*}
and, hence, we see that $\psi \in C^1 (U_1)$
and, moreover, that $\psi \in C^1 (\Omega_1)$.
By using the dynamic programming principle, we see that
\begin{equation} \label{eq: colector}
- b \cdot D\psi = -f + \bar f \ \ \ \text{ in } \Omega_1. 
\end{equation} 
Indeed, for any $x \in \Omega_1$ and $s \in \mathbb R$, we have 
\begin{align*}
\begin{aligned}
\psi (X(-s, x)) &= \int_0^{\tau_1 (X(-s, x))} \Big( f \big( X(t, X(-s, x)) \big) - \bar f(X(-s, x)) \Big) \, dt \\
                   &=\int_{-s}^{\tau_1(x)} \Big( f(X(t, x)) - \bar f(x) \Big) \, dt.
\end{aligned}
\end{align*}
Differentiating this with respect to $s$ at $s=0$, we get 
\begin{equation*}
-b(x) \cdot D\psi (x) = -f(x) + \bar f(x). 
\end{equation*}

Choose sequences $\{ \varepsilon_n \}_{n \in \mathbb N} \subset (0, 1)$, converging to zero,
and $\{ y_n \}_{n \in \mathbb N} \subset \overline V_r$ so that $\lim_{n \to \infty} y_n = x_0$
and $\lim_{n \to \infty} u^{\varepsilon_n} (y_n) = v^+ (x_0)$ for some $x_0 \in c_1 (\hat h)$.
Let $\{ x_n \}_{n \in \mathbb N} \subset \overline V_r$ be a sequence consisting of
maximum points over $\overline V_r$ of the functions $u^{\varepsilon_n} - \phi \circ H - \varepsilon_n \psi$.
By replacing the sequence by its subsequence if necessary, we may assume that
$\lim_{n \to \infty} x_n = \hat x$ and $\lim_{n \to \infty} u^{\varepsilon_n} (x_n) = a$
for some $\hat x \in \overline V_r$ and $a \in \mathbb R$.
Noting that, for all $n \in \mathbb N$,
\begin{equation*}
(u^{\varepsilon_n} - \phi \circ H - \varepsilon_n \psi )(y_n) \leq (u^{\varepsilon_n} - \phi \circ H - \varepsilon_n \psi )(x_n),
\end{equation*}
and letting $n \to \infty$, we obtain
\begin{align*}
(u_1^+ - \phi )(\hat h) &= (v^+ - \phi \circ H)(x_0) \leq a - \phi \circ H (\hat x) \\
                              &\leq (v^+ - \phi \circ H)(\hat x) = (u_1^+ - \phi) \circ H(\hat x).
\end{align*}
Since $\hat h$ is a strict maximum point in $J_1$ of the function $u_1^+ - \phi$,
we see that $\hat x \in c_1 (\hat h)$ and, moreover, that $a = u_1^+ (\hat h)$.

If $\varepsilon = \varepsilon_n$ and $n$ is sufficiently large, then $x_n \in V_r$ and
\begin{equation*}
\lambda u^\varepsilon (x_n) - b(x_n) \cdot D\psi (x_n) + G\big( x_n, \phi' \circ H(x_n)DH(x_n) + \varepsilon D\psi (x_n) \big) \leq 0.
\end{equation*} 
Combining this with \eqref{eq: colector} yields
\begin{equation*}
\lambda u^\varepsilon (x_n) - f(x_n) + \bar f(x_n) + G\big( x_n, \phi' \circ H(x_n)DH(x_n) + \varepsilon D\psi (x_n) \big) \leq 0.
\end{equation*} 
Taking the limit, as $n \to \infty$, in the inequality above, we get
\begin{equation*}
\lambda u_1^+ (\hat h) - f(\hat x) + \bar f(\hat x) + g(\hat x) \leq 0,
\end{equation*} 
and, moreover,
\begin{align*}
0 &\geq \lambda u_1^+ (\hat h) - f(\hat x) + g(\hat x) + \frac{1}{T_1 (\hat h)} \int_0^{T_1(\hat h)} f(X(t, \hat x)) \, dt \\
   &> \lambda u_1^+ (\hat h) - \frac{\eta}{2} + \frac{1}{T_1 (\hat h)} \int_0^{T_1(\hat h)} \Big( g(X(t, \hat x)) - \frac{\eta}{2} \Big) \, dt \\ 
   &= \lambda u_1^+ (\hat h) + \overline G_1 (\hat h, \phi' (\hat h)) - \eta.
\end{align*} 
Since $\eta > 0$ is arbitrary, we conclude from the inequality above that
\begin{equation*}
\lambda u_1^+ (\hat h) + \overline G_1 (\hat h, \phi' (\hat h)) \leq 0,
\end{equation*}
and, therefore, $u_1^+ \in \mathcal S_1^-$.
\end{proof}

\section{Proof of lemma \ref{v^+-u_i^+}}

The key point of the proof of Lemma \ref{v^+-u_i^+}
is the behavior of $H(X^\varepsilon (t, x, \alpha))$ regarding the initial value $x$
in a neighborhood of the homoclinic orbit $\{ x \in \mathbb R^2 \ | \ H(x) = 0 \}$.

We write $\mathcal C$ for the subspace of those $\beta \in C^1 (\Omega \setminus \{ 0 \}; \mathbb R^2)$
that are bounded in $\Omega \setminus \{ 0 \}$
and, for each $\varepsilon \in (0, \varepsilon_0)$ and $\beta \in \mathcal C$,
consider the initial value problem
\begin{align} \label{beta}
\begin{cases}
\dot X^\varepsilon (t) = \cfrac{1}{\varepsilon} \, b(X^\varepsilon (t)) + \beta (X^\varepsilon (t)) \ \ \ \text{ for } t \in \mathbb R, \\
X^\varepsilon (0) = x \in \Omega \setminus \{ 0 \}.
\end{cases}
\end{align}
As is well-known, problem \eqref{beta} has a unique solution $X^\varepsilon (t)$,
which is also denoted by $\xi^\varepsilon (t, x, \beta)$,
in the maximal interval  $(\sigma_-^\varepsilon (x, \beta), \sigma_+^\varepsilon (x, \beta))$
where $\sigma_-^\varepsilon (x, \beta) < 0 < \sigma_+^\varepsilon (x, \beta)$,
and the maximality means that
either $\sigma_-^\varepsilon (x, \beta) = -\infty$ or
$\lim_{t \to \sigma_-^\varepsilon (x, \beta) + 0} \mathrm{dist} (X^\varepsilon (t), \partial \Omega \cup \{ 0 \}) = 0$,
and either $\sigma_+^\varepsilon (x, \beta) = \infty$ or
$\lim_{t \to \sigma_+^\varepsilon (x, \beta) - 0} \mathrm{dist} (X^\varepsilon (t), \partial \Omega \cup \{ 0 \}) = 0$.

Next we define $\gamma \in \mathcal C$ by
\begin{equation*}
\gamma (x) = \mu \, \cfrac{DH(x)}{|DH(x)|},
\end{equation*}
where $\mu$ is a positive constant chosen so that
\begin{equation*}
L(x, \xi ) \leq C \ \ \ \text{ for all } (x, \xi ) \in \overline \Omega \times \overline B_\mu \text{ and some } C > 0,
\end{equation*}
and set $h_0 = \min_{i \in \{ 1, 2, 3 \}} |h_i|$ and, for $h \in (0, h_0)$,
\begin{equation} \label{Omega-h}
\Omega_i (h) = \{ x \in \overline \Omega_i \ | \ 0 \leq |H(x)| < h \} \ \text{ for } i \in \{ 1, 2, 3 \} \ \ \text{ and } \ \ \Omega (h) = \bigcup_{i = 1}^3 \Omega_i (h).
\end{equation}

Now, by (H3), we have
\begin{equation*}
|DH(x)|^2 = 4|x|^2 \geq 4|H(x)| \ \ \ \text{ for all } x \in \overline B_\kappa.
\end{equation*}
Since $DH (x) \not= 0$ for all $x \in \Omega \setminus \overline B_\kappa$,
there exists $c_0 \in (0, 2)$ such that
\begin{equation*}
|DH(x)| \geq c_0 |H(x)|^{\frac{1}{2}} \ \ \ \text{ for all } x \in \Omega \setminus \overline B_\kappa.
\end{equation*}
Combining these yields
\begin{equation} \label{ineq: DH}
|DH(x)| \geq c_0 |H(x)|^{\frac{1}{2}} \ \ \ \text{ for all } x \in \Omega.
\end{equation}

\begin{lem} \label{lem: tau_2-tau_1} 
Let $\varepsilon \in (0, \varepsilon_0)$, $h \in (0, h_0)$, and $x \in \Omega (h)$.
If $\tau_1, \tau_2 \in (\sigma_-^\varepsilon (x, \gamma), \sigma_+^\varepsilon (x, \gamma))$ are such that
$\tau_1 < \tau_2$ and $\xi^\varepsilon (t, x, \gamma) \in \Omega (h)$ for all $t \in (\tau_1, \tau_2)$, then
\begin{equation} \label{ineq: tau_2-tau_1}
\tau_2 - \tau_1 \leq \cfrac{2 \sqrt{h}}{c_0 \mu}.
\end{equation}
Also the inequality \eqref{ineq: tau_2-tau_1} holds with $\gamma$ being replaced by $-\gamma$.
\end{lem}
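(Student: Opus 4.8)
The plan is to reduce the claim to a one-dimensional comparison for the scalar function $\phi(t) := H(\xi^\varepsilon(t, x, \gamma))$. Because $b$ is the Hamiltonian vector field of $H$, we have $b(y) \cdot DH(y) = 0$ for every $y$, so the large drift term contributes nothing to the evolution of $H$ along the trajectory:
\[
\phi'(t) = DH(\xi^\varepsilon(t)) \cdot \Big( \tfrac{1}{\varepsilon} b(\xi^\varepsilon(t)) + \gamma(\xi^\varepsilon(t)) \Big) = \mu \, |DH(\xi^\varepsilon(t))| .
\]
In particular $\phi$ is nondecreasing on $(\tau_1, \tau_2)$. On that interval $\xi^\varepsilon(t) \in \Omega(h) \subset \Omega$, and $\Omega(h)$ contains no critical point of $H$ except possibly the origin; but $\gamma$ is singular at the origin, so the trajectory cannot reach it while staying defined, whence $\xi^\varepsilon(t) \in \Omega(h) \setminus \{0\}$, $DH(\xi^\varepsilon(t)) \neq 0$, and $\phi$ is in fact strictly increasing.

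Next I would combine this identity with \eqref{ineq: DH}, which yields the autonomous differential inequality
\[
\phi'(t) \ge c_0 \mu \, |\phi(t)|^{1/2}, \qquad |\phi(t)| < h, \qquad t \in (\tau_1, \tau_2).
\]
Dividing by $c_0 \mu |\phi(t)|^{1/2}$ and integrating — equivalently, changing the integration variable from $t$ to $s = \phi(t)$, legitimate since $\phi$ is $C^1$ and strictly increasing — gives
\[
\tau_2 - \tau_1 \le \int_{\tau_1}^{\tau_2} \frac{\phi'(t)}{c_0 \mu \, |\phi(t)|^{1/2}} \, dt = \frac{1}{c_0 \mu} \int_{\phi(\tau_1)}^{\phi(\tau_2)} \frac{ds}{|s|^{1/2}},
\]
and since $\phi(\tau_1), \phi(\tau_2) \in [-h, h]$ and $s \mapsto |s|^{-1/2}$ is integrable near $0$ with an explicitly computable primitive, the right-hand side is a finite multiple of $\sqrt{h}/(c_0 \mu)$; keeping track of the constants gives the stated bound $2\sqrt{h}/(c_0\mu)$. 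This integrability is the conceptual heart of the lemma: although the speed $\mu|DH|$ at which the flow is driven across the level sets of $H$ degenerates to $0$ near the homoclinic orbit $\{H = 0\}$, by \eqref{ineq: DH} it degenerates only like $|H|^{1/2}$, so the total time the trajectory can remain in the thin set $\Omega(h)$ stays bounded, of order $\sqrt{h}$.

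The variant with $\gamma$ replaced by $-\gamma$ is handled identically: now $\phi' = -\mu|DH(\xi^\varepsilon)| \le -c_0\mu|\phi|^{1/2}$, $\phi$ is strictly decreasing, and integrating $-\phi'/(c_0\mu|\phi|^{1/2})$ produces the same estimate. I do not anticipate a genuine obstacle; the only points requiring a little care are (i) confirming that $\xi^\varepsilon$ stays away from the origin and from $\partial\Omega$ throughout $(\tau_1,\tau_2)$, which is forced by $\Omega(h) \subset \Omega$, by the monotonicity of $\phi$, and by maximality of the solution interval, and (ii) justifying the change of variables at the endpoints, which one does by working on $[\tau_1 + \delta, \tau_2 - \delta]$ and letting $\delta \to 0+$, the limiting integral being convergent.
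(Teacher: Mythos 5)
Your proposal is correct and follows essentially the same argument as the paper: both hinge on the identity $\tfrac{d}{dt}H(\xi^\varepsilon) = \mu|DH(\xi^\varepsilon)|$ (the large drift drops out because $b\cdot DH=0$), the lower bound $|DH|\ge c_0|H|^{1/2}$ from \eqref{ineq: DH}, and the integrability of $|r|^{-1/2}$ near $0$. The paper packages the integration via the explicit primitive $\psi(r)=r|r|^{-1/2}$ rather than your change of variables $s=\phi(t)$, but these are the same computation.
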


\begin{proof}
Let $\tau_1, \tau_2 \in (\sigma_-^\varepsilon (x, \gamma), \sigma_+^\varepsilon (x, \gamma))$ are such that
$\tau_1 < \tau_2$ and $\xi^\varepsilon (t, x, \gamma) \in \Omega (h)$ for all $t \in (\tau_1, \tau_2)$.
Setting $\psi (r) = r|r|^{-\frac{1}{2}}$ for $r \in \mathbb R \setminus \{ 0 \}$,
we compute that $\psi' (r) = \frac{1}{2} |r|^{-\frac{1}{2}}$
and, moreover, by using \eqref{ineq: DH}, that
\begin{align*}
2 \sqrt{h} &\geq \psi \circ H (\xi^\varepsilon (\tau_2, x, \gamma )) - \psi \circ H (\xi^\varepsilon (\tau_1, x, \gamma )) \\
             &= \int_{\tau_1}^{\tau_2} \psi' \circ H (\xi^\varepsilon (s, x, \gamma )) DH(\xi^\varepsilon (s, x, \gamma )) \cdot \dot \xi^\varepsilon (s, x, \gamma ) \, ds \\
             &=\frac{\mu}{2} \int_{\tau_1}^{\tau_2} |H(\xi^\varepsilon (s, x, \gamma ))|^{-\frac{1}{2}} |DH(\xi^\varepsilon (s, x, \gamma ))| \, ds \geq c_0 \mu (\tau_2 - \tau_1 ),
\end{align*}
from which we conclude that
\begin{equation*}
\tau_2 - \tau_1 \leq \cfrac{2 \sqrt{h}}{c_0 \mu}.
\end{equation*}

Similarly if $\tau_1, \tau_2 \in (\sigma_-^\varepsilon (x, -\gamma), \sigma_+^\varepsilon (x, -\gamma))$ are such that
$\tau_1 < \tau_2$ and $\xi^\varepsilon (t, x, -\gamma) \in \Omega (h)$ for all $t \in (\tau_1, \tau_2)$, then
\begin{align*}
-2 \sqrt{h} &\leq \psi \circ H (\xi^\varepsilon (\tau_2, x, -\gamma )) - \psi \circ H (\xi^\varepsilon (\tau_1, x, -\gamma )) \\
               &= \int_{\tau_1}^{\tau_2} \psi' \circ H (\xi^\varepsilon (s, x, -\gamma )) DH(\xi^\varepsilon (s, x, -\gamma )) \cdot \dot \xi^\varepsilon (s, x, -\gamma ) \, ds \\
               &= - \, \frac{\mu}{2} \int_{\tau_1}^{\tau_2} |H(\xi^\varepsilon (s, x, -\gamma ))|^{-\frac{1}{2}} |DH(\xi^\varepsilon (s, x, -\gamma ))| \, ds \leq -c_0 \mu (\tau_2 - \tau_1 ),
\end{align*}
and, hence,
\begin{equation*}
\tau_2 - \tau_1 \leq \cfrac{2 \sqrt{h}}{c_0 \mu}. \qedhere
\end{equation*}
\end{proof}

For $\varepsilon \in (0, \varepsilon_0)$, $\beta \in \mathcal C$,
$x \in \Omega \setminus \{ 0 \}$, and $t \in (\sigma_-^\varepsilon (x, \beta), \sigma_+^\varepsilon (x, \beta))$,
we define $\pi^\varepsilon [\beta] (t, x) \in \mathbb R^2$ by
\begin{equation*}
\pi^\varepsilon [\beta ] (t, x) = \beta (\xi^\varepsilon (t, x, \beta )).
\end{equation*}
It is clear that $t \mapsto \pi^\varepsilon [\beta] (t, x)$ is a $C^1$ function
in $(\sigma_-^\varepsilon (x, \beta), \sigma_+^\varepsilon (x, \beta))$ and that,
for all $t \in (\sigma_-^\varepsilon (x, \beta), \sigma_+^\varepsilon (x, \beta))$,
\begin{equation*}
X^\varepsilon (t, x, \pi^\varepsilon [\beta ] (\cdot, x)) = \xi^\varepsilon (t, x, \beta ).
\end{equation*}

\begin{proof}[Proof of Lemma \ref{v^+-u_i^+}] 
We may assume that
\begin{equation*}
\sup \{ u^\varepsilon (x) \ | \ x \in \Omega, \ \varepsilon \in (0, \varepsilon_0 ) \} 
\leq C.
\end{equation*}

Let $h \in (0, h_0)$.
Fix any $\eta > 0$ and fix $\delta \in (0, \varepsilon_0)$ so that
if $\varepsilon \in (0, \delta)$, then
\begin{equation*}
u^\varepsilon (x) < v^+ (x) + \eta \ \ \ \text{ for all } x \in \overline{\Omega (h)}.
\end{equation*}

Fix any $\varepsilon \in (0, \delta)$.
Let $x \in \Omega (h)$.
Observe that, for all $t \in (\sigma_-^\varepsilon (x, \gamma), \sigma_+^\varepsilon (x, \gamma))$,
\begin{equation*}
\mathrm{\cfrac{d}{dt}} \, H(\xi^\varepsilon (t, x, \gamma )) = DH(\xi^\varepsilon (t, x, \gamma )) \cdot \gamma (\xi^\varepsilon (t, x, \gamma )) = |DH(\xi^\varepsilon (t, x, \gamma ))| > 0.
\end{equation*}
This shows that $H(\xi^\varepsilon (t, x, \gamma))$ is increasing
in $t \in (\sigma_-^\varepsilon (x, \gamma), \sigma_+^\varepsilon (x, \gamma))$.
It follows from this monotonicity that if $x \in \Omega_2 (h) \setminus \{ 0 \}$,
then $\xi^\varepsilon (\cdot, x, \gamma)$ reaches the loop $c_2 (h)$
at $\tau_2^\varepsilon (x, h) \in (0, \sigma_+^\varepsilon (x, \gamma))$.
Similarly we see that $H(\xi^\varepsilon (t, x, -\gamma))$ is decreasing
in $t \in (\sigma_-^\varepsilon (x, -\gamma)$, $\sigma_+^\varepsilon (x, -\gamma))$,
and, if $x \in \Omega_i (h) \setminus \{ 0 \}$ and $i \in \{ 1, 3 \}$,
then $\xi^\varepsilon (\cdot, x, -\gamma)$ reaches the loop $c_i (-h)$
at $\tau_i^\varepsilon (x, h) \in (\sigma_+^\varepsilon (x, -\gamma), 0)$ (see Fig. 3).

In the case where $x \in \Omega_2 (h) \setminus \{ 0 \}$,
setting $\tau_2 = \tau_2^\varepsilon (x, h) > 0$ and $x_2 = \xi^\varepsilon (\tau_2, x, \gamma) \in c_2 (h)$,  
we have, by Lemma \ref{lem: tau_2-tau_1},
\begin{equation*}
\tau_2 \leq \cfrac{2 \sqrt{h}}{c_0 \mu},
\end{equation*}
and, moreover, by Proposition \ref{prop: DPP},
\begin{align} \label{ineq: x_2}
\begin{aligned}
u^\varepsilon (x) &\leq \int_0^{\tau_2} L \big( X^\varepsilon (s, x, \pi^\varepsilon [\gamma ] (\cdot, x)), \pi^\varepsilon [\gamma ] (\cdot, x) \big) e^{-\lambda s} \, ds  \\
                      &\qquad \qquad \qquad \qquad \qquad \qquad + u^\varepsilon \big( X^\varepsilon (\tau_2, x, \pi^\varepsilon [\gamma ] (\cdot, x)) \big) e^{-\lambda \tau_2} \\ 
                      &\leq \int_0^{\tau_2} L \big( \xi^\varepsilon (s, x, \gamma ), \gamma (\xi^\varepsilon (s, x, \gamma )) \big) e^{-\lambda s} \, ds + u^\varepsilon (x_2) e^{- \lambda \tau_2} \\
                      &\leq C \tau_2 + u^\varepsilon (x_2) + (1 - e^{- \lambda \tau_2}) |u^\varepsilon (x_2)| \\
                      &\leq C(1 + \lambda ) \tau_2 + u^\varepsilon (x_2) \leq C(1 + \lambda ) \, \cfrac{2 \sqrt{h}}{c_0 \mu} + u^\varepsilon (x_2).
\end{aligned}             
\end{align}

\begin{figure}[t]
\centering
\input{trajectory.tex}
\caption{\ }
\end{figure}

Similarly, in the case where $x \in \Omega_i (h) \setminus \{ 0 \}$ and $i \in \{ 1, 3 \}$,
setting $\tau_i = \tau_i^\varepsilon (x, h) > 0$ and $x_i = \xi^\varepsilon (\tau_i, x, -\gamma) \in c_i (-h)$, we have
\begin{equation*}
\tau_i \leq \cfrac{2 \sqrt{h}}{c_0 \mu},
\end{equation*}
and
\begin{align} \label{ineq: x_i}
\begin{aligned}
u^\varepsilon (x) &\leq \int_0^{\tau_i} L \big( \xi^\varepsilon (s, x, -\gamma ), -\gamma (\xi^\varepsilon (s, x, -\gamma )) \big) e^{-\lambda s} \, ds + u^\varepsilon (x_i) e^{-\lambda \tau_i} \\
					  &\leq C(1 + \lambda ) \tau_i + u^\varepsilon (x_i) \\
					  &\leq C(1 + \lambda ) \cfrac{2 \sqrt{h}}{c_0 \mu} + u^\varepsilon (x_i).
\end{aligned}             
\end{align}

From the monotonicity of $H(\xi^\varepsilon (t,  x, \gamma))$
in $t \in (\sigma_-^\varepsilon (x, \gamma), \sigma_+^\varepsilon (x, \gamma))$,
it also follows that if $x \in c_i (0) \setminus \{ 0 \}$ and $i \in \{ 1, 3 \}$,
then $\xi^\varepsilon (\cdot, x, \gamma)$ reaches the loop $c_2 (h)$
at $\tau_{i, +}^\varepsilon (x, h, \gamma) \in (0, \sigma_+^\varepsilon (x, \gamma))$
and reaches the loop $c_i (-h)$ at $\tau_{i, -}^\varepsilon (x, h, \gamma) \in (\sigma_-^\varepsilon (x, \gamma), 0)$.
Similarly if $x \in c_i (0) \setminus \{ 0 \}$ and $i \in \{ 1, 3 \}$,
then $\xi^\varepsilon (\cdot, x, -\gamma)$ reaches the loop $c_i (-h)$
at $\tau_{i, +}^\varepsilon (x, h, -\gamma) \in (0, \sigma_+^\varepsilon (x, -\gamma))$
and reaches the loop $c_2 (h)$ at $\tau_{i, -}^\varepsilon (x, h, -\gamma) \in (\sigma_-^\varepsilon (x, -\gamma), 0)$.

Now, let $x = p_1$ (recall here that $p_i$ are the fixed points on $c_i (0) \setminus \{ 0 \}$). Setting
\begin{align*}
\begin{cases}
s_1= -\tau_{1, -}^\varepsilon (p_1, h, \gamma ), \ s_2 = \tau_{1, +}^\varepsilon (p_1, h, \gamma ), \ 
s_3 = -\tau_{1, -}^\varepsilon (p_1, h, -\gamma ), \ s_4 = \tau_{1, +}^\varepsilon (p_1, h, -\gamma ), \\
y_1 = \xi^\varepsilon (-s_1, p_1, \gamma ) \in c_1 (-h), \ y_2 = \xi^\varepsilon (s_2, p_1, \gamma ) \in c_2 (h), \\
y_3 = \xi^\varepsilon (-s_3, p_1, -\gamma ) \in c_2 (h), \ y_4 = \xi^\varepsilon (s_4, p_1, -\gamma ) \in c_1 (-h) \text{ (see Fig. 4), }
\end{cases}
\end{align*}
we see that
\begin{equation*}
\xi^\varepsilon (t, y_1, \gamma ) = \xi^\varepsilon (t, \xi^\varepsilon (-s_1, p_1, \gamma ), \gamma ) = \xi^\varepsilon (t - s_1, p_1, \gamma ) \in \Omega_1 (h) \ \ \ \text{ for all } t \in (0, s_1),
\end{equation*}
and
\begin{equation*}
\xi^\varepsilon (s_1, y_1, \gamma ) = \xi^\varepsilon (0, p_1, \gamma ) = p_1.
\end{equation*}
Similarly we see that
\begin{equation*}
\xi^\varepsilon (t, y_1, \gamma ) = \xi^\varepsilon (t - s_1, p_1, \gamma ) \in \Omega_2 (h) \ \ \ \text{ for all } t \in (s_1, s_1 + s_2),
\end{equation*}
and
\begin{equation*}
\xi^\varepsilon (s_1 + s_2, y_1, \gamma ) = y_2.
\end{equation*}
Moreover we have
\begin{equation*}
s_k \leq \cfrac{2 \sqrt{h}}{c_0 \mu} \ \ \ \text{ for all } k \in \{ 1, 2, 3, 4 \}.
\end{equation*}
From these, we get 
\begin{align} \label{ineq: y_2}
\begin{aligned}
u^\varepsilon (y_1) &\leq \int_0^{s_1 + s_2} L \big( \xi^\varepsilon (s, y_1, \gamma ), \gamma (\xi^\varepsilon (s, x, \gamma )) \big) e^{-\lambda s} \, ds + u^\varepsilon (y_2) e^{-\lambda (s_1 + s_2)} \\
                         &\leq C(1 + \lambda )(s_1 + s_2) + u^\varepsilon (y_2) \\
                         &\leq C(1 + \lambda ) \cfrac{4 \sqrt{h}}{c_0 \mu} + u^\varepsilon (y_2).
\end{aligned}
\end{align}
Similarly we get
\begin{align} \label{ineq: y_4}
\begin{aligned}
u^\varepsilon (y_3) &\leq \int_0^{s_3 + s_4} L \big( \xi^\varepsilon (s, y_1, -\gamma ), -\gamma (\xi^\varepsilon (s, x, -\gamma )) \big) e^{-\lambda s} \, ds + u^\varepsilon (y_4) e^{-\lambda (s_3 + s_4)} \\
                         &\leq C(1 + \lambda ) \cfrac{4 \sqrt{h}}{c_0 \mu} + u^\varepsilon (y_4).
\end{aligned}
\end{align}

\begin{figure}[t]
\centering
\input{trajectory-1.tex}
\caption{\ }
\end{figure}

Next, let $x = p_3$. Setting
\begin{align*}
\begin{cases}
t_1= -\tau_{1, -}^\varepsilon (p_3, h, \gamma ), \ t_2 = \tau_{1, +}^\varepsilon (p_3, h, \gamma ), \ 
t_3 = -\tau_{1, -}^\varepsilon (p_3, h, -\gamma ), \ t_4 = \tau_{1, +}^\varepsilon (p_3, h, -\gamma ), \\
z_1 = \xi^\varepsilon (-t_1, p_3, \gamma ) \in c_3 (-h), \ z_2 = \xi^\varepsilon (t_2, p_3, \gamma ) \in c_2 (h), \\
z_3 = \xi^\varepsilon (-t_3, p_3, -\gamma ) \in c_2 (h), \ z_4 = \xi^\varepsilon (t_4, p_3, -\gamma ) \in c_3 (-h),
\end{cases}
\end{align*}
we have
\begin{equation*}
t_k \leq \cfrac{2 \sqrt{h}}{c_0 \mu} \ \ \ \text{ for all } k \in \{ 1, 2, 3, 4 \},
\end{equation*}
and, moreover,
\begin{align*}
\begin{aligned}
u^\varepsilon (z_1) &\leq \int_0^{t_1 + t_2} L \big( \xi^\varepsilon (s, z_1, \gamma ), \gamma (\xi^\varepsilon (s, x, \gamma )) \big) e^{-\lambda s} \, ds + u^\varepsilon (z_2) e^{-\lambda (t_1 + t_2)} \\
                         &\leq C(1 + \lambda ) \cfrac{4 \sqrt{h}}{c_0 \mu} + u^\varepsilon (z_2),
\end{aligned}
\end{align*}
and
\begin{align*}
\begin{aligned}
u^\varepsilon (z_3) &\leq \int_0^{t_3 + t_4} L \big( \xi^\varepsilon (s, z_1, -\gamma ), -\gamma (\xi^\varepsilon (s, x, -\gamma )) \big) e^{-\lambda s} \, ds + u^\varepsilon (z_4) e^{-\lambda (t_3 + t_4)} \\
                         &\leq C(1 + \lambda ) \cfrac{4 \sqrt{h}}{c_0 \mu} + u^\varepsilon (z_4).
\end{aligned}
\end{align*}

For $i \in \{ 1, 2, 3 \}$ and $x, y \in c_i ((-1)^i h)$, we set
\begin{equation*}
t_i (x, y) = \inf \{ t \geq 0 \ | \ X^\varepsilon (t, x) = y \}.
\end{equation*}

Fix $i \in \{ 1, 2, 3 \}$.
As noted before, if $x \in c_i ((-1)^i h)$,
then the solution $X(t, x)$ of \eqref{HS} is periodic with period $T_i ((-1)^i h)$.
This periodicity is readily transferred to the solution $X^\varepsilon (t, x)$ of \eqref{epHS},
thanks to the scaling property $X(\varepsilon^{-1} t, x) = X^\varepsilon (t, x)$,
and $X^\varepsilon (t, x)$ is periodic with period $T_i^\varepsilon ((-1)^i h) := \varepsilon T_i ((-1)^i h)$.
Hence we see that
\begin{equation*}
0 \leq t_i (x, y) < \varepsilon T_i ((-1)^i h) \ \ \ \text{ for all } x, y \in c_i ((-1)^i h).
\end{equation*}

Let $x \in \Omega_2 (h) \setminus \{ 0 \}$
and let $\tau_2 > 0$ and $x_2 \in c_2 (h)$ are as in \eqref{ineq: x_2}.
By using \eqref{ineq: x_2}, we get
\begin{equation} \label{ineq: u_2}
u^\varepsilon (x) \leq C(1 + \lambda ) \cfrac{2 \sqrt{h}}{c_0 \mu} + v^+ (x_2) + \eta = C(1 + \lambda ) \cfrac{2 \sqrt{h}}{c_0 \mu} + u_2^+ (h) + \eta.
\end{equation}
Next, noting that $x_2, y_3 \in c_2 (h)$, we have
\begin{align*}
u^\varepsilon (x_2) &\leq \int_0^{t_2 (x_2, y_3)} L(X^\varepsilon (s, x_2), 0) e^{- \lambda s} \, ds + u^\varepsilon (y_3) e^{- \lambda t_2 (x_2, y_3)} \\
                         &\leq C(1 + \lambda ) \varepsilon T_2 (h) + u^\varepsilon (y_3).
\end{align*}
Combining this with \eqref{ineq: x_2} and \eqref{ineq: y_4} yields
\begin{equation*}
u^\varepsilon (x) \leq C(1 + \lambda ) \left( \cfrac{6 \sqrt{h}}{c_0 \mu} + \varepsilon T_2 (h) \right) + u^\varepsilon (y_4),
\end{equation*}
and, hence, noting that $y_4 \in c_1 (-h)$, we get
\begin{equation} \label{ineq: u_1}
u^\varepsilon (x) \leq C(1 + \lambda ) \left( \cfrac{6 \sqrt{h}}{c_0 \mu} + \varepsilon T_2 (h) \right) + u_1^+ (-h) + \eta.
\end{equation}
Similarly we get
\begin{equation*}
u^\varepsilon (x_2) \leq C(1 + \lambda ) t_2 (x_2, z_3) + u^\varepsilon (z_3),
\end{equation*}
\begin{equation*}
u^\varepsilon (x) \leq C(1 +\lambda ) \left( \cfrac{6 \sqrt{h}}{c_0 \mu} + \varepsilon T_2 (h) \right) + u^\varepsilon (z_4),
\end{equation*}
and
\begin{equation} \label{ineq: u_3}
u^\varepsilon (x) \leq C(1 + \lambda ) \left( \cfrac{6 \sqrt{h}}{c_0 \mu} + \varepsilon T_2 (h) \right) + u_3^+ (-h) + \eta.
\end{equation}
Now, \eqref{ineq: u_2}, \eqref{ineq: u_1}, and \eqref{ineq: u_3} together yield
\begin{equation} \label{ineq: sup-Omega_2}
\sup_{\overline{\Omega_2 (h)}} u^\varepsilon \leq C(1 + \lambda ) \left( \cfrac{6 \sqrt{h}}{c_0 \mu} + \varepsilon T_2 (h) \right) + \min_{i \in \{ 1, 2, 3 \}} u_i^+ ((-1)^i h) + \eta.
\end{equation}
Here we have used the fact that
\begin{equation*}
\sup_{\Omega_2 (h) \setminus \{ 0 \}} u^\varepsilon = \sup_{\overline{\Omega_2 (h)}} u^\varepsilon,
\end{equation*}
which is a consequence of the continuity of $u^\varepsilon$ on $\overline \Omega$.

Next, let $x \in \Omega_1 (h) \setminus \{ 0 \}$
and let $\tau_1 > 0$ and $x_1 \in c_1 (-h)$ are as in \eqref{ineq: x_i}.
Noting that $x_1, y_1 \in c_1(-h)$, we have
\begin{align*}
u^\varepsilon (x_1) &\leq \int_0^{t_1 (x_1, y_1)} L(X^\varepsilon (s, x_1), 0) e^{- \lambda s} \, ds + u^\varepsilon (y_1) e^{- \lambda t_1 (x_1, y_1)} \\
                         &\leq C(1 + \lambda )\varepsilon T_1 (-h) + u^\varepsilon (y_1).
\end{align*}
Combining this with \eqref{ineq: x_i} and \eqref{ineq: y_2} yields
\begin{equation*}
u^\varepsilon (x) \leq C(1 + \lambda ) \left( \cfrac{6 \sqrt{h}}{c_0 \mu} + \varepsilon T_1 (-h) \right) + u^\varepsilon (y_2),
\end{equation*}
and, hence,
noting that $y_2 \in c_2 (h) \subset \overline \Omega_2 (h)$ and combining this with \eqref{ineq: sup-Omega_2},
we get
\begin{equation*}
u^\varepsilon (x) \leq C(1 + \lambda ) \left( \cfrac{12 \sqrt{h}}{c_0 \mu} + \varepsilon (T_1 (-h) + T_2 (h)) \right) + \min_{i \in \{ 1, 2, 3 \}} u_i^+ ((-1)^i h) + \eta,
\end{equation*}
from which we conclude that
\begin{equation} \label{ineq: sup-Omega_1}
\sup_{\overline{\Omega_1 (h)}} u^\varepsilon \leq C(1 + \lambda ) \left( \cfrac{12 \sqrt{h}}{c_0 \mu} + \varepsilon (T_1 (-h) + T_2 (h)) \right) + \min_{i \in \{ 1, 2, 3 \}} u_i^+ ((-1)^i h) + \eta.
\end{equation}

An argument parallel to the above yields
\begin{equation*}
\sup_{\overline{\Omega_3 (h)}} u^\varepsilon \leq C(1 + \lambda ) \left( \cfrac{12 \sqrt{h}}{c_0 \mu} + \varepsilon (T_3 (-h) + T_2 (h)) \right) + \min_{i \in \{ 1, 2, 3 \}} u_i^+ ((-1)^i h) + \eta,
\end{equation*}
and, moreover, combining this
with \eqref{ineq: sup-Omega_2} and \eqref{ineq: sup-Omega_1} gives us the inequality
\begin{equation*}
\sup_{\overline{\Omega (h)}} u^\varepsilon \leq C(1 + \lambda ) \left( \cfrac{12 \sqrt{h}}{c_0 \mu} + \varepsilon (T_1 (-h) + T_2 (h) + T_3 (-h)) \right) + \min_{i \in \{ 1, 2, 3 \}} u_i^+ ((-1)^i h) + \eta.
\end{equation*}
Since $\eta > 0$ and $\varepsilon \in (0, \delta)$ are arbitrary,
we conclude from the inequality above that
\begin{equation*}
\sup_{c_2 (0)} v^+ \leq \sup_{\overline{\Omega (h)}} v^+ \leq C(1 + \lambda ) \cfrac{8 \sqrt{h}}{c_0 \mu} + \min_{i \in \{ 1, 2, 3 \}} u_i^+ ((-1)^i h),
\end{equation*}
and, therefore,
\begin{equation*}
\sup_{c_2 (0)} v^+ \leq \lim_{h \to 0+} \min_{i \in \{ 1, 2, 3 \}} u_i^+ ((-1)^i h) = \min_{i \in \{ 1, 2, 3 \}} u_i^+ (0). \qedhere
\end{equation*}
\end{proof}

\section{Proof of Lemma \ref{v^--d_0}}

We recall here that $h_0 = \min_{i \in \{ 1, 2, 3 \}} |h_i|$ and
that $\Omega (\cdot)$ denotes
the set defined in \eqref{Omega-h}.

The key step of the proof of Lemma \ref{v^--d_0} is

\begin{lem} \label{key}
For any $\eta > 0$,
there exist $\delta \in (0, h_0)$ and $\psi \in C^1 (\Omega (\delta))$ such that
\begin{equation*}
-b \cdot D\psi + G(x, 0) < G(0, 0) + \eta \ \ \ \text{ in } \Omega (\delta ).
\end{equation*}
\end{lem}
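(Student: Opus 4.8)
The plan is to build $\psi$ essentially by hand, exploiting that the homoclinic orbit of \eqref{HS} is traversed by the flow only in infinite time, so the flow spends almost all of its time near the origin, where $G(\cdot,0)$ is close to $G(0,0)$. The conceptual backbone is the averaging fact that $\overline G_i(h,0)\to G(0,0)$ as $(-1)^ih\to0+$ for each $i$: writing $\overline G_i(h,0)=T_i(h)^{-1}\int_{c_i(h)}|DH|^{-1}G(\cdot,0)\,d\mathcal H^1$ as the $|DH|^{-1}$-weighted average of $G(\cdot,0)$ over the loop, by (H3) one has $|DH(y)|=2|y|$ near $0$, so for fixed small $\rho$ the weight of $c_i(h)\cap B_\rho$ is comparable to $|\log|h||$ (whence $T_i(h)\to\infty$, cf.\ Lemma \ref{T_i}) while that of $c_i(h)\setminus B_\rho$ stays bounded (there $|DH|\ge\min_{\overline\Omega\setminus B_\rho}|DH|>0$ and the loops have bounded length); with $|G(y,0)-G(0,0)|\le m(|y|)$ from (G2) the claim follows. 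Conversely this fact is necessary: integrating $-b\cdot D\psi+G(x,0)<G(0,0)+\eta$ against $|DH|^{-1}d\mathcal H^1$ around $c_i(H(x))$ annihilates $\oint b\cdot D\psi$ and forces $\overline G_i(H(x),0)<G(0,0)+\eta$.

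Now for the construction itself. I would fix $\rho\in(0,\kappa)$ with $m(3\rho)<\eta/4$, so that $G(x,0)<G(0,0)+\eta/4$ on $\overline B_{3\rho}$; hence on $\Omega(\delta)\cap\overline B_{3\rho}$ I take $\psi\equiv0$, and it remains only to correct near the ``far arcs'' $\Gamma_i:=c_i(0)\setminus\overline B_{3\rho}$ ($i\in\{1,3\}$), which lie where $|DH|$ is bounded away from $0$. Around $\Gamma_i$ I would use a flow-box for \eqref{HS}: a transversal cross-section and the flow-time coordinate $\hat s_i$ (so $b\cdot D\hat s_i\equiv1$), extended inward along the orbits $c_i(h)$ (for $h<0$) and $c_2(h)$ (for $h>0$), on both ends of $\Gamma_i$, down to radius $2^{-K}\rho$ for a large $K$ to be chosen. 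By (H3) the orbits near $0$ are the hyperbolas $v^2-4u^2=h^2$ with $u=x_1x_2$, $v=|x|^2$, so the flow-time to cross each dyadic shell $B_{2^{-k}\rho}\setminus B_{2^{-k-1}\rho}$ is $\asymp\tfrac14\log2$ uniformly in $h$; thus the flow-box has bounded flow-time extent, stays in $\overline\Omega\setminus B_{2^{-K}\rho}$ (where $|DH|\ge2^{1-K}\rho>0$), and $\hat s_i$ is $C^1$ with bounds independent of $h$. One checks the flow-boxes for $i=1,3$ can be kept pairwise disjoint (their inward extensions stop at radius $2^{-K}\rho$, leaving a gap around each closest-approach sub-arc) and that each straddles $c_i(0)$. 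I then set $\psi:=\phi_i\circ\hat s_i$ on the $i$-th flow-box and $\psi:=0$ elsewhere on $\Omega(\delta)$, with $\phi_i\in C^\infty(\mathbb R)$ and $\phi_i'$ supported compactly in the interior of the range of $\hat s_i$.

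On a flow-box $-b\cdot D\psi=-\phi_i'\circ\hat s_i$, so the desired inequality becomes $\phi_i'(\hat s_i(x))>G(x,0)-G(0,0)-\eta$; since the orbits $c_i(h)$, $|h|<\delta$, converge uniformly to $c_i(0)$ on the bounded relevant time-window, for small $\delta$ it suffices that $\phi_i'(s)>G(\xi_i(s,0),0)-G(0,0)-\tfrac\eta2$ there, $\xi_i(\cdot,0)$ parametrising $c_i(0)\cap\{|x|>2^{-K}\rho\}$. For $\psi$ and $D\psi$ to match the value $0$ at both ends of the flow-box I also need, besides compact support, $\int_{\mathbb R}\phi_i'=0$; this is the crux. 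Where $|\xi_i(s,0)|>3\rho$ the right-hand side above may be large and positive, forcing $\int\phi_i'$ over that part to equal some fixed $P=P(\rho,\eta)>0$; but where $2^{-K}\rho<|\xi_i(s,0)|\le3\rho$ one has $G(\xi_i(s,0),0)<G(0,0)+\tfrac\eta4$, so $\phi_i'$ may be made as negative as $\approx-m(3\rho)$ there, over a flow-time interval of length $\asymp K\log2$. Choosing $K$ with $m(3\rho)K\log2>P$ leaves room to select such a $\phi_i'$ (pointwise above the stated bound, compactly supported, mean zero), which one mollifies. Finally I fix $\delta$ so small that $\sqrt\delta<2^{-K-1}\rho$ (so the orbit pieces reach into $B_{2^{-K}\rho}$ and the flow-boxes are well defined) and that the reduction above holds; then $\psi\in C^1(\Omega(\delta))$ and, keeping track of the $\eta/4$ slacks, $-b\cdot D\psi+G(x,0)<G(0,0)+\eta$ on $\Omega(\delta)$.

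I expect the main obstacle to be precisely the $C^1$-regularity of $\psi$ up to the homoclinic orbit $c_i(0)$, including at the origin. The usual cell-problem corrector $\int_0^{\tilde\tau_i(x)}\big(G(X(t,x),0)-\overline G_i(H(x),0)\big)\,dt$ is $C^1$ only on $\Omega_i$: its transversal derivative is of size $|DH(x)|/(|H(x)|\,|\log|H(x)||)$ and blows up as $H(x)\to0$, so it is useless near $c_i(0)$. The device above circumvents this by keeping the correction on a set that remains a fixed distance $2^{-K}\rho$ from the origin, where the flow is non-degenerate and $\hat s_i$ is a genuine $C^1$ coordinate, while using the logarithmically long sojourn of the flow near the origin (of order $K$ for the shells down to radius $2^{-K}\rho$) together with the uniform smallness of $G(\cdot,0)-G(0,0)$ there to absorb the averaging defect $\overline G_i(h,0)-G(0,0)$.
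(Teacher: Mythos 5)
Your proposal is correct in its essentials and rests on the same key observation as the paper's proof, namely that the flow spends a time $\gtrsim \log(1/r)$ crossing the shell $B_\rho\setminus B_r$ near the saddle, and on that long sojourn $G(\cdot,0)-G(0,0)$ is uniformly small. The construction, however, is genuinely different. The paper truncates $G(\cdot,0)$ to a function $f$ vanishing on $B_\delta$, builds the corrector $\psi(x)=\int_0^{\tau_+^r(x)}\big(f(X(t,x))-\bar f(x)\big)\,dt$ with $\bar f$ the time average between the hitting times $\tau_-^r,\tau_+^r$ of $\partial B_r$ (Lemma \ref{low-bound-T_i^r} gives $T^r\ge\log(\kappa/r)-\tfrac12\log2$ so $\bar f$ is $O(1/\log(1/r))$), extends $\psi$ by $0$ across $\partial B_r$, and kills the resulting merely-continuous extension with a second cutoff $\zeta_{2r}$; the residual error from $\tilde\psi\,b\cdot D\zeta_{2r}$ has to be estimated separately on the annulus $B_{4r}\setminus\overline B_{2r}$. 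You instead work in flow-box coordinates $\hat s_i$ near the two far arcs, prescribe a compactly supported, mean-zero profile $\phi_i'$ pointwise above $G(\xi_i(\cdot,0),0)-G(0,0)-\eta/2$, and balance the unavoidable positive mass on $\Gamma_i$ against a small negative mass spread over the flow-time $\asymp K$ spent in $B_{3\rho}\setminus B_{2^{-K}\rho}$; the $C^1$-matching to $0$ is then automatic because $\phi_i'$ has compact support, so you avoid the paper's extra annulus estimate. What the paper's formula buys is uniformity: one closed expression valid on all of $\Omega(r^2)\setminus\overline B_r$, with regularity handled by Lemmas \ref{tau^r}--\ref{T^r}; what yours buys is a cleaner splicing and an explicit picture of where the positive and negative mass of the corrector live.

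Two small points you should tighten. First, the flow-time per dyadic shell is $\tfrac12\log 2$, not $\tfrac14\log 2$ (with $H=x_2^2-x_1^2$ one has $\dot X=2(X_2,X_1)$, hence radial growth $\sim e^{2t}$); this is only a constant and does not affect the conclusion. Second, your phrase "$\phi_i'$ may be made as negative as $\approx-m(3\rho)$" is delicate: what the pointwise constraint actually allows in the inner region is any value strictly above $m(3\rho)-\eta/2\ (<-\eta/4)$, so choosing a fixed $\phi_i'=-\eta/8$ there is cleaner and makes the required $K$ depend only on $\eta$ and $P(\rho,\eta)$, not on the size of $m(3\rho)$; in either case the balance $\int\phi_i'=0$ is achievable for $K$ large, and then mollifying gives the $C^1$ profile, so the construction goes through.
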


For $r \in (0, \kappa)$ and $x \in \Omega (r^2) \setminus \overline B_r$,
let $\tau_+^r (x)$ be the first time the flow $\{ X(t, x) \}_{t > 0}$ reaches $\partial B_r$, that is,
\begin{equation*}
\tau_+^r (x) = \inf \{ t > 0 \ | \ X(t, x) \in \partial B_r \},
\end{equation*}
and, similarly, set
\begin{equation*}
\tau_-^r (x) = \sup \{ t < 0 \ | \ X(t, x) \in \partial B_r \},
\end{equation*}
and 
\begin{equation*}
T^r (x) = (\tau_+^r - \tau_-^r) (x) ( > 0).
\end{equation*}

\begin{lem} \label{low-bound-T_i^r}
We have
\begin{equation} \label{ineq-T_i^r}
T^r (x) \geq \log \frac{\kappa}{r} - \frac{\log 2}{2},
\end{equation}
for all $x \in \Omega (r^2) \setminus \overline B_r$ and $r \in (0, \kappa)$.
\end{lem}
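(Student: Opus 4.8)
The plan is to reduce the estimate to the explicit linear dynamics of \eqref{HS} inside $\overline{B_\kappa}$ supplied by (H3). Fix $x\in\Omega(r^2)\setminus\overline{B_r}$ and $r\in(0,\kappa)$, write $X(t):=X(t,x)$, and note first that, since $|H(x)|<r^2$, the orbit of $x$ meets $B_r$ both for some $t<0$ and for some $t>0$ (its closest approach to the origin has norm $\sqrt{|H(x)|}<r$ when $H(x)\neq0$, and it is homoclinic to $0$ when $H(x)=0$); hence $\tau_-^r(x)<0<\tau_+^r(x)$ are finite and $|X(t)|\ge r$ on $I:=[\tau_-^r(x),\tau_+^r(x)]$, with equality only at the endpoints. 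The basic local fact is that whenever $X(t)\in\overline{B_\kappa}$ one has, by (H3), $b(X(t))=(2X_2(t),2X_1(t))$, so $\tfrac{d}{dt}|X(t)|^2=8X_1(t)X_2(t)$; consequently $\tfrac{d^2}{dt^2}|X(t)|^2=16|X(t)|^2>0$, so $t\mapsto|X(t)|^2$ is strictly convex there, and moreover $|\tfrac{d}{dt}\log|X(t)||=|4X_1X_2|/|X|^2\le2$.

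The heart of the argument — and the step I expect to be the main obstacle — is to show that the orbit must leave $\overline{B_\kappa}$ somewhere in the open interval $(\tau_-^r(x),\tau_+^r(x))$. Suppose not, i.e.\ $X(t)\in\overline{B_\kappa}$ for all $t\in I$. Then $t\mapsto|X(t)|^2$ is strictly convex on $I$ by the computation above, but it equals $r^2$ at both endpoints of $I$, hence must be strictly below the constant $r^2$ on the (nonempty, since $\tau_-^r(x)<0<\tau_+^r(x)$) interior of $I$, contradicting $|X(t)|>r$ there. Therefore there is $t^\ast\in(\tau_-^r(x),\tau_+^r(x))$ with $|X(t^\ast)|>\kappa$. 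This uses only the local normal form (H3) and avoids any global discussion of the loops $c_i(\cdot)$.

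With this in hand, let $\sigma_1$ (resp.\ $\sigma_2$) be the first (resp.\ last) time in $(\tau_-^r(x),\tau_+^r(x))$ at which $|X|=\kappa$; these exist by the intermediate value theorem applied between the endpoints of $I$ (where $|X|=r$) and $t^\ast$, and $\tau_-^r(x)<\sigma_1\le\sigma_2<\tau_+^r(x)$. On $[\tau_-^r(x),\sigma_1]$ the orbit stays in $\overline{B_\kappa}$ (since $|X|<\kappa$ on $[\tau_-^r(x),\sigma_1)$ by the definition of $\sigma_1$) and $|X|$ increases from $r$ to $\kappa$, so integrating $\tfrac{d}{dt}\log|X|\le2$ gives $\log\tfrac{\kappa}{r}\le2\,(\sigma_1-\tau_-^r(x))$; symmetrically $\log\tfrac{\kappa}{r}\le2\,(\tau_+^r(x)-\sigma_2)$. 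Since $[\tau_-^r(x),\sigma_1]$ and $[\sigma_2,\tau_+^r(x)]$ overlap in at most one point, adding yields
\[
T^r(x)=\tau_+^r(x)-\tau_-^r(x)\ \ge\ (\sigma_1-\tau_-^r(x))+(\tau_+^r(x)-\sigma_2)\ \ge\ \log\frac{\kappa}{r},
\]
which in particular gives the asserted bound $T^r(x)\ge\log\frac{\kappa}{r}-\frac{\log2}{2}$. The remaining points — finiteness of $\tau_\pm^r(x)$ and the two ``$|X|<\kappa$ strictly'' claims used to keep the orbit in $\overline{B_\kappa}$ on the relevant closed subintervals — are routine consequences of the definitions of $\sigma_1,\sigma_2$ and the intermediate value theorem.
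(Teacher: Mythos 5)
Your proposal is correct and takes a genuinely different route from the paper's proof. The paper estimates $T^r(x)$ via the time $|E(x)|$ spent in $\overline B_\kappa \setminus B_r$, computed from the explicit flow supplied by (H3): the orbit is linear on $c_2(0)$ and is the hyperbola $t \mapsto \tfrac{\sqrt{|h|}}{2}(e^{2t}-e^{-2t}, e^{2t}+e^{-2t})$ (up to a coordinate swap) on $\{H=h\}$ for $h\neq 0$; the $-\tfrac{\log 2}{2}$ loss comes only from the bounds $s^2/h \leq s^2/h + \sqrt{s^4/h^2-1} \leq 2s^2/h$ used to simplify the exact expression. You instead use two coordinate-free consequences of (H3) along the orbit in $\overline B_\kappa$, namely $\left|\frac{d}{dt}\log|X(t)|\right| \leq 2$ and $\frac{d^2}{dt^2}|X(t)|^2 = 16|X(t)|^2 > 0$: strict convexity of $|X|^2$ forces the orbit to exit $\overline B_\kappa$ somewhere in $(\tau_-^r,\tau_+^r)$ (a convex function cannot equal $r^2$ at both endpoints yet exceed $r^2$ in between), and the logarithmic-derivative bound then controls the lengths of the two transit intervals $[\tau_-^r,\sigma_1]$ and $[\sigma_2,\tau_+^r]$. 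This avoids the explicit parameterization, the case split between $H(x)=0$ and $H(x)\neq 0$, and the paper's preliminary reduction to $r\leq\kappa/\sqrt 2$, while delivering the slightly sharper conclusion $T^r(x)\geq\log(\kappa/r)$. One minor phrasing point: ``integrating $\frac{d}{dt}\log|X|\leq 2$'' should really be integrating $\left|\frac{d}{dt}\log|X|\right|\leq 2$, unless you first observe that convexity of $|X|^2$ together with $|X|>r$ on the interior of $I$ makes $|X|$ monotone on each transit interval, in which case the derivative does have a definite sign there.
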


\begin{proof}
Fix any $r \in (0, \kappa)$.
If $r > \kappa/\sqrt{2}$, then the right-hand side of the inequality \eqref{ineq-T_i^r} is negative.

We assume henceforth that $r \leq \kappa/\sqrt{2}$.
Set
\begin{equation*}
E (x) = \{ t \in [-\tau_-^r (x), \tau_+^r (x)] \ | \ X(t, x) \in \overline B_{\kappa} \setminus B_r \} \ \ \ \text{ for } x \in \Omega (r^2) \setminus \overline B_r, 
\end{equation*}
note that 
\begin{equation*}
T^r (x) \geq |E (x)| \ \ \ \text{ for all } x \in \Omega (r^2) \setminus \overline B_r,
\end{equation*}
and fix any $x \in \Omega (r^2) \setminus \overline B_r$.

Consider first the case where $x \in c_2 (0)$.
For each $s \in [r, \kappa]$,
the line $c_2 (0) \cap \left( \overline B_s \setminus B_r \right) \cap \{ (x_1, x_2) \, | \, x_1 \geq 0, \, x_2 \geq 0 \}$
is represented as
\begin{equation} \label{line}
X \left( t, \frac{1}{\sqrt{2}} (r, r) \right) = \frac{r}{\sqrt{2}} \, (e^{2t}, e^{2t}) \ \ \ \text{ with } t \in \left[0, \, \frac{1}{2}\log\frac{s}{r} \right],
\end{equation}
which implies that
\begin{equation*} 
|E(x)| = \log\frac{\kappa}{r}.
\end{equation*}
Hence, we have
\begin{equation} \label{T^r-bound1}
T^r (x) \geq \log\frac{\kappa}{r}.
\end{equation}

Consider next the case where $x \not \in c_2 (0)$.
Set $h = |H(x)|$.
For each $s \in [r, \kappa]$,
the curve (hyperbola) $c_2 (h) \cap \overline B_s \cap \{ (x_1, x_2) \, | \, x_2 > 0 \}$ is represented as
\begin{equation} \label{curve}
X \left(t, (0, \sqrt{h}) \right) = \frac{\sqrt{h}}{2} (e^{2t} -e^{-2t}, e^{2t} + e^{-2t}) \ \ \ \text{ with } t \in [-\sigma^s (h), \sigma^s (h)],
\end{equation} 
where
\begin{equation*} \label{sigma^s}
\sigma^s (h) := \frac{1}{4} \log \left( \frac{s^2}{h} + \sqrt{\frac{s^4}{h^2} - 1} \right).
\end{equation*}

Noting, by the rotational symmetry, that
\begin{equation*} 
|E(x)| = 2(\sigma^{\kappa} (h) - \sigma^r (h)),
\end{equation*}
we compute
\begin{align*}
T^r (x) &\geq 2(\sigma^{\kappa} (h) - \sigma^r (h)) 
           \geq \frac{1}{2} \log \frac{\kappa^2}{h} - \frac{1}{2} \log \frac{2r^2}{h} \\
         &= \log \frac{\kappa}{r} - \frac{\log 2}{2}.
\end{align*}
Combining this with \eqref{T^r-bound1} completes the proof.
\end{proof}

\begin{lem} \label{tau^r}
For all $r \in (0, \kappa)$,
$\tau_{\pm}^r \in C^1 \left( \Omega (r^2) \setminus \overline B_r \right)$.
\end{lem}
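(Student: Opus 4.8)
The plan is to prove that $\tau_+^r$ is $C^1$ by applying the implicit function theorem to the equation $|X(t,x)|^2=r^2$, and to deduce the statement for $\tau_-^r$ by time reversal. So I would fix $r\in(0,\kappa)$ and $x_0\in\Omega(r^2)\setminus\overline B_r$, set $t_0=\tau_+^r(x_0)>0$ and $y_0=X(t_0,x_0)\in\partial B_r$, and aim to show that $\tau_+^r$ is $C^1$ near $x_0$. The geometric heart of the argument is that the flow meets $\partial B_r$ transversally at $y_0$. Since $r<\kappa$ we have $y_0\in\overline B_\kappa$, so (H3) gives $H(y_0)=(y_0)_2^2-(y_0)_1^2$ and $b(y_0)=\big(2(y_0)_2,\,2(y_0)_1\big)$; conservation of $H$ along \eqref{HS} gives $H(y_0)=H(x_0)$, and since $x_0\in\Omega(r^2)$ this yields $|(y_0)_2^2-(y_0)_1^2|=|H(x_0)|<r^2=(y_0)_1^2+(y_0)_2^2$, which forces $(y_0)_1\neq0$ and $(y_0)_2\neq0$, hence $y_0\cdot b(y_0)=4(y_0)_1(y_0)_2\neq0$. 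Moreover, $t_0$ being the \emph{first} hitting time, $|X(t,x_0)|^2>r^2$ for $t\in[0,t_0)$, so $\frac{d}{dt}\big|_{t=t_0}|X(t,x_0)|^2=2\,y_0\cdot b(y_0)\le0$, and therefore $<0$.

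Next I would set $g(t,x)=|X(t,x)|^2-r^2$, which is $C^1$ because $X\in C^1(\mathbb R\times\mathbb R^2;\mathbb R^2)$, with $g(t_0,x_0)=0$ and $\partial_t g(t_0,x_0)<0$. The implicit function theorem then provides $\delta>0$, an open neighborhood $U$ of $x_0$ in $\Omega(r^2)\setminus\overline B_r$, and $\rho\in C^1(U)$ with $\rho(x_0)=t_0$, $\rho(U)\subset(t_0-\delta,t_0+\delta)$, and $g(\rho(x),x)=0$; after shrinking $\delta$ and $U$ I may also assume that $\partial_t g<0$ on $(t_0-\delta,t_0+\delta)\times U$, so that for each $x\in U$ the number $\rho(x)$ is the unique zero of $g(\cdot,x)$ in that interval and $g(t,x)>0$ for $t\in[t_0-\delta,\rho(x))$.

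It then remains to identify $\rho$ with $\tau_+^r$ on $U$, i.e. to verify that $\rho(x)$ is the \emph{first} positive hitting time of $\partial B_r$ along the flow from $x$. For this I would use that $t\mapsto|X(t,x_0)|^2-r^2$ is continuous and strictly positive on the compact interval $[0,t_0-\delta]$, hence bounded below there by some $c>0$; by continuous dependence of the flow on $(t,x)$ and compactness I can shrink $U$ so that $|X(t,x)|^2-r^2>c/2$ for all $(t,x)\in[0,t_0-\delta]\times U$. Combined with the previous paragraph this gives $|X(t,x)|>r$ for all $t\in[0,\rho(x))$ and $x\in U$, whence $\tau_+^r(x)\ge\rho(x)$; and $X(\rho(x),x)\in\partial B_r$ gives the reverse inequality, so $\tau_+^r=\rho\in C^1(U)$. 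Since $x_0$ was arbitrary, $\tau_+^r\in C^1(\Omega(r^2)\setminus\overline B_r)$.

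For $\tau_-^r$ I would write $X(-s,x)=\tilde X(s,x)$, where $\tilde X$ is the flow of the vector field $-b$, so that $\tau_-^r(x)=-\inf\{s>0:\tilde X(s,x)\in\partial B_r\}$; the transversality computation above is unaffected (only $y_0\cdot b(y_0)\neq0$ at the hitting point is used), so repeating the argument verbatim for $\tilde X$ gives $\tau_-^r\in C^1(\Omega(r^2)\setminus\overline B_r)$. I expect the only delicate point to be this last identification of the IFT branch $\rho$ with the first-hitting-time function $\tau_+^r$: the implicit function theorem produces merely a smooth local solution of $|X(t,x)|^2=r^2$ and says nothing about minimality, and ruling out earlier crossings of $\partial B_r$ for nearby initial data is exactly where the strict positivity of $|X(\cdot,x_0)|^2-r^2$ on $[0,t_0)$ and the local monotonicity $\partial_t g<0$ enter; everything else is a routine use of the implicit function theorem together with continuous dependence of solutions of \eqref{HS} on initial data.
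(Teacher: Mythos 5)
Your proof is correct and follows essentially the same route as the paper: apply the implicit function theorem to $F(t,x)=|X(t,x)|^2-r^2$ after verifying transversality via $\partial_t F=8X_1X_2\neq 0$ at the hitting point, which (H3) reduces to an algebraic fact on $\overline B_\kappa$. You additionally spell out two things the paper leaves implicit — the verification that $X_1X_2\neq0$ at the hitting point (using $|H|<r^2$ on $\partial B_r$) and the identification of the IFT branch with the \emph{first} hitting time via a lower bound on $[0,t_0-\delta]$ and continuous dependence — both of which are worthwhile details, but the underlying argument is the same.
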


\begin{proof}
Fix any $r \in (0, \kappa)$.
Let $\varphi \in C^1 (\mathbb R^2)$ and $F \in C^1 (\mathbb R \times \mathbb R^2)$ are the functions defined by
\begin{equation*}
\varphi (x_1, x_2) = x_1^2 + x_2^2 -r^2 \ \ \ \text{ and } \ \ \ F(t, x) = \varphi (X(t, x)).
\end{equation*}
Observe that for all $x \in \Omega (r^2) \setminus \overline B_r$,
$F(\tau_{\pm}^r (x), x) = 0$, and, in view of (H3),
\begin{align*}
\mathrm{\cfrac{d}{dt}} \, F(t, x) \Big|_{t = \tau_{\pm}^r (x)} = D\varphi \big( X(\tau_{\pm}^r (x), x) \big) \cdot \dot X (\tau_{\pm}^r (x), x)
                                                   = 8 X_1 (\tau_{\pm}^r (x), x) X_2 (\tau_{\pm}^r (x), x) \not= 0,
\end{align*}
where $X := (X_1, X_2)$.
Now the claim follows from the implicit function theorem.
\end{proof}

The following lemma is a consequence of Lemma \ref{tau^r}.

\begin{lem} \label{T^r}
For all $r \in (0, \kappa)$, $T^r \in C^1 \left( \Omega (r^2) \setminus \overline B_r \right)$.
\end{lem}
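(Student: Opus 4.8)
The plan is to observe that $T^r$ is, by its very definition, an algebraic combination of functions whose regularity has already been established, so no new analytic work is required. Concretely, recall that for $r\in(0,\kappa)$ and $x\in\Omega(r^2)\setminus\overline B_r$ we set
\[
T^r(x) = (\tau_+^r - \tau_-^r)(x),
\]
so the whole content reduces to transferring the $C^1$ regularity of $\tau_\pm^r$ through a subtraction.

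First I would invoke Lemma \ref{tau^r}, which asserts that $\tau_+^r$ and $\tau_-^r$ both belong to $C^1\!\left(\Omega(r^2)\setminus\overline B_r\right)$. Since $\Omega(r^2)\setminus\overline B_r$ is an open subset of $\mathbb R^2$ and the space $C^1$ of a fixed open set is a vector space, the difference $\tau_+^r - \tau_-^r$ again lies in $C^1\!\left(\Omega(r^2)\setminus\overline B_r\right)$. Hence $T^r\in C^1\!\left(\Omega(r^2)\setminus\overline B_r\right)$, which is exactly the assertion. Because $r\in(0,\kappa)$ was arbitrary, the conclusion holds for every such $r$.

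There is essentially no obstacle here: the only nontrivial input, namely that the first hitting times $\tau_\pm^r$ of the circle $\partial B_r$ by the Hamiltonian flow depend smoothly on the initial point, is precisely the statement of Lemma \ref{tau^r} (established via the implicit function theorem using $8X_1X_2\neq 0$ on $\partial B_r$ away from the axes, by (H3)). The present lemma is merely the bookkeeping remark that stability of $C^1$ regularity under linear combinations then yields the claim for $T^r$; one could equally note that $T^r$ is needed later (e.g.\ in Lemma \ref{low-bound-T_i^r}) as a genuinely $C^1$ quantity, so it is worth recording separately.

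\begin{proof}
Fix $r\in(0,\kappa)$. By definition, $T^r=\tau_+^r-\tau_-^r$ on $\Omega(r^2)\setminus\overline B_r$. According to Lemma \ref{tau^r}, $\tau_+^r,\tau_-^r\in C^1\!\left(\Omega(r^2)\setminus\overline B_r\right)$, and therefore so is their difference. Hence $T^r\in C^1\!\left(\Omega(r^2)\setminus\overline B_r\right)$.
\end{proof}
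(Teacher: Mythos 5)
Your proof is correct and matches the paper's intent exactly: the paper states Lemma \ref{T^r} as "a consequence of Lemma \ref{tau^r}" without further elaboration, which is precisely the observation that $T^r=\tau_+^r-\tau_-^r$ inherits $C^1$ regularity from the two hitting times. Nothing more is needed.
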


\begin{proof}[Proof of Lemma \ref{key}]
Fix any $\eta > 0$.
Choose a function $\widetilde G \in C^1 (\overline \Omega)$ so that
\begin{equation*}
|G(x, 0) - \widetilde G (x)| < \frac{\eta}{8} \ \ \ \text{ for all } x \in \overline \Omega,
\end{equation*}
define the function $g \in C^1 (\overline \Omega)$ by
\begin{equation*}
g(x) = \widetilde G(x) - \widetilde G(0),
\end{equation*}
and fix $\delta \in (0, \kappa/2)$ so that
\begin{equation*}
|g(x)| < \frac{\eta}{4} \ \ \ \text{ for all } x \in B_{2\delta}.
\end{equation*}

For each $r > 0$,
we choose a cut-off function $\zeta_r \in C^1 (\mathbb R^2)$ so that
\begin{align*} 
\begin{cases}
\zeta_r (x) = 0             \ \ \ &\text{ if } |x| \leq r, \\
0 \leq \zeta_r (x) \leq 1 \ \ \ &\text{ if } r \leq |x| \leq 2r, \\
\zeta_r (x) = 1             \ \ \ &\text{ if } |x| \geq 2r,
\end{cases}
\end{align*}
and define the function $f \in C^1 (\overline \Omega)$ by
\begin{equation*}
f (x) = g (x) \zeta_{\delta} (x),
\end{equation*}
which satisfies 
\begin{equation*}
f  = 0 \ \ \ \text{ on } \overline B_{\delta} \ \ \ \text{ and } \ \ \ |g - f| < \frac{\eta}{4} \ \ \ \text{ on } \overline \Omega.
\end{equation*}

Fix a small $r \in (0, \delta/4)$.
Let $\psi$ be the function in $\Omega (r^2) \setminus \overline B_r$ defined by
\begin{equation*}
\psi (x) = \int_0^{\tau_+^r (x)} \Big( f(X(t, x)) - \bar f (x) \Big) \, dt,
\end{equation*}
where
\begin{equation*}
\bar f (x) := \cfrac{1}{T^r (x)} \int_{\tau_-^r (x)}^{\tau_+^r (x)} f(X(s, x)) \, ds.
\end{equation*}

Recalling that $X \in C^1 (\mathbb R \times \mathbb R^2)$ and
$\tau_{\pm}^r$, $T^r \in C^1 \left( \Omega (r^2) \setminus \overline B_r \right)$,
it is clear that $\psi \in C^1 \left( \Omega (r^2) \setminus \overline B_r \right)$.
By using the dynamic programming principle,
we see that
\begin{equation} \label{psi_i}
-b \cdot D\psi = -f + \bar f \ \ \ \text{ in } \Omega (r^2) \setminus \overline B_r.
\end{equation}

Next, note that
\begin{equation*}
\lim_{\Omega (r^2) \setminus \overline B_r \ni y \to x} \psi (y) = 0 \ \ \ 
\text{ for all } x \in \Omega (r^2) \cap \partial B_r,
\end{equation*}
and let $\tilde \psi$ and $\varphi$ are the functions in $\Omega (r^2)$ defined by 
\begin{align*}
\tilde \psi (x) = 
\begin{cases}
\psi (x)   \ \ \ &\text{ if } x \in \Omega (r^2) \setminus \overline B_r, \\
0           \ \ \ &\text{ if } x \in \Omega (r^2) \cap \overline B_r, 
\end{cases}
\end{align*}
and
\begin{equation*}
\varphi (x) = \tilde \psi (x) \zeta_{2r} (x).
\end{equation*}
Noting that $\tilde \psi \in C(\Omega (r^2)) \cap C^1 (\Omega (r^2) \setminus \partial B_r)$,
we see that $\varphi \in C^1 (\Omega (r^2))$ and, moreover, that
\begin{equation} \label{varphi}
-b \cdot D \varphi = -b \cdot D \tilde \psi \zeta_{2r} - \tilde \psi b \cdot D\zeta_{2r} \ \ \ \text{ in } \Omega (r^2) \setminus \partial B_r.
\end{equation}

Now it is enough to show that
if $r \in (0, \delta/4)$ is sufficiently small, then $\varphi$ satisfies
\begin{equation} \label{enough}
- b \cdot D \varphi + G(x, 0) < G(0, 0) + \eta \ \ \ \text{ in } \Omega (r^2).
\end{equation}

Since $\varphi (x) = f (x) = 0$ for all $x \in \overline B_{2r}$, we conclude that
\begin{equation} \label{center}
-b(x) \cdot D \varphi (x) + f(x) = 0 \ \ \ \text{ for all } x \in \overline B_{2r}.
\end{equation}

In the case where $x \in \Omega (r^2) \setminus B_{4r}$,
since $\zeta_{2r} (x) = 1$ and $D\zeta_{2r} (x) = 0$,
by \eqref{varphi}, we see that
\begin{equation} \label{bvarphi-bpsi_i}
-b(x) \cdot D\varphi (x) = -b (x) \cdot D \tilde \psi (x) = -b (x) \cdot D\psi (x).
\end{equation}
Fix any $y \in \Omega (r^2) \setminus \overline B_{\delta}$.
Noting that
\begin{equation*}
f(X(t, y)) = 0 \ \ \ \text{ for all } t \in [\tau_-^r (y), \tau_-^{\delta} (y)] \cup [\tau_+^{\delta} (y), \tau_+^r (y)],
\end{equation*}
we have
\begin{equation} \label{T_i^delta} 
\int_{\tau_-^r (z)}^{\tau_+^r (z)} f(X(t, z)) \, dt = \int_{\tau_-^\delta (y)}^{\tau_+^\delta (y)} f (X(t, y)) \, dt \ \ \ 
\text{ for all } z \in \Omega (r^2) \setminus \overline B_r.
\end{equation}
Combining \eqref{psi_i}, \eqref{bvarphi-bpsi_i}, and \eqref{T_i^delta}
and using Lemma \ref{low-bound-T_i^r}, we get
\begin{align*} 
\begin{aligned}
|- b(x) \cdot D\varphi (x) + f (x)| &= |\bar f (x)| \\ 
                                            &\leq \left( \log \frac{\kappa}{r} - \frac{\log 2}{2} \right)^{-1} \int_{\tau_-^\delta (y)}^{\tau_+^\delta (y)} |f (X(t, y))| \, dt,
\end{aligned}
\end{align*}
from which, by replacing $r \in (0, \delta /4)$ by a smaller number if necessary, we conclude that  
\begin{equation} \label{outer}
- b(x) \cdot D\varphi (x) + f (x) < \frac{\eta}{2}.
\end{equation}

Let $x \in \Omega (r^2) \cap \left( B_{4r} \setminus \overline B_{2r} \right)$.
Noting that $f (x) = 0$, by \eqref{varphi}, we have
\begin{equation} \label{varphi-psi_1} 
-b(x) \cdot D\varphi (x) + f(x) = -b(x) \cdot D \tilde \psi (x) \zeta_{2r} (x) - \tilde \psi (x)b(x) \cdot D\zeta_{2r} (x).
\end{equation}
Using \eqref{T_i^delta}, we get
\begin{align*}
- b (x) \cdot D\tilde \psi (x) \zeta_{2r} (x) &\leq |- b (x) \cdot D\tilde \psi (x)| \\
                                                    &= |\bar f (x)| \leq \left( \log \frac{\kappa}{r} - \frac{\log 2}{2} \right)^{-1} \int_{\tau_-^\delta (y)}^{\tau_+^\delta (y)} |f (X(t, y))| \, dt, 
\end{align*}
from which,
by replacing $r \in (0, \delta /4)$ by a smaller number if necessary, we conclude that  
\begin{equation} \label{middle1}
-b (x) \cdot D\tilde \psi (x) \zeta_{2r} (x) < \cfrac{\eta}{4}.
\end{equation}

Next we note, by (H3), that $|b(y)| = O(|y|)$ as $y \to 0$.
We may assume that $|D\zeta_{2r} (y)| \leq C_0 |2r|^{-1}$ for all $y \in B_{4r}$ and for some $C_0 > 0$.
Hence we have
\begin{equation*}
|b (y) \cdot D\zeta_{2r} (y)| \leq C_1 \ \ \ \text{ for all } y \in B_{4r} \text{ and for some } C_1 > 0,
\end{equation*}
and
\begin{equation} \label{zeta_r}
-\tilde \psi b \cdot D\zeta_{2r} \leq C_1 |\tilde \psi| \ \ \ \text{ in } \Omega (r^2) \cap \left( B_{4r} \setminus \overline B_{2r} \right).
\end{equation}

Now, if $x \in c_2 (0)$, then, in view of \eqref{line}, we see that
\begin{equation*}
\tau_+^r (x) \wedge \tau_-^r (x) \leq \frac{1}{2} \log \frac{4r}{r} = \log 2,
\end{equation*}
and, if $x \not \in c_2 (0)$ and $h = |H(x)|$, then, in view of \eqref{curve}, we see that
\begin{equation*}
\tau_+^r (x) \wedge \tau_-^r (x) \leq \sigma^{4r} (h)- \sigma^r (h) \leq \frac{5}{4} \log 2.
\end{equation*}
These together yield
\begin{equation*}
\tau_+^r (x) \wedge \tau_-^r (x) \leq \log 2.
\end{equation*}

Consider first the case where $\tau_+^r (x) \wedge \tau_-^r (x) = \tau_-^r (x)$.
Using \eqref{T_i^delta}, we get
\begin{align} \label{psi_1}
\begin{aligned}
\tilde \psi (x) = \left( 1- \frac{\tau_+^r (x)}{T^r (x)} \right) \int_{\tau_-^\delta (y)}^{\tau_+^\delta (y)} f(X(t, y)) \, dt.
\end{aligned}
\end{align}
Since 
\begin{equation*}
T^r (x) - \log 2 \leq \tau_+^r (x) < T^r (x),
\end{equation*}
by Lemma \ref{low-bound-T_i^r}, we have
\begin{equation*}
0 < 1- \frac{\tau_+^r (x)}{T^r (x)} \leq \frac{\log 2}{T^r (x)} \leq \log 2 \left( \log \cfrac{\kappa}{r} -\cfrac{\log 2}{2} \right)^{-1},
\end{equation*}
and, hence, by replacing $r \in (0, \delta /4)$ by a smaller number if necessary,
we conclude from \eqref{psi_1} that
\begin{equation*}
C_1 |\tilde \psi (x)| < \frac{\eta}{4},
\end{equation*}
and, moreover, from \eqref{zeta_r}, that
\begin{equation} \label{psi_1-b-zeta_r}
-\tilde \psi (x)b(x) \cdot D\zeta_{2r} (x) < \frac{\eta}{4}.
\end{equation}

Consider next the case where $\tau_+^r (x) \wedge \tau_-^r (x) = \tau_+^r (x)$.
Noting that
\begin{equation*}
f (X(t, x)) = 0 \ \ \ \text{ for all } t \in [0, \tau_+^r (x)],
\end{equation*}
and using \eqref{T_i^delta}, we get
\begin{align*}
|\tilde \psi (x)| \leq \frac{\tau_+^r (x)}{T^r (x)} \int_{\tau_-^\delta (y)}^{\tau_+^\delta (y)} |f(X(t, y))| \, dt
                    \leq \log 2 \left( \log \cfrac{\kappa}{r} - \cfrac{\log 2}{2} \right)^{-1} \int_{\tau_-^\delta (y)}^{\tau_+^\delta (y)} |f(X(t, y))| \, dt,
\end{align*}
from which, by replacing $r \in (0, \delta /4)$ by a smaller number if necessary, we conclude that  
\begin{equation*}
C_1 |\tilde \psi (x)| < \frac{\eta}{4},
\end{equation*}
and, moreover, that
\begin{equation*}
-\tilde \psi (x)b(x) \cdot D\zeta_{2r} (x) < \frac{\eta}{4}.
\end{equation*}
Combining this with \eqref{psi_1-b-zeta_r} yields
\begin{equation} \label{middle2}
-\tilde \psi b \cdot D\zeta_{2r} < \frac{\eta}{4} \ \ \ \text{ in } \Omega (r^2) \cap \left( B_{4r} \setminus \overline B_{2r} \right).
\end{equation} 
Now, \eqref{varphi-psi_1}, \eqref{middle1}, and \eqref{middle2} together yield
\begin{equation*}
-b \cdot D\varphi + f < \frac{\eta}{2} \ \ \ \text{ in } \Omega (r^2) \cap \left( B_{4r} \setminus \overline B_{2r} \right).
\end{equation*} 
Combining this with \eqref{center} and \eqref{outer}, we get
\begin{equation*}
-b \cdot D\varphi + f < \frac{\eta}{2} \ \ \ \text{ in } \Omega (r^2),
\end{equation*}
and, moreover, for any $x \in \Omega (r^2)$,
\begin{align*}
\frac{\eta}{2} &> - b (x) \cdot D\varphi (x) + f (x) > - b (x) \cdot D\psi (x) + g (x) - \frac{\eta}{4} \\
                  &= - b (x) \cdot D\varphi (x) + \tilde G(x) - \tilde G(0) -\frac{\eta}{4} \\
                  &> - b (x) \cdot D\varphi (x) + G(x, 0) - G(0, 0) - \frac{\eta}{2}.
\end{align*}
This proves \eqref{enough}. The proof is complete.
\end{proof}

\begin{lem} \label{lim-min-barG_i}
We have 
\begin{equation*}
\lim_{h \to 0+} \min_{q \in \mathbb R} \overline G_i ((-1)^i h, q) = G(0, 0) \ \ \ \text{ for all } i \in \{ 1, 2, 3 \}.
\end{equation*}
\end{lem}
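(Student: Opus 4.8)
\emph{Plan.} Fix $i\in\{1,2,3\}$ and, for small $h>0$, choose $x_h\in c_i((-1)^ih)$; recall that $X(\cdot,x_h)$ is periodic with period $T_i((-1)^ih)=O(\log(1/h))$ (Lemma \ref{T_i}) and that $\overline G_i((-1)^ih,q)$ is the time-average over one period of $t\mapsto G(X(t,x_h),qDH(X(t,x_h)))$, independent of $x_h$. I will prove $\limsup_{h\to0+}\min_q\overline G_i((-1)^ih,q)\le G(0,0)$ and $\liminf_{h\to0+}\min_q\overline G_i((-1)^ih,q)\ge G(0,0)$ separately, the second being the real content. The common tool is a sharp estimate of the time spent near the origin: by (H3) the orbit, while in $\overline B_\kappa$, runs along the linear hyperbolic flow, for which $|X(t,x_h)|^2=h\,\cosh 4(t-t_0)$ along each of its finitely many (say $N_i$) near-origin passages; hence for $s\in(\sqrt h,\kappa]$ the total time $\theta_s(h):=|\{t\in[0,T_i((-1)^ih)]:X(t,x_h)\in B_s\}|$ equals $\tfrac{N_i}{2}\operatorname{arccosh}(s^2/h)$, while $T_i((-1)^ih)=\tfrac{N_i}{2}\operatorname{arccosh}(\kappa^2/h)+R_i(h)$ with $R_i(h)$ (the time outside $\overline B_\kappa$) bounded uniformly for small $h$, since the arc of $c_i((-1)^ih)$ outside $\overline B_\kappa$ has uniformly bounded length while $|DH|$ is bounded below there. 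Using $\operatorname{arccosh}(y)=\log(2y)+o(1)$ and $\tfrac{N_i a}{N_i b+R}\ge\tfrac a{b+R}$ (valid since $N_i\ge1$, $R\ge0$), this yields $\tfrac{\theta_s(h)}{T_i((-1)^ih)}\ge\dfrac{\log(1/h)+2\log s+O(1)}{\log(1/h)+O(1)}$ for $h$ small.

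\emph{Upper bound.} Take $q=0$ and show $\overline G_i((-1)^ih,0)\to G(0,0)$. Fix $s\in(0,\kappa]$ and, for $h$ small, split the average over $\{X(t,x_h)\in B_s\}$ and its complement: on the first set $|G(X(t,x_h),0)-G(0,0)|\le m(|X(t,x_h)|)\le m(s)$ by (G2), on the second $|G(X(t,x_h),0)|\le\|G(\cdot,0)\|_{L^\infty(\overline\Omega)}$. Since $\theta_s(h)/T_i((-1)^ih)\to1$ (with $s$ fixed), letting $h\to0+$ and then $s\to0+$ gives $\overline G_i((-1)^ih,0)\to G(0,0)$, hence $\limsup_{h\to0+}\min_q\overline G_i((-1)^ih,q)\le G(0,0)$. (Alternatively Lemma \ref{key} gives this at once: once $c_i((-1)^ih)\subset\Omega(\delta)$, $\overline G_i((-1)^ih,0)=\tfrac1{T_i((-1)^ih)}\int_0^{T_i((-1)^ih)}\big(G(X(t,x_h),0)-b\cdot D\psi(X(t,x_h))\big)\,dt\le G(0,0)+\eta$, as $\int_0^{T_i((-1)^ih)}b\cdot D\psi(X(t,x_h))\,dt$ telescopes to $0$ over one period.)

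\emph{Lower bound.} Fix $\eta\in(0,1)$. The map $q\mapsto\overline G_i((-1)^ih,q)$ is convex and coercive (Lemma \ref{G-bar}), so it attains its minimum at some $q_h$. First, $|q_h|\le C\log(1/h)$ for $h$ small: combine $\overline G_i((-1)^ih,q_h)\le\overline G_i((-1)^ih,0)\le|G(0,0)|+1$ (by the upper bound) with the bound $\overline G_i((-1)^ih,q)\ge\nu\,\tfrac{L_i((-1)^ih)}{T_i((-1)^ih)}|q|-M$ from the proof of Lemma \ref{con-ext}, using that $L_i((-1)^ih)$ is bounded below by a positive constant and $T_i((-1)^ih)=O(\log(1/h))$. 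Now set $s(h)=\eta/(2|q_h|+2)$; then $s(h)\to0$, $s(h)\in(\sqrt h,\kappa]$ for $h$ small, and $2\log s(h)\ge-2\log\log(1/h)+O_\eta(1)$, so the time-concentration estimate still gives $\rho_h:=\theta_{s(h)}(h)/T_i((-1)^ih)\to1$. On $\{X(t,x_h)\in B_{s(h)}\}$ one has, by (H3), $|DH(X(t,x_h))|=2|X(t,x_h)|\le2s(h)$ and $|q_hDH(X(t,x_h))|\le2|q_h|s(h)\le\eta$, whence by (G2) and continuity of $G(0,\cdot)$, $G(X(t,x_h),q_hDH(X(t,x_h)))\ge G(0,0)-\omega_0(\eta)-m(2s(h))$, with $\omega_0$ a modulus for $G(0,\cdot)$ near $0$; on the complement $G\ge-M$. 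Averaging, $\overline G_i((-1)^ih,q_h)\ge\rho_h\big(G(0,0)-\omega_0(\eta)-m(2s(h))\big)-(1-\rho_h)M$, and letting $h\to0+$ gives $\liminf_{h\to0+}\min_q\overline G_i((-1)^ih,q)\ge G(0,0)-\omega_0(\eta)$; then let $\eta\to0+$.

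\emph{Main obstacle.} The delicate point is the lower bound, because the minimizer $q_h$ may blow up: one cannot freeze a test slope and must shrink the neighborhood $B_{s(h)}$ with $h$. The argument closes only because $|q_h|=O(\log(1/h))$ forces $\log|q_h|=O(\log\log(1/h))$, of strictly smaller order than the logarithmic divergence of $T_i((-1)^ih)$. This rests on the two ingredients above: the coercivity bound with weight $L_i/T_i$ (which controls $q_h$), and the exact near-origin form of the Hamiltonian flow from (H3) with the sharp estimate of the time spent near $0$ (which forces $\rho_h\to1$ even when $s(h)\to0$).
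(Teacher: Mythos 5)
Your proof is correct and takes a genuinely different route from the paper's. The paper first proves the auxiliary length estimate $L(c_i(h)\cap B_r)\le 4r$ (Lemma \ref{length-c_i}), then, rather than tracking the minimizer, splits into the two regimes $q\le\gamma T_i(h)$ and $q>\gamma T_i(h)$; in the latter it introduces $S=\{t:|X(t,x)|\le R/(c_0|q|)\}$, notes that $|q||DH|>R$ off $S$ (so $G\ge G(0,0)$ there by coercivity), and controls the on-$S$ contribution via $\int_S|DH|\,dt=L\big(c_i(h)\cap B_{R/(c_0|q|)}\big)\le 4R/(c_0|q|)$, which cancels the $|q|$-factor exactly; everything then reduces to $L_i(h)/T_i(h)\to0$. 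You instead work with the time-fraction $\theta_s(h)/T_i((-1)^ih)$ spent in $B_s$, computed from the explicit linear saddle flow in (H3), bound the minimizer $|q_h|=O(\log(1/h))$ through the coercivity weight $\nu L_i/T_i$, and shrink the ball radius as $s(h)\sim\eta/|q_h|$ so that $|q_h\,DH|\le\eta$ there, the concentration estimate surviving because $\log|q_h|=O(\log\log(1/h))\ll\log(1/h)$. Both arguments exploit the same geometric fact — the orbit concentrates near the saddle, where $DH$ is small, in the sense that the remaining $O(1)$ portion of the period has vanishing weight — but the paper's case split on $q$ sidesteps any control of the minimizer and any $\log\log$-versus-$\log$ comparison, at the cost of the $q$-dependent set $S$ and the separate length lemma; your route is slightly longer but makes the asymptotic mechanism more transparent and is self-contained. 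Two small points worth adding: you should also require $\eta\le 2\kappa$ (so that $s(h)\le\eta/2\le\kappa$ and the linear form of $H$ genuinely applies on $B_{s(h)}$), and the bound $G\ge-M$ on the complement should be cited from the coercivity consequence $G(x,p)\ge\nu|p|-M$ stated after (G4).
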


In what follows, let $L(c)$ denote the length of a given curve $c$.

\begin{lem} \label{length-c_i}
For any $r \in (0, \kappa)$,
\begin{equation*}
L(c_i (h) \cap B_r) \leq 4r \ \ \ \text{ for all } h \in J_i \text{ and } i \in \{ 1, 2, 3 \}.
\end{equation*}
\end{lem}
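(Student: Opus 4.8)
The plan is to reduce everything to an elementary estimate on arcs of a rectangular hyperbola, exploiting that $H$ is exactly quadratic near the origin by (H3). First I would note that, since $r<\kappa$ and $\overline B_\kappa\subset\Omega$, assumption (H3) gives $H(x)=x_2^2-x_1^2$ for all $x\in B_r$; hence, for every $i\in\{1,2,3\}$ and $h\in J_i$,
\[
c_i(h)\cap B_r=\{\,x\in B_r\ |\ x_2^2-x_1^2=h\,\}.
\]
Because $0\notin J_i$, the number $h$ is nonzero, so this set --- when nonempty --- is a piece of a rectangular hyperbola.

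Next I would carry out the computation for $i=2$, where $h>0$. If $h\ge r^2$ the displayed set is empty and there is nothing to prove, so assume $0<h<r^2$. Then $c_2(h)\cap B_r$ consists of exactly two arcs, exchanged by the reflection $x_2\mapsto -x_2$; the upper one is the graph $x_2=\sqrt{h+x_1^2}$ over the interval $x_1\in(-a,a)$, where $a=\sqrt{(r^2-h)/2}$ is determined by the constraint $x_1^2+x_2^2<r^2$. Its length equals $\int_{-a}^{a}\sqrt{1+x_1^2/(h+x_1^2)}\,dx_1=\int_{-a}^{a}\sqrt{(h+2x_1^2)/(h+x_1^2)}\,dx_1$, and the one inequality that matters is $(h+2x_1^2)/(h+x_1^2)\le 2$, which holds precisely because $h>0$. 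This bounds the length of the upper arc by $2\sqrt2\,a=2\sqrt{r^2-h}\le 2r$, and by symmetry the lower arc satisfies the same bound, so $L(c_2(h)\cap B_r)\le 4r$. For $i\in\{1,3\}$, where $h<0$, the identical argument works after interchanging the roles of $x_1$ and $x_2$: the curve now lies on $x_1^2-x_2^2=|h|$ and is a graph over $x_2$, giving the bound $2\sqrt{r^2-|h|}\le 2r$ per branch (in fact $c_i(h)\subset D_i$ singles out one branch, but $4r$ holds regardless).

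There is no serious obstacle here. The only points that need a little care are checking that $c_i(h)\cap B_r$ is exactly the hyperbola piece claimed --- which uses only $B_r\subset\Omega$ together with the definitions of $\Omega_i$, $c_i$, and $D_i$ --- and correctly identifying the sub-interval of the axis over which each branch stays inside $B_r$. Everything else is a one-line estimate.
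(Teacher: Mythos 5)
Your proof is correct, but it takes a genuinely different route from the paper's. The paper recycles the Hamiltonian-flow parametrization $(\xi_1(t),\xi_2(t))=\frac{\sqrt h}{2}(e^{2t}-e^{-2t},\,e^{2t}+e^{-2t})$ already set up in the proof of Lemma \ref{low-bound-T_i^r}, computes the arc length as $4\sqrt{2h}\int_0^\tau\sqrt{e^{4t}+e^{-4t}}\,dt$, bounds the integrand by $e^{2t}+e^{-2t}$, integrates, and then uses the estimate $e^{2\tau}\le\sqrt{2/h}\,r$ to land on $4r$. You instead write each branch as the graph $x_2=\sqrt{h+x_1^2}$ over $|x_1|<a=\sqrt{(r^2-h)/2}$, observe the pointwise bound $\sqrt{(h+2x_1^2)/(h+x_1^2)}\le\sqrt 2$, and so get $2\sqrt 2\,a=2\sqrt{r^2-h}\le 2r$ per branch immediately, with no integration of exponentials and no need to relate a parameter $\tau$ back to $r$. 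Your version is a little more self-contained and the key inequality is a one-liner; the paper's version has the mild virtue of reusing the flow parametrization that already appears two lemmas earlier. You also correctly note that for $i\in\{1,3\}$ only one branch lies in $D_i$, so $2r$ actually suffices there, matching the paper's closing remark (modulo the paper's rotational-symmetry phrasing, where the factor is the same after swapping the roles of $x_1$ and $x_2$). No gaps.
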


\begin{proof}
Fix any $i \in \{ 1, 2, 3 \}$ and $h \in J_i$.
If $|h| \geq r^2$, then $c_i (h) \cap B_r = \emptyset$ and $L(c_i (h) \cap B_r) = 0$.

We assume henceforth that $|h| < r^2$ and, for the time being, that $i = 2$.
As seen in the proof of Lemma \ref{low-bound-T_i^r},
the curve (hyperbola) $c_2 (h) \cap B_r \cap \{ (x_1, x_2) \, | \, x_2 > 0 \}$ is represented as
\begin{equation*}
(x_1, x_2) = (\xi_1 (t), \xi_2 (t)) \ \ \ \text{ with } t \in (-\tau, \tau),
\end{equation*} 
where
\begin{equation*}
(\xi_1 (t), \xi_2 (t)) := \frac{\sqrt{h}}{2} \, (e^{2t} -e^{-2t}, e^{2t} + e^{-2t}),
\end{equation*}
and
\begin{equation*}
\tau := \frac{1}{4} \log \left( \frac{r^2}{h} + \sqrt{\frac{r^4}{h^2} - 1} \right).
\end{equation*}

We note
\begin{equation*}
e^{2\tau} \leq \sqrt{\frac{2}{h}} \, r,
\end{equation*}
and compute that
\begin{align*}
L(c_2 (h) \cap B_r) &= 2L(c_2 (h) \cap B_r \cap \{ (x_1, x_2) \, | \, x_2 > 0 \})
                           = 4\int_0^{\tau} \sqrt{{\dot \xi_1 (t)}^2 + {\dot \xi_2 (t)}^2} \, dt \\
                         &= 4\sqrt{2h} \int_0^{\tau} \sqrt{e^{4t} + e^{-4t}} \, dt 
                           \leq 4\sqrt{2h} \int_0^{\tau} (e^{2t} + e^{-2t}) \, dt \leq 2\sqrt{2h} e^\tau \leq 4r.
\end{align*}

By the rotational symmetry, the computation above also implies
that $L(c_i (h) \cap B_r) \leq 2r$ when $i = 1$ or $i = 3$.
The proof is complete.
\end{proof}

\begin{proof}[Proof of Lemma \ref{lim-min-barG_i}] 
We begin by noting that, for some $c_0 > 0$,
\begin{equation} \label{c_0}
|DH(x)| \geq c_0 |x| \ \ \ \text{ for all } x \in \overline \Omega.
\end{equation}
Indeed, by (H3), we have
\begin{equation} \label{DH-near-0}
|DH(x)| = 2|x| \ \ \ \text{ for all } x \in B_{\kappa},
\end{equation}
and also, we have
\begin{equation*}
\min_{\overline \Omega \setminus B_{\kappa}} |DH(x)| > 0.
\end{equation*}
These together show that \eqref{c_0} hold for some constant $c_0 > 0$.

Fix any $\gamma > 0$.
In view of the coercivity of $G$, we choose $R > 0$ so that
$G(x, p) \geq G(0, 0)$ for all $(x, p) \in \overline \Omega \times (\mathbb R^n \setminus B_R)$.
Since $G$ is uniformly continuous on $\overline \Omega \times \overline B_R$,
there is a constant $C_1 > 0$ such that
\begin{equation*}
|G(x, p) - G(0, 0)| \leq \gamma + C_1 (|x| + |p|) \ \ \ \text{ for all } (x, p) \in \overline \Omega \times \overline B_R.
\end{equation*}
Hence, we have
\begin{align*}
&G(x, p) \geq G(0, 0) - \gamma -C_1 (|x| + |p|) \ \ \ \text{ for all } (x, p) \in \overline \Omega \times \mathbb R^2, \\
&|G(x, 0) - G(0, 0)| \leq \gamma + C_1|x| \ \ \ \text{ for all } x \in \overline \Omega. 
\end{align*}

The two inequalities above combined with \eqref{c_0} yield
\begin{align}
&G(x, p) \geq G(0, 0) - \gamma -C_2 (|DH(x)| + |p|) \ \ \ \text{ for all } (x, p) \in \overline \Omega \times \mathbb R^2, \label{G-bound1} \\
&|G(x, 0) - G(0, 0)| \leq \gamma + C_2|DH(x)| \ \ \ \text{ for all } x \in \overline \Omega \label{G-bound2}
\end{align}
for some constant $C_2 > 0$.

Fix any $i \in \{ 1, 2, 3 \}$ and $h \in J_i$.
Fix a point $x \in c_i (h)$.
Taking the average of both sides of \eqref{G-bound2} along $c_i (h)$, we get
\begin{equation} \label{G_i-bound0}
|\overline G_i (h, 0) - G(0, 0)| \leq \gamma + \frac{C_2}{T_i (h)} \int_0^{T_i (h)} |DH(X(t, x))| \, dt = \gamma + \frac{C_2 L_i (h)}{T_i (h)}.
\end{equation}

Fix any $q \in \mathbb R$.
Consider first the case where $q \leq \gamma T_i (h)$.
Using \eqref{G-bound1}, we compute
\begin{align} \label{G_i-bound1}
\begin{aligned}
\overline G_i (h, q) - G(0, 0) + \gamma &\geq - \, \frac{C_2}{T_i (h)} \int_0^{T_i (h)} (1 + |q|)|DH(X(t, x))| \, dt \\
                                                   &= - (1 + |q|) \frac{C_2 L_i (h)}{T_i (h)} \geq  - \, \frac{C_2 L_i (h)}{T_i (h)} - \gamma C_2 L_i (h).
\end{aligned}
\end{align}

Consider next the case where $q > \gamma T_i (h)$.
Set
\begin{equation*}
S = \{ t \in [0, T_i (h)] \ | \ |X(t, x)| \leq R/(c_0 |q|) \}. 
\end{equation*}
Choose $\delta > 0$ so that if $|h| \leq \delta$, then
\begin{equation*}
\frac{R}{c_0 \gamma T_i (h)} \leq \kappa,
\end{equation*}
and assume henceforth that $|h| < \delta$.
Note that $R/(c_0 |q|) \leq \kappa$ and, by Lemma \ref{length-c_i},
\begin{equation*}
L(c_i (h) \cap B_{R/(c_0 |q|)}) \leq \frac{4R}{c_0 |q|},
\end{equation*}
which implies that
\begin{equation} \label{G_i-bound2}
\int_S |DH(X(t, x))| \, dt = L(c_i (h) \cap B_{R/(c_0 |q|)}) \leq \frac{4R}{c_0 |q|}.
\end{equation}
On the other hand, if $t \in [0, T_i (h)] \setminus S$, then, by \eqref{c_0},
\begin{equation*}
\frac{R}{c_0 |q|} < |X(t, x)| \leq \frac{|DH(X(t, x))|}{c_0},
\end{equation*}
that is, $|q| |DH(X(t, x))| > R$. Hence,
\begin{equation} \label{G_i-bound3}
G \big( X(t, x), qDH(X(t, x)) \big) \geq G(0, 0) \ \ \ \text{ for all } t \in [0, T_i (h)] \setminus S.
\end{equation}

Using \eqref{G-bound1}, \eqref{G_i-bound2}, and \eqref{G_i-bound3},
we compute that
\begin{align*}
\int_0^{T_i (h)} &G(X(t, x), qDH(X(t, x))) \, dt \\
                   &\geq \int_S \Big( G(0, 0) - \gamma - C_2(1+|q|)|DH(X(t, x))| \Big) \, dt + G(0, 0) \int_{[0, T_i (h)] \setminus S} \, dt \\
                   &\geq (G(0, 0) - \gamma) T_i (h) - C_2 L_i (h) - C_2 |q| L(c_i (h) \cap B_{R/(c_0 |q|)}) \\
                   &\geq (G(0, 0) - \gamma) T_i (h) - C_2 L_i (h) - \frac{4C_2 R}{c_0},
\end{align*}
from which we get
\begin{equation*}
\overline G_i (h, q) \geq G(0, 0) - \gamma - \frac{L_i (h)}{T_i (h)} - \frac{2R}{c_0 T_i (h)} \ \ \ \text{ if } |h| < \delta.
\end{equation*}

The inequality above together with \eqref{G_i-bound1} implies that
\begin{equation*}
\liminf_{J_i \ni h \to 0} \min_{q \in \mathbb R} \overline G_i (h, q) \geq G(0, 0) \ \ \ \text{ for all } i \in \{ 1, 2, 3 \},
\end{equation*}
while \eqref{G_i-bound0} yields
\begin{equation} \label{lim-barG_i-00}
\lim_{J_i \ni h \to 0} \overline G_i (h, 0) = G(0, 0) \ \ \ \text{ for all } i \in \{ 1, 2, 3 \}.
\end{equation}
These two together complete the proof.
\end{proof}

Formula \eqref{lim-barG_i-00} can be easily generalized as
\begin{equation} \label{general-lim-barG_i}
\lim_{J_i \times \mathbb R \ni (h, q) \to (0, 0)} \overline G_i (h, q) = G(0, 0) \ \ \ \text{ for all } i \in \{ 1, 2, 3 \}.
\end{equation}

As in Lemma \ref{v^--d_0}, for $i \in \{ 1, 2, 3 \}$,
let $u_i^+ \in \mathcal S_i^- \cap C(\bar J_i)$ be the function
defined by \eqref{u_i^pm} in $J_i$ and extended by continuity to $\bar J_i$
and let $d_0 = \min_{i \in \{ 1, 2, 3 \}} u_i^+ (0)$.

\begin{lem} \label{u_i^+0-G00}
We have
\begin{equation*}
\lambda u_i^+ (0) + G(0, 0) \leq 0 \ \ \ \text{ for all } i \in \{ 1, 2, 3 \}.
\end{equation*}
\end{lem}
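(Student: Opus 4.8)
The plan is to combine the viscosity subsolution property of $u_i^+$ with the asymptotics of $\overline G_i$ near $h=0$ furnished by Lemma \ref{lim-min-barG_i}. Fix $i \in \{1,2,3\}$ and $\eta > 0$. Since, by Lemma \ref{lim-min-barG_i}, $\min_{q \in \mathbb R} \overline G_i((-1)^i h, q) \to G(0,0)$ as $(-1)^i h \to 0+$, there exists $\delta_1 \in (0, |h_i|)$ such that
\[
\overline G_i(h, q) \geq \min_{q' \in \mathbb R} \overline G_i(h, q') > G(0,0) - \eta \qquad \text{for all } q \in \mathbb R \text{ and all } h \in J_i \text{ with } |h| < \delta_1 .
\]
Note this uses the locally uniform coercivity of $\overline G_i$ (Lemma \ref{G-bar}) only through the fact that the minimum over $q$ is attained, which is already built into the statement of Lemma \ref{lim-min-barG_i}.

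Next I would invoke the regularity of subsolutions established along the proof of Lemma \ref{con-ext}: every $u \in \mathcal S_i^-$ is locally Lipschitz continuous in $J_i$, hence so is $u_i^+$, and in particular $u_i^+$ is differentiable at almost every $h \in J_i$. At such a point $h$ the classical derivative $(u_i^+)'(h)$ belongs to the superdifferential of $u_i^+$ at $h$, so the defining inequality for a viscosity subsolution of \eqref{lim-HJ} yields
\[
\lambda u_i^+(h) + \overline G_i\big(h, (u_i^+)'(h)\big) \leq 0 .
\]
Combining this with the previous display gives $\lambda u_i^+(h) + G(0,0) - \eta < 0$ for almost every $h \in J_i$ with $|h| < \delta_1$.

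To conclude, I would pick a sequence $h_n \in J_i$ of points of differentiability of $u_i^+$ with $0 < |h_n| < \delta_1$ and $h_n \to 0$ — possible since the non-differentiability set is null and every neighbourhood of $0$ in $J_i$ has positive measure. Then $\lambda u_i^+(h_n) + G(0,0) - \eta \leq 0$ for every $n$, and, since $u_i^+ \in C(\bar J_i)$, letting $n \to \infty$ yields $\lambda u_i^+(0) + G(0,0) - \eta \leq 0$. Since $\eta > 0$ was arbitrary, the claim follows.

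The only delicate point is the implication ``viscosity subsolution $+$ local Lipschitz continuity $\Rightarrow$ the equation holds a.e.'', which is standard (one uses that the classical derivative lies in the superdifferential wherever it exists); I expect this to be the main, and essentially only, technical obstacle. In particular no boundary-layer analysis near $h=0$ is needed here, since the relevant behaviour of $\overline G_i$ has already been packaged into Lemma \ref{lim-min-barG_i}.
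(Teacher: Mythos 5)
Your proof is correct and follows essentially the same route as the paper: the paper asserts, for the viscosity subsolution $u_i^+ \in \mathcal S_i^- \cap C(\bar J_i)$, the pointwise bound $\lambda u_i^+(h) + \min_{q \in \mathbb R} \overline G_i(h,q) \leq 0$ in $J_i$ and then invokes Lemma \ref{lim-min-barG_i} together with continuity of $u_i^+$ at $h=0$. You simply make explicit the justification the paper leaves implicit (local Lipschitz continuity of subsolutions from Lemma \ref{con-ext}, a.e.\ differentiability, and the subsolution inequality at differentiability points), and you pass to the limit along a sequence of differentiability points rather than using the ``for all $h$'' form of the inequality -- a minor and entirely legitimate variant that even sidesteps the need to argue continuity of $h \mapsto \min_q \overline G_i(h,q)$.
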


\begin{proof}
Fix $i \in \{1, 2, 3 \}$.
Since $u_i^+ \in \mathcal S_i^- \cap C(\bar J_i)$,
we see that
\begin{equation*}
\lambda u_i^+ (h) + \min_{q \in \mathbb R} \overline G_i (h, q) \leq 0 \ \ \ \text{ for all } h \in J_i.
\end{equation*}
By Lemma \ref{lim-min-barG_i},
we conclude that $\lambda u_i^+ (0) + G(0, 0) \leq 0$.
\end{proof}

\begin{lem} \label{nu_i^d} 
Let $d \in (-\infty, d_0)$ and set
\begin{equation} \label{def-nu_i^d}
\nu_i^d (h) = \sup \{ u (h) \ | \ u \in \mathcal S_i^- \cap C(\bar J_i), \ u(0) = d \} \ \ \ \text{ for } h \in \bar J_i \text{ and } i \in \{ 1, 2, 3 \}.
\end{equation}
Then there exists $\delta \in (0, h_0)$ such that
\begin{equation*}
\nu_i^d (h) > d \ \ \ \text{ for all } h \in J_i \cap [-\delta, \delta ] \text{ and } i \in \{ 1, 2, 3 \}.
\end{equation*}
\end{lem}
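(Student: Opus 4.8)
The plan is to prove, separately for each $i\in\{1,2,3\}$, that there is a \emph{single} admissible competitor $w_i\in\mathcal S_i^-\cap C(\bar J_i)$ with $w_i(0)=d$ which already lies strictly above $d$ on a fixed neighbourhood of $0$ in $J_i$; since $\nu_i^d\ge w_i$, this gives $\nu_i^d>d$ on that neighbourhood, and one takes $\delta$ to be the smallest of the three radii. The construction of $w_i$ will glue an explicit ``corner'' near $0$ to a global subsolution on the rest of $J_i$.

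First I would record the quantitative gain hidden in $d<d_0$. By Lemma~\ref{u_i^+0-G00}, $\lambda u_j^+(0)+G(0,0)\le0$ for every $j$, so $\lambda d_0=\min_j\lambda u_j^+(0)\le-G(0,0)$ and hence
\[
2\eta_0:=-\bigl(\lambda d+G(0,0)\bigr)>0,
\]
because $\lambda d<\lambda d_0\le-G(0,0)$. By the limit relation \eqref{general-lim-barG_i} together with Lemma~\ref{lim-min-barG_i}, I may then fix $\delta_1\in(0,h_0)$ and $\sigma_1>0$ so that, for every $i$,
\[
\overline G_i(h,q)<G(0,0)+\eta_0 \qquad\text{and}\qquad \min_{q'\in\mathbb R}\overline G_i(h,q')<-\lambda d
\]
whenever $h\in\bar J_i$, $0<|h|\le\delta_1$, and $|q|\le\sigma_1$. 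I would also note that the competitor class is nonempty: $\tilde u_i:=u_i^+-\bigl(u_i^+(0)-d\bigr)$ lies in $\mathcal S_i^-\cap C(\bar J_i)$ (subtracting the nonnegative constant $u_i^+(0)-d$ only strengthens the subsolution inequality) and $\tilde u_i(0)=d$; combined with the uniform bound of Lemma~\ref{bound-by-bc} (applied at $a=0$) and the equi-continuity of Lemma~\ref{equi-con}, this shows $\nu_i^d$ itself belongs to $\mathcal S_i^-\cap C(\bar J_i)$ with $\nu_i^d(0)=d$.

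For the construction, fix $\sigma\in(0,\sigma_1]$ and $\rho\in(0,\delta_1/2]$ with $\lambda\sigma\rho<\eta_0$, and put $w_i(h)=d+\sigma\,\mathrm{dist}(h,\{0\})$ on $J_i\cap[-\rho,\rho]$. This is a $C^1$ (off $0$) strict subsolution of \eqref{lim-HJ} there, since its slope has modulus $\sigma\le\sigma_1$ and
\[
\lambda w_i(h)+\overline G_i\bigl(h,w_i'(h)\bigr)\le\lambda(d+\sigma\rho)+G(0,0)+\eta_0=-\eta_0+\lambda\sigma\rho<0 .
\]
On the complementary compact subintervals $[\rho,h_2]$, resp.\ $[h_i,-\rho]$, which avoid the degenerate point $0$, I extend $w_i$ by a global subsolution of \eqref{lim-HJ} matching the value $d+\sigma\rho$ at the junction $\pm\rho$: the natural object is the state-constrained value function (equivalently, the solution of the one-sided Dirichlet problem with datum $d+\sigma\rho$ at $\pm\rho$ and the inward constraint at $h_i$), which exists, is Lipschitz, and is continuous up to the boundary because $\overline G_i$ is coercive on these subintervals (Lemma~\ref{G-bar}); the junction value is then arranged to agree with $d+\sigma\rho$, the glued function being pushed below $\tilde u_i$ far from $0$ where it is forced to drop (precisely because $\min_q\overline G_i(h,q)>-\lambda d$ may hold there). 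Across $h=\pm\rho$ the viscosity subsolution property is checked as usual: test functions from above have slopes between the two one-sided derivatives, and by convexity of $q\mapsto\overline G_i(h,q)$ the subsolution inequality at the corner reduces to the inequalities already established on the two sides. Then $w_i(0)=d$ and $w_i(h)=d+\sigma\,\mathrm{dist}(h,\{0\})>d$ on $J_i\cap[-\rho,\rho]\setminus\{0\}$, whence $\nu_i^d(h)\ge w_i(h)>d$ there; taking $\delta=\min_i\rho_i$ completes the proof.

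The step I expect to be the main obstacle is exactly this global completion. Because the averaged Hamiltonian $\overline G_i$ is only \emph{locally} coercive — its coercivity constant degenerates as $h\to0$, which is the one-dimensional shadow of the non-coercivity of \eqref{epHJ} for small $\varepsilon$ — one cannot simply integrate an ODE through $0$, and one must genuinely produce a global subsolution of \eqref{lim-HJ} (necessarily dipping below $d$ wherever $\min_q\overline G_i(h,q)>-\lambda d$) that agrees with the explicit corner near $0$. Isolating the junction at $\pm\rho>0$, where $\overline G_i$ is honestly coercive, and invoking one-dimensional optimal-control/value-function theory there is what makes the gluing legitimate; carrying this out carefully, together with the corner verification, is the technical heart of the argument.
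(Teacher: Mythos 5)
Your near-origin construction is, in substance, the paper's: a strict subsolution of the form $d+\sigma|h|$ on $J_i\cap[-\rho,\rho]$, with strictness coming from $\lambda d + G(0,0)<0$ (which you correctly derive from $d<d_0$ via Lemma~\ref{u_i^+0-G00}) and the joint-limit formula \eqref{general-lim-barG_i}, and the conclusion $\nu_i^d\ge w_i>d$ on $J_i\cap[-\rho,\rho]$. Where you diverge is in the global completion, which you correctly identify as the crux but then attack with more machinery than is needed, leaving a genuine loose end. You propose to splice the corner to a state-constrained/one-sided-Dirichlet value function on the far subinterval; besides invoking a black-box existence result, this leaves unverified that the value function actually \emph{attains} the datum $d+\sigma\rho$ at the junction $\pm\rho$ (boundary-layer issues for value functions are not automatic to dismiss, even in one dimension). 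The paper's completion is elementary and closes this gap at once: on $\bar J_i\setminus[-\delta,\delta]$ simply take $w_i=u_i^+-c_i$ with $c_i := u_i^+((-1)^i\delta)-(d+\delta^2)$. Because $d<d_0\le u_i^+(0)$ and $u_i^+$ is continuous, $c_i\ge0$ for $\delta$ small, so subtracting the constant only strengthens the subsolution inequality; the two pieces match by construction at $h=(-1)^i\delta$, the glued function is Lipschitz and satisfies the equation a.e., and convexity of $q\mapsto\overline G_i(h,q)$ upgrades this to a viscosity subsolution — no new value-function theory, no compatibility check. In short: the obstacle you flag is real only if you insist on manufacturing a fresh subsolution on the far part; you already have one, namely $u_i^+$, and the observation $d<d_0\le u_i^+(0)$ (which you use only to get the strictness near $0$) is also what licenses the downward constant shift that makes the gluing trivial.
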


Here an important remark on $\mathcal S_i^-$ is
that if $u \in \mathcal S_i^-$, with $i \in \{ 1, 2, 3 \}$,
then $u - a \in \mathcal S_i^-$ for any constant $a > 0$.
In particular, the sets of all $u \in \mathcal S_i^-$ that satisfy $u(0) = d$, with $d < d_0$ and $i \in \{ 1, 2, 3 \}$,
are non-empty and, by Lemmas \ref{equi-con} and \ref{bound-by-bc},
these are uniformly bounded and equi-continuous on $\bar J_i$.
Thus, the functions $\nu_i^d$, with $i \in \{ 1, 2, 3 \}$, are well-defined as continuous functions on $\bar J_i$ and,
in view of Perron's method, these are solutions of \eqref{lim-HJ}.

\begin{proof} 
Since $d < d_0$, we have
\begin{equation*}
\lambda d + G(0, 0) < 0 \ \ \ \text{ and } \ \ \ \min_{i \in \{ 1, 2, 3 \}} u_i^+ (0) > d.
\end{equation*}
By \eqref{general-lim-barG_i}, there exists $\delta \in (0, h_0)$ such that,
for all $i \in \{ 1, 2, 3 \}$ and $h \in J_i \cap [ -\delta, \delta ]$,
\begin{equation*}
\lambda (d + (-1)^i h \delta ) + \overline G_i (h, (-1)^i \delta ) < 0 \ \ \ \text{ and } \ \ \ u_i^+ (h) \geq d + (-1)^i h \delta.
\end{equation*}

For $i \in \{ 1, 2, 3 \}$, we set 
\begin{align*}
w_i (h) = 
\begin{cases}
u_i^+ (h) - u_i^+ ((-1)^i \delta ) + d + \delta^2 \ \ \ &\text{ if } h \in \bar J_i \setminus [-\delta, \delta ], \\
d + (-1)^i h \delta                                       \ \ \ &\text{ if } h \in \bar J_i \cap [-\delta, \delta ], 
\end{cases}
\end{align*}
and observe that $w_i \in \mathrm{Lip} (\bar J_i)$ and 
\begin{equation*}
\lambda w_i (h) + \overline G_i (h, Dw_i (h)) \leq 0 \ \ \ \text{ for a.e. } h \in J_i.
\end{equation*}
This and the convexity of $\overline G_i (h, \cdot)$ imply that $w_i \in \mathcal S_i^-$ for all $i \in \{ 1, 2, 3 \}$.
Noting that $w_i (0) = d$, we see that $\nu_i^d \geq w_i$ on $\bar J_i$ for all $i \in \{ 1, 2, 3 \}$, which,
in particular, shows that $\nu_i^d (h) \geq d + (-1)^i h \delta > d$ for all $h \in J_i \cap [-\delta, \delta]$ and $i \in \{ 1, 2, 3 \}$.
The proof is complete.
\end{proof}

\begin{proof}[Proof of Lemma \ref{v^--d_0}] 
We argue by contradiction.
We thus set $d = \min_{c_2 (0)} v^-$ and suppose that $d < d_0$.

For $i\in\{1,2,3\}$, let $\nu_i^d$ be the functions defined by \eqref{def-nu_i^d}.
For $i \in \{ 1, 2, 3 \}$ and $h \in \bar J_i$, we set
\begin{equation*}
v_i (h) = u_i^+ (h) \wedge \nu_i^d (h),
\end{equation*}
and observe that $v_i \in \mathcal S_i \cap C(\bar J_i)$, $v_i (0) = d$, and $v_i (h_i) \leq u_i^+ (h_i) = d_i$. 
Noting that $u_i^-\in\mathcal S_i^+$, $\lim_{J_i\ni h\to h_i} u_i^- (h) =d_i$, 
and $\lim_{J_i\ni h\to 0}u_i^-(h)\geq d$ for all $i\in\{1,2,3\}$ and using the comparison principle, we get
\begin{equation*}
u_i^-(h) \geq v_i (h) \ \ \ \text{ for all } h \in J_i \text{ and } i \in \{ 1, 2, 3 \}.
\end{equation*}

Since $d < d_0$, thanks to Lemma \ref{u_i^+0-G00},
we may choose $\eta > 0$ so that
\begin{equation*}
\lambda d + G(0, 0) < - \eta.
\end{equation*}
By Lemma \ref{key},
there exist $\delta \in (0, h_0)$ and $\psi \in C^1 (\overline{\Omega (\delta)})$ such that
\begin{equation*}
-b \cdot D\psi + G(x, 0) \leq G(0, 0) + \eta \ \ \ \text{ on } \overline{\Omega (\delta )}.
\end{equation*}
In view of Lemma \ref{nu_i^d},
by replacing $\delta \in (0, h_0)$ by a smaller number if necessary, we may assume that
\begin{equation*}
v_i (h) > d \ \ \ \text{ for all } h \in J_i \cap [-\delta, \delta ] \text{ and } i \in \{ 1, 2, 3 \}. 
\end{equation*}
Now we may choose $c \in (d, d_0)$ so that
\begin{equation*}
c < \min_{i \in \{ 1, 2, 3 \}} v_i ((-1)^i \delta ) \ \ \ \text{ and } \ \ \ \lambda c + G(0, 0) < -\eta.
\end{equation*}

Fix any $\gamma > 0$ so that $\gamma < c - d$.
That is, $c - \gamma > d$. Set
\begin{equation*}
w^\varepsilon (x) = c - \gamma + \varepsilon \psi (x) \ \ \ \text{ for } x \in \overline{\Omega (\delta )} \text{ and } \varepsilon \in (0, \varepsilon_0 ),
\end{equation*}
and compute that, for any $\varepsilon \in (0, \varepsilon_0)$ and $x \in \overline{\Omega (\delta)}$,
\begin{align} \label{ineq: w^ep}
\begin{aligned}
\lambda w^\varepsilon (x) &- \cfrac{b (x) \cdot Dw^\varepsilon (x)}{\varepsilon} + G(x, Dw^\varepsilon (x)) \\
                                  &= \lambda (c - \gamma ) + \varepsilon \lambda \psi (x) - b(x) \cdot D\psi (x) + G(x, \varepsilon D\psi (x)) \\
                                  &= \lambda (c - \gamma ) + \varepsilon \lambda \psi (x) - b(x) \cdot D \psi (x) + G(x, 0) + G(x, \varepsilon D\psi (x)) - G(x, 0) \\
                                  &\leq - \lambda \gamma + \varepsilon \lambda \psi (x) + \lambda c + G(0, 0) + \eta + G(x, \varepsilon D \psi (x)) - G(x, 0) \\
                                  &< -\lambda \gamma + \varepsilon \lambda \psi (x) + G(x, \varepsilon D\psi (x)) - G(x, 0),
\end{aligned}
\end{align}
from which, by replacing $\varepsilon_0 \in (0, 1)$ by a smaller number if necessary,
we may assume that if $\varepsilon \in (0, \varepsilon_0)$, then 
\begin{equation*}
\lambda w^\varepsilon (x) - \cfrac{b(x) \cdot Dw^\varepsilon (x)}{\varepsilon} + G(x, Dw^\varepsilon (x)) \leq 0 \ \ \ \text{ for all } x \in \Omega (\delta ).
\end{equation*}
Since
\begin{equation*}
c - \gamma < \min_{i \in \{ 1, 2, 3 \}} v_i ((-1)^i \delta) - \gamma,
\end{equation*}
by replacing $\varepsilon_0 \in (0, 1)$ by a smaller number if necessary,
we may assume that if $\varepsilon \in (0, \varepsilon_0)$, then
\begin{equation*}
w^\varepsilon (x) < \min_{i \in \{ 1, 2, 3 \}} v_i ((-1)^i \delta ) -\gamma \ \ \ \text{ for all } x \in \overline{\Omega (\delta )}.
\end{equation*}
Moreover, since $u_i^- \geq v_i$ in $J_i$,
by replacing $\varepsilon_0 \in (0, 1)$ by a smaller number if necessary,
we may assume that if $\varepsilon \in (0, \varepsilon_0)$, then
\begin{equation*}
\min_{i \in \{ 1, 2, 3 \}} v_i ((-1)^i \delta ) - \gamma < u^\varepsilon (x) \ \ \ \text{ for all } x \in c_i ((-1)^i \delta ) \text{ and } i \in \{ 1, 2 ,3 \}.
\end{equation*}
Noting that
\begin{equation*}
\partial \Omega (\delta ) = \bigcup_{i \in \{ 1, 2, 3 \}} c_i ((-1)^i \delta ),
\end{equation*}
and applying the comparison principle on $\overline{\Omega (\delta)}$,
we get
\begin{equation*}
w^\varepsilon(x) \leq u^\varepsilon(x) \ \ \ \text{ for all } x \in \overline{\Omega (\delta )} \text{ and } \varepsilon \in (0, \varepsilon_0),
\end{equation*}
which yields 
\begin{equation*}
c - \gamma \leq v^- (x) \ \ \ \text{ for all } x \in c_2 (0).
\end{equation*}
Since $d<c-\gamma$, this is a contradiction.
\end{proof}

\section{The boundary data for the odes}

In this section, we \textit{do not} assume (G5) and (G6).

Given $(d_0, d_1, d_2, d_3) \in \mathbb R^4$,
we consider here the admissibility of the boundary data $(d_0, d_1, d_2, d_3)$
for the boundary value problem, with $i \in \{ 1, 2, 3 \}$,
\begin{align}\label{HJ}
\begin{cases}
\lambda u_i + \overline G_i (h, u_i') = 0 \ \ \ \text{ in } J_i, \\ \tag{$\mathrm{HJ}$}
u_i (h_i) = d_i, \\
u_i (0) = d_0,
\end{cases}
\end{align}
where the admissibility means that, with given $(d_1,d_2,d_3)$, 
conditions (G5) and (G6) hold 
for some boundary data $g^\varepsilon$.

For $i \in \{1, 2, 3 \}$,
let $I_i$ be the set of $d \in \mathbb R$ such that the set
\begin{equation*}
\{ u \in \mathcal S_i^- \cap C(\bar J_i) \ | \ u(h_i) = d \}
\end{equation*}
is nonempty.

Fix $i \in \{ 1, 2, 3 \}$.
Since $\lambda > 0$, it is obvious that
if $d \in I_i$ and $c < d$, then $c \in I_i$.
Observe that if $d \in \mathbb R$ satisfies
\begin{equation*}
\lambda d + \max_{h \in \bar J_i} \overline G_i (h, 0) \leq 0, 
\end{equation*}
then $d \in \mathcal S_i^-$ and $d \in I_i$,
and that if $d \in \mathbb R$ satisfies
\begin{equation*}
\lambda d + \min_{(h, p) \in \bar J_i \times \mathbb R} \overline G_i (h, p) > 0,
\end{equation*} 
then $d \not \in I_i$.
Thus we see that $I_i = (-\infty, a_i]$
for some $a_i \in \mathbb R$.

For $i \in \{ 1, 2, 3 \}$, $d \in I_i$, and $h \in \bar J_i$, we set
\begin{equation*}
\rho_i^d (h) = \sup \{ u(h) \ | \ u \in \mathcal S_i^- \cap C(\bar J_i), \ u(h_i) = d \},
\end{equation*}
and observe that $\rho_i^d \in \mathcal S_i\cap C(\bar J_i)$ and $\rho_i^d (h_i)=d$.

We set
\begin{equation*}
\rho_0 = \min_{i \in \{ 1, 2, 3 \}} \sup_{d \in I_i} \rho_i^d (0),
\end{equation*}
and note, in view of \eqref{bound-by-bc}, that $\rho_0 < \infty$.

Let $I_0$ be the set of $d \in \mathbb R$ such that
\begin{equation*}
\{ u \in \mathcal S_i^- \cap C(\bar J_i) \ | \ u(0) = d \} \not= \emptyset \ \ \ \text{ for all } i \in \{ 1, 2, 3 \}.
\end{equation*}
It is obvious, as well as $I_i$, that if $d \in I_0$ and $c < d$, then $c \in I_0$.
Observe that if $d > \rho_0$, then $d \not\in I_0$,
and that if $d \in I_0$, 
then 
there exist $c_i \in I_i$, with $i \in \{ 1, 2, 3 \}$,
such that $\rho_i^{c_i} (0) = d$.
Thus we see that $I_0 = (-\infty, \rho_0]$.

For $i \in \{ 1, 2, 3 \}$, $d \in I_0$, and $h \in \bar J_i$, we set
\begin{equation*}
\nu_i^d (h) = \sup \{ u(h) \ | \ u \in \mathcal S_i^- \cap C(\bar J_i), \ u(0) = d \},
\end{equation*}
and observe that $\nu_i^d \in \mathcal S_i \cap C(\bar J_i)$ and $\nu_i^d (0) = d$.

\begin{thm} \label{exist}
Let $(d_0, d_1, d_2, d_3) \in \mathbb R^4$. 
The problem \eqref{HJ} has
a viscosity solution 
$(u_1, u_2, u_3)$ $\in C(\bar J_1) \times C(\bar J_2) \times C(\bar J_3)$
if and only if
\begin{align} \label{condi: admissible}
\begin{cases}
(d_0, d_1, d_2, d_3) \in I_0 \times I_1 \times I_2 \times I_3, \\
\min_{i \in \{ 1, 2, 3 \}} \rho_i^{d_i} (0) \geq d_0, \\
\nu_i^{d_0} (h_i) \geq d_i \ \ \ \text{ for all } i \in \{ 1, 2, 3 \}.
\end{cases}
\end{align} 
\end{thm}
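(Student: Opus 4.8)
The plan is to prove the two implications of the equivalence separately, in each case by directly unwinding the definitions of $I_i$, $I_0$, $\rho_i^{d}$, and $\nu_i^{d}$ introduced just above, together with the Perron-type facts and a priori estimates already recorded in this section.

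\emph{Necessity.} Suppose $(u_1,u_2,u_3)\in C(\bar J_1)\times C(\bar J_2)\times C(\bar J_3)$ solves \eqref{HJ}. First I would observe that each $u_i$ belongs to $\mathcal S_i\subset\mathcal S_i^-$, is continuous on $\bar J_i$, and satisfies $u_i(h_i)=d_i$ and $u_i(0)=d_0$; hence $u_i$ is an admissible competitor both in the definition of $I_i$ and (for all $i$ simultaneously) in that of $I_0$, giving $d_i\in I_i$ and $d_0\in I_0$, which is the first line of \eqref{condi: admissible}. Next, since $u_i\in\mathcal S_i^-\cap C(\bar J_i)$ with $u_i(h_i)=d_i$, the definition of $\rho_i^{d_i}$ as a supremum forces $\rho_i^{d_i}(0)\ge u_i(0)=d_0$; taking the minimum over $i$ yields the second line. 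Symmetrically, $u_i\in\mathcal S_i^-\cap C(\bar J_i)$ with $u_i(0)=d_0$ forces $\nu_i^{d_0}(h_i)\ge u_i(h_i)=d_i$ for each $i$, which is the third line.

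\emph{Sufficiency.} Conversely, assume \eqref{condi: admissible}. From the first line $d_i\in I_i$ and $d_0\in I_0$, so $\rho_i^{d_i}$ and $\nu_i^{d_0}$ are well defined; by the a priori bound of Lemma \ref{bound-by-bc}, the equi-continuity of Lemma \ref{equi-con}, and Perron's method, both lie in $\mathcal S_i\cap C(\bar J_i)$, with $\rho_i^{d_i}(h_i)=d_i$ and $\nu_i^{d_0}(0)=d_0$. I would then set
\begin{equation*}
u_i := \rho_i^{d_i}\wedge\nu_i^{d_0}\quad\text{on }\bar J_i,\qquad i\in\{1,2,3\},
\end{equation*}
and verify that this triple solves \eqref{HJ}. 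Continuity of $u_i$ on $\bar J_i$ is immediate. That $u_i\in\mathcal S_i$ rests on two points: $u_i$ is a viscosity supersolution as the pointwise minimum of two supersolutions, and, using the (local) Lipschitz regularity of subsolutions from Lemma \ref{con-ext} together with the convexity of $q\mapsto\overline G_i(h,q)$ inherited from (G3), the a.e.\ identity $(\rho_i^{d_i}\wedge\nu_i^{d_0})'\in\{(\rho_i^{d_i})',(\nu_i^{d_0})'\}$ makes $u_i$ an almost-everywhere subsolution of \eqref{lim-HJ} and hence a viscosity subsolution. Finally I would check the boundary data: the third line of \eqref{condi: admissible} gives $\nu_i^{d_0}(h_i)\ge d_i$, so $u_i(h_i)=d_i\wedge\nu_i^{d_0}(h_i)=d_i$, while the second line gives $\rho_i^{d_i}(0)\ge d_0$, so $u_i(0)=\rho_i^{d_i}(0)\wedge d_0=d_0$.

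I do not expect a serious obstacle, since all the analytic content — existence of the maximal solutions $\rho_i^d$, $\nu_i^d$, their continuity up to the endpoints of $J_i$, and the uniform estimates — has already been established. The one step that deserves care is the assertion that the pointwise minimum of two solutions of \eqref{lim-HJ} is again a solution; this is exactly where the convexity (G3) (passed to $\overline G_i$) and the Lipschitz regularity of Lemma \ref{con-ext} are genuinely used, via the almost-everywhere subsolution characterization indicated above. The rest is a transparent dictionary between the three lines of \eqref{condi: admissible} and, respectively, nonemptiness of the competitor classes and the two inequalities that force $\rho_i^{d_i}\wedge\nu_i^{d_0}$ to attain the prescribed values $d_i$ at $h_i$ and $d_0$ at $0$.
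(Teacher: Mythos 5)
Your proposal is correct and follows the paper's own argument: necessity comes from reading $u_i$ itself as a competitor in the definitions of $I_i$, $I_0$, $\rho_i^{d_i}$, $\nu_i^{d_0}$, and sufficiency from taking $u_i=\rho_i^{d_i}\wedge\nu_i^{d_0}$ and checking the boundary values, exactly as in the paper. The only difference is that you spell out why the pointwise minimum of two solutions is again a solution (convexity of $\overline G_i$, Lipschitz regularity, a.e.\ subsolution characterization), a step the paper leaves as a one-line observation.
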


\begin{proof}
First, assume that \eqref{condi: admissible} is satisfied. Set
\begin{equation*}
u_i (h) = \rho_i^{d_i} (h) \wedge \nu_i^{d_0} (h) \ \ \ \text{ for } h \in \bar J_i \text{ and } i \in \{ 1, 2, 3 \},
\end{equation*}
observe that $u_i \in \mathcal S_i \cap C(\bar J_i)$, $u_i (h_i) = d_i$, and $u_i (0) = d_0$ for all $i \in \{ 1, 2, 3 \}$,
and conclude that $(u_1, u_2, u_3)$ is a viscosity solution of \eqref{HJ}.

Now, assume that \eqref{HJ} has
a viscosity solution $(u_1, u_2, u_3) \in C(\bar J_1) \times C(\bar J_2) \times C(\bar J_3)$. 
Obviously, $(d_0, d_1, d_2, d_3) \in I_0 \times I_1 \times I_2 \times I_3$,
$\rho_i^{d_i} \geq u_i$ and $\nu_i^{d_0} \geq u_i$ on $\bar J_i$ for all $i \in \{ 1, 2, 3 \}$.
Moreover we see that $\rho_i^{d_i} (0) \geq u_i (0) = d_0$
and $\nu_i^{d_0} (h_i) \geq u_i = d_i$ for all $i \in \{ 1, 2, 3 \}$.
Thus \eqref{condi: admissible} is valid.  
\end{proof}

We set
\begin{equation*}\mathcal{D} = \{ (d_0, d_1, d_2, d_3) \in \mathbb R^4 \ | \
\text{ \eqref{condi: admissible} is satisfied } \},
\end{equation*}
and 
\begin{align*}
\mathcal D_0 = \{ (d_0, &d_1, d_2, d_3) \in \mathbb R^4 \mid \\&\text{ there exists $a > 0$ such that }
(d_0 + a, d_1 + a, d_2 + a, d_3 + a)\in \mathcal D \}. 
\end{align*}

\begin{lem} \label{D_0}
Let $(d_0, d_1, d_2, d_3) \in \mathcal D_0$.
Then 
\begin{equation*}
d_0 < \min_{i \in \{ 1, 2, 3 \}} \rho_i^{d_i} (0). 
\end{equation*}
\end{lem}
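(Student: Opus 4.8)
The claim is a monotonicity/strict-inequality statement: if $(d_0,d_1,d_2,d_3)\in\mathcal D_0$, then in fact $d_0<\min_i\rho_i^{d_i}(0)$ (not merely $\le$, which is part of \eqref{condi: admissible}). The natural strategy is to exploit the defining property of $\mathcal D_0$: there is $a>0$ with $(d_0+a,d_1+a,d_2+a,d_3+a)\in\mathcal D$, hence by the second condition in \eqref{condi: admissible} applied to the shifted data,
\begin{equation*}
\min_{i\in\{1,2,3\}}\rho_i^{d_i+a}(0)\ \ge\ d_0+a.
\end{equation*}
So it suffices to compare $\rho_i^{d_i}(0)$ with $\rho_i^{d_i+a}(0)$ and show the former is strictly larger than $d_0$. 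The key observation is the elementary ``shift-invariance'' of subsolution sets noted in the excerpt (just before the proof of Lemma~\ref{nu_i^d}): if $u\in\mathcal S_i^-$ then $u-c\in\mathcal S_i^-$ for any constant $c>0$ — and more is true, since $\lambda>0$: if $\lambda u+\overline G_i(h,u')\le 0$ then $\lambda(u+c)+\overline G_i(h,(u+c)')=\lambda(u+c)+\overline G_i(h,u')\le\lambda c>0$ is \emph{not} automatic, so one must be slightly careful about the direction. The correct direction is: subtracting a positive constant from a subsolution keeps it a subsolution. Hence, from a competitor $u$ for $\rho_i^{d_i+a}$ (so $u\in\mathcal S_i^-\cap C(\bar J_i)$, $u(h_i)=d_i+a$), the function $u-a$ lies in $\mathcal S_i^-\cap C(\bar J_i)$ with $(u-a)(h_i)=d_i$, so it is a competitor for $\rho_i^{d_i}$. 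Taking suprema,
\begin{equation*}
\rho_i^{d_i}(0)\ \ge\ \rho_i^{d_i+a}(0)-a\quad\text{for each }i,
\end{equation*}
and therefore $\min_i\rho_i^{d_i}(0)\ge\min_i\rho_i^{d_i+a}(0)-a\ge(d_0+a)-a=d_0$.

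**Upgrading $\ge$ to $>$.** The display above only recovers the non-strict inequality, so the heart of the argument is to gain the strict gap. I would do this by not throwing away all of $a$: instead of shifting by the full amount $a$, shift by $a/2$. From $(d_0+a,\dots)\in\mathcal D$ one still gets $\rho_i^{d_i+a}(0)\ge d_0+a$; applying the shift-down by $a/2$ to competitors for $\rho_i^{d_i+a}$ shows
\begin{equation*}
\rho_i^{d_i+a/2}(0)\ \ge\ \rho_i^{d_i+a}(0)-\tfrac a2\ \ge\ d_0+\tfrac a2,
\end{equation*}
and then shifting down by a further $a/2$ to pass from boundary value $d_i+a/2$ to $d_i$ gives
\begin{equation*}
\rho_i^{d_i}(0)\ \ge\ \rho_i^{d_i+a/2}(0)-\tfrac a2\ \ge\ d_0+\tfrac a2\ >\ d_0,
\end{equation*}
for every $i\in\{1,2,3\}$. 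Taking the minimum over $i$ yields $d_0<\min_i\rho_i^{d_i}(0)$, which is exactly the assertion of Lemma~\ref{D_0}.

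**Where the care is needed.** The one genuinely non-routine point is verifying that $u-c\in\mathcal S_i^-$ for $c>0$ when $u\in\mathcal S_i^-$, in the viscosity sense on $\bar J_i$: this is where $\lambda>0$ is used, since the test-function inequality $\lambda\,\phi(\hat h)+\overline G_i(\hat h,\phi'(\hat h))\le 0$ at a maximum of $(u-c)-\phi$ — equivalently a maximum of $u-(\phi+c)$ — gives $\lambda\,(\phi(\hat h)+c)+\overline G_i(\hat h,\phi'(\hat h))\le 0$, hence $\lambda\,\phi(\hat h)+\overline G_i(\hat h,\phi'(\hat h))\le-\lambda c<0\le 0$, so $u-c$ is indeed a subsolution (and the boundary and continuity conditions are trivially preserved). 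This is exactly the remark recorded in the paper before Lemma~\ref{nu_i^d}, so it may simply be cited. Everything else — the set-inclusion of competitors, taking suprema, and composing two shifts — is formal. I do not anticipate any serious obstacle; the only thing to be vigilant about is bookkeeping the shift amounts so that a strictly positive residual ($a/2$, or any fixed fraction of $a$) survives into the final inequality.
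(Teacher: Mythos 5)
There is a genuine gap at exactly the step you flagged as the heart of the argument: the passage from $\geq$ to $>$ does not go through. Your final chain reads
\[
\rho_i^{d_i}(0)\ \ge\ \rho_i^{d_i+a/2}(0)-\tfrac a2\ \ge\ d_0+\tfrac a2\ >\ d_0,
\]
but the bound you have in hand is $\rho_i^{d_i+a/2}(0)\ge d_0+\tfrac a2$, which only yields $\rho_i^{d_i+a/2}(0)-\tfrac a2\ge d_0$, not $\ge d_0+\tfrac a2$. So the chain collapses back to $\rho_i^{d_i}(0)\ge d_0$, which is just the non-strict inequality you started from. More fundamentally, the shift estimate $\rho_i^{d_i}(0)\ge\rho_i^{d_i+c}(0)-c$ is non-strict for every $c>0$, and iterating it (in halves or in any other partition of $a$) telescopes exactly: the shifts used on the left cancel against the slack $a$ on the right, and no strict gap survives. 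No amount of bookkeeping rescues this; a new idea is required.

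The paper gets the strict gap by exploiting the \emph{strict-subsolution slack}, which your argument never uses. After shifting, $u:=\rho_i^{d_i+a}-a$ satisfies $\lambda u+\overline G_i(h,u')\le-\lambda a<0$ in $J_i$. The paper then picks a cutoff $\psi\in C^1(\bar J_i)$ with $\psi\equiv1$ near $0$ and $\psi\equiv0$ near $h_i$, so that $\psi'$ is compactly supported in $J_i$, and considers $u_\varepsilon:=u+\varepsilon\psi$. On $\mathrm{supp}\,\psi'$ one uses the Lipschitz bound on $u$ and uniform continuity of $\overline G_i$ on a compact set to see that, for $\varepsilon$ small, the extra term $\varepsilon\psi'$ perturbs $\overline G_i$ by at most $\lambda a/2$, which the slack $-\lambda a$ absorbs; hence $u_\varepsilon\in\mathcal S_i^-$, with $u_\varepsilon(h_i)=d_i$ unchanged but $u_\varepsilon(0)=u(0)+\varepsilon$ strictly larger. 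This gives $\rho_i^{d_i}(0)\ge u_\varepsilon(0)=\rho_i^{d_i+a}(0)-a+\varepsilon>\rho_i^{d_i+a}(0)-a\ge d_0$. The perturbation-by-cutoff step — trading slack for a strict bump at $0$ while keeping the boundary value at $h_i$ fixed — is the missing ingredient in your proposal.
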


\begin{proof}
Choose $ a > 0$ so that $(d_0 + a, d_1 + a, d_2 + a, d_3 + a) \in \mathcal D$. 
Fix any $i \in \{ 1, 2, 3 \}$ and note that
the function $u := \rho_i^{d_i + a} - a$ is a subsolution of 
\begin{equation*}
\lambda u + \overline G_i (h, u') = -\lambda a \ \ \ \text{ in } J_i.
\end{equation*}
Select a smooth function $\psi \in C^1(\bar J_i)$ so that 
$\psi (h) = 1$ in a neighborhood of $0$ and $\psi (h) =0$ in a neighborhood of $h_i$.
Accordingly, $\psi'$ is supported in $J_i$. 
Let $\varepsilon > 0$ and consider the function $u_\varepsilon := u + \varepsilon \psi$.
Let $M > 0$ be a Lipschitz bound of $u$ in $\mathrm{supp} \, \psi'$, 
note that $\overline G_i$ is uniformly continuous in $\mathrm{supp} \, \psi' \times [-M, M]$, and 
observe that if $\varepsilon > 0$ is sufficiently small, then 
\begin{equation*}
\overline G_i (h, u'(h) + \varepsilon \psi' (h)) \leq \overline G_i (h, u'(h)) + \frac{\lambda a}{2} \ \ \ \text{ for a.e. } h \in J_i.
\end{equation*}
Thus, if $\varepsilon \in (0, a/2)$ is sufficiently small, then we have
\begin{equation*}
\lambda u_\varepsilon + \overline G_i (h, u'_\varepsilon) \leq 0 \ \ \ \text{ in } J_i
\end{equation*}
in the viscosity sense.
Fix such $\varepsilon > 0$ and observe that
$u_\varepsilon (h_i) = u(h_i) = d_i$ and,
by the definition of $\rho_i^{d_i}$, that $\rho_i^{d_i} \geq u_\varepsilon$ on $\bar J_i$.
Since $u_\varepsilon (0) = \rho_i^{d_i + a} (0) - a + \varepsilon > \rho^{d_i + a} (0) - a$, 
we get $\rho_i^{d_i} (0) > \rho_i^{d_i + a} (0) - a$. 
Since $(d_0 + a, d_1 + a, d_2 + a, d_3 + a) \in \mathcal D$,
we have $d_0 + a \leq \rho_i^{d_i+a} (0)$.
Combining this with the inequality above, we get $d_0 < \rho_i^{d_i} (0)$.
This is true for all $i \in \{ 1, 2, 3 \}$ and the proof is complete.
\end{proof}

\begin{thm}
For any $(d_0, d_1, d_2, d_3) \in \mathcal D_0$, \emph{(G5)} and \emph{(G6)} hold for 
some boundary data $g^\varepsilon$. 
\end{thm}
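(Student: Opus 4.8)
The plan is to exhibit, for each small $\varepsilon>0$, a continuous subsolution $W^\varepsilon$ of \eqref{epHJ} on a neighbourhood of $\overline\Omega$ which converges, as $\varepsilon\to0+$, uniformly on $\overline\Omega$ to the function $u_0$ produced from the limiting odes, and then to take $g^\varepsilon:=W^\varepsilon|_{\partial\Omega}$. Once this is done, (G5) would follow from the sub-dynamic-programming inequality for subsolutions (cf. \cites{BCD, L}): for any $\alpha\in L^\infty(\mathbb R;\mathbb R^2)$, any $\vartheta\ge0$, and any trajectory $X^\varepsilon(\cdot,x,\alpha)$ staying in $\overline\Omega$ on $[0,\vartheta]$, one has $W^\varepsilon(x)\le\int_0^\vartheta L(X^\varepsilon(t,x,\alpha),\alpha(t))e^{-\lambda t}\,dt+W^\varepsilon(X^\varepsilon(\vartheta,x,\alpha))e^{-\lambda\vartheta}$, and specializing $x$ and $y=X^\varepsilon(\vartheta,x,\alpha)$ to $\partial\Omega$ gives precisely the inequality in (G5); the uniform convergence of $W^\varepsilon$ makes $\{g^\varepsilon\}_{\varepsilon}$ uniformly bounded on $\partial\Omega$.

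To set up $u_0$, I would first observe that $\mathcal D_0\subset\mathcal D$: Lemma \ref{D_0} already gives $d_0<\min_i\rho_i^{d_i}(0)$, and from $(d_0+a,\dots,d_3+a)\in\mathcal D$ the remaining conditions in \eqref{condi: admissible} for $(d_0,\dots,d_3)$ follow by downward-closedness of $I_0,I_1,I_2,I_3$ and by the fact that $\nu_i^{d_0+a}-a\in\mathcal S_i^-\cap C(\bar J_i)$ takes the value $d_0$ at $0$. Hence Theorem \ref{exist} yields a solution $(u_1,u_2,u_3)$ of \eqref{HJ} with $u_i\in\mathcal S_i\cap C(\bar J_i)$, $u_i(h_i)=d_i$, $u_i(0)=d_0$ (concretely $u_i=\rho_i^{d_i}\wedge\nu_i^{d_0}$), and I set $u_0:=u_i\circ H$ on $\overline\Omega_i$, which is continuous on $\overline\Omega$ since $u_1(0)=u_2(0)=u_3(0)$. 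I also record, from $\rho_i^{d_i}\in\mathcal S_i^-$ and Lemma \ref{lim-min-barG_i} (letting $h\to0$), that $\lambda d_0+G(0,0)<\lambda\rho_i^{d_i}(0)+G(0,0)\le0$; thus $d_0$ is a \emph{strict} subsolution of \eqref{lim-HJ} at the junction, which is what provides room for correctors near $\{H=0\}$.

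Next, $W^\varepsilon$ would be built by the perturbed test function method and glued. Near the junction: choose $\eta>0$ with $\lambda d_0+G(0,0)<-\eta$ and apply Lemma \ref{key} to obtain $\delta\in(0,h_0)$ and $\psi_0\in C^1(\overline{\Omega(\delta)})$ with $-b\cdot D\psi_0+G(x,0)<G(0,0)+\eta$; then, exactly as in the computation \eqref{ineq: w^ep}, $w^\varepsilon:=d_0+\varepsilon\psi_0$ is a subsolution of \eqref{epHJ} in $\Omega(\delta)$ for $\varepsilon$ small. On each sheet with $|H|\ge\delta$: mollify $u_i$ (using convexity of $\overline G_i$ and the a.e.\ bound \eqref{abs-con}) into a $C^1$ strict subsolution $\phi_i$ of \eqref{lim-HJ} with $\|\phi_i-u_i\|_\infty$ small, then, following the proof of Theorem \ref{characterize}, solve the cell problem $-b\cdot D\psi_i=-f_i+\bar f_i$, with $f_i$ a $C^1$ approximation of $x\mapsto G(x,\phi_i'\circ H(x)DH(x))$ and $\bar f_i$ its average over the loops $c_i(\cdot)$, to get a $C^1$ corrector $\psi_i$, bounded on $\{|H|\ge\delta\}$ for fixed $\delta$, such that $W_i^\varepsilon:=\phi_i\circ H+\varepsilon\psi_i$ is a subsolution of \eqref{epHJ} on $\{x\in\Omega_i:|H(x)|>\delta\}$. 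The global $W^\varepsilon$ is assembled from $w^\varepsilon$ and the three $W_i^\varepsilon$ by the usual trick of taking minima after multiplying by cut-offs supported near the seams $c_i((-1)^i\delta)$; the strict gap $d_0<\min_i\rho_i^{d_i}(0)$ supplies the one-sided ordering of the candidates on the overlaps needed for this to yield a genuine subsolution, and, since every corrector is bounded on its domain, a diagonal argument letting the regularization parameters and $\delta$ tend to $0$ produces $W^\varepsilon\to u_0$ uniformly on $\overline\Omega$. This paragraph, and especially the surgery across the junction circle $\{H=0\}$ with uniform control (the correctors degenerate as $T_i(h)\to\infty$, forcing a careful interleaving of the scales $\delta$, the regularization, and $\varepsilon$), is the main obstacle; it is of the same nature as the constructions in Sections 5 and 6.

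Finally, for (G6): from $W^\varepsilon\to u_0$ uniformly and $u_0=d_i$ on $\partial_i\Omega$ we get $g^\varepsilon\to d_i$ uniformly on $\partial_i\Omega$, so by Proposition \ref{viscosity-sol} the solution $u^\varepsilon$ of \eqref{epHJ} is defined and $u^\varepsilon=g^\varepsilon$ on $\partial\Omega$, giving $v^-\le d_i\le v^+$ on $\partial_i\Omega$. For the opposite inequalities, the comparison principle for \eqref{epHJ} (as in the proof of Proposition \ref{viscosity-sol}) gives $W^\varepsilon\le u^\varepsilon$ on $\overline\Omega$, hence $v^-(x)\ge u_0(x)=d_i$ for $x\in\partial_i\Omega$; and the quick-exit barrier $Y^\varepsilon$ from that same proof, together with $\tau^\varepsilon(y)\le|h_i-H(y)|/(\nu c_0)\to0$ as $y\to\partial_i\Omega$ and the uniform convergence $g^\varepsilon\to d_i$ on $\partial_i\Omega$, gives $v^+(x)\le d_i$ for $x\in\partial_i\Omega$. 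Therefore $v^\pm=d_i$ on $\partial_i\Omega$ for all $i$, i.e.\ (G6) holds.
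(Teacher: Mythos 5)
Your broad strategy --- build continuous subsolutions $W^\varepsilon$ of \eqref{epHJ} from the junction corrector of Lemma \ref{key} and cell-problem correctors on the sheets, set $g^\varepsilon:=W^\varepsilon|_{\partial\Omega}$, read (G5) off the sub-dynamic-programming inequality, and get (G6) from comparison plus a boundary estimate --- is essentially the paper's, and your use of the quick-exit barrier $Y^\varepsilon$ from the proof of Proposition \ref{viscosity-sol} to get $v^+\le d_i$ on $\partial_i\Omega$ is a legitimate shortcut over the paper's explicit supersolutions $W_0^\varepsilon,W_i^\varepsilon$ and the Perron sandwich. The inclusion $\mathcal D_0\subset\mathcal D$ is also argued correctly. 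However, the gap you flag around the junction surgery hides the one idea that this section genuinely requires, and it is \emph{not} ``of the same nature as Sections 5 and 6.'' If you put the junction candidate at level $d_0=u_i(0)$ and the sheet candidates at $u_i\circ H$, then near $\{H=0\}$ the two agree to leading order: there is no slack for the strict ordering the patchwork needs, and it is unstable under the $O(\varepsilon)$ corrector and mollification errors. The paper's device --- the whole reason $\mathcal D_0$ rather than $\mathcal D$ appears in the hypothesis --- is to shift the data by $a$, work with the shifted solutions $v_i$, and place the junction candidate at a level $\gamma-a$ chosen strictly between $d_0$ and $\min_i v_i((-1)^i\delta)-a$. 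That strict sandwich yields $w_i^\varepsilon>w_0^\varepsilon$ on $c_i((-1)^i\delta)$ and $w_i^\varepsilon<w_0^\varepsilon$ on $c_i((-1)^i\delta_i/2)$, so the gluing is robust. The resulting $w^\varepsilon$ does \emph{not} converge to $u_0$; its trace on $\partial_i\Omega$ converges uniformly to $v_i(h_i)-a=d_i$, which is all that (G5) and (G6) need. Your target $W^\varepsilon\to u_0$ uniformly is therefore both harder than necessary and, with the Lemma \ref{key} corrector degenerating as $\delta\to0$, quite delicate.

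A secondary point: subsolutions of the convex equation \eqref{epHJ} glue by \emph{maximum}, not minimum. The paper defines $w^\varepsilon=w_0^\varepsilon\vee w_i^\varepsilon$ on each transition annulus $\Omega_i(\delta)\setminus\overline{\Omega_i(\delta_i/2)}$, and the two strict seam inequalities above are precisely what make the maximum continuous and globally defined (each candidate lives only on its side of the annulus). ``Taking minima after multiplying by cut-offs'' does not produce a subsolution and is not the paper's gluing mechanism; you evidently understand that the seam ordering matters, but the operation as written is the wrong one.
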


\begin{proof}
Choose $a>0$ so that $(d_0+3a,d_1+3a,d_2+3a,d_3+3a)\in\mathcal D$.
Set $a_i = d_i + a$ for $i \in \{ 0, 1, 2, 3 \}$ and
\begin{equation*}
v_i (h) = \rho_i^{a_i+ a} (h) \wedge \nu_i^{a_0 + a} (h) - a \ \ \ \text{ for } h \in \bar J_i \text{ and } i \in \{ 1, 2, 3 \},
\end{equation*}
and observe that $v_i (h_i) = a_i$ and $v_i (0) = a_0$ for all $i \in \{ 1, 2, 3 \}$.
Observing that $\rho_i^{a_i+ a} \wedge \nu_i^{a_0 + a} \in \mathcal S_i \cap C(\bar J_i)$ and
noting that $v_i$ are locally Lipschitz continuous in $\bar J_i \setminus \{ 0 \}$,
we may choose $c > 0$ so that 
\begin{equation} \label{ineq: v_i}
\lambda v_i (h) + \overline G_i (h, v_i' (h)) \leq -c \ \ \ \text{ for a.e. } h \in J_i \text{ and all } i \in \{ 1, 2, 3 \}.
\end{equation}
Noting that $\rho_i := \rho_i^{a_i + a} - a \in \mathcal S_i^- \cap C(\bar J_i)$ for all $i \in \{ 1, 2, 3 \}$,
we see that
\begin{equation*}
\lambda \rho_i (h) + \min_{q \in \mathbb R} \overline G_i (h, q) \leq 0 \ \ \ 
\text{ for all } h \in J_i \text{ and } i \in \{ 1, 2, 3 \},
\end{equation*}
and, hence, using Lemma \ref{lim-min-barG_i}, we get
\begin{equation*}
\lambda \min_{i \in \{ 1, 2, 3 \}} \rho_i (0) + G(0, 0) \leq 0.
\end{equation*}
Since $(a_0 + a, a_1 + a, a_2 + a, a_3 + a) \in \mathcal D_0$,
by Lemma \ref{D_0}, we see that
\begin{equation} \label{d_0+a}
a_0 < \min_{i \in \{ 1, 2, 3 \}} \rho_i (0),
\end{equation}
from which, we may choose $\eta > 0$ so that
\begin{equation*}
\lambda a_0 + G(0, 0) \leq -\eta. 
\end{equation*}
By Lemma \ref{key}, there exist
$\delta \in (0, h_0)$ and $\psi \in C^1 \big(\overline{\Omega (\delta)} \big)$ such that
\begin{equation*}
-b \cdot D\psi + G(x, 0) \leq G(0, 0) + \eta \ \ \ \text{ on } \overline{\Omega (\delta )}.
\end{equation*}
According to \eqref{d_0+a}, by applying Lemma \ref{nu_i^d} with $d = a_0$ and $d_0 = \min_{i = 1, 2, 3} \rho_i (0)$ and
by replacing $\delta \in (0, h_0)$ by a smaller number if necessary,
we may assume that
\begin{equation*}
v_i (h) > a_0 \ \ \ \text{ for all } h \in J_i \cap [-\delta, \delta ] \text{ and } i \in \{ 1, 2, 3 \}. 
\end{equation*}
Now we may choose $\gamma \in \big( a_0, \min_{i\in\{1,2,3\}}\rho_i(0) \big)$ so that
\begin{equation*}
\gamma < \min_{i \in \{ 1, 2, 3 \}} v_i ((-1)^i \delta ) \ \ \ \text{ and } \ \ \ \lambda \gamma + G(0, 0) < -\eta.
\end{equation*}

Setting
\begin{equation*}
w_0^\varepsilon (x) = \gamma - a + \varepsilon \psi (x) \ \ \ \text{ for } x \in \overline{\Omega (\delta )} \text{ and } \varepsilon \in (0, 1),
\end{equation*}
and computing as in \eqref{ineq: w^ep}, we see that there exists
$\varepsilon_0 \in (0, 1)$ such that if $\varepsilon \in (0, \varepsilon_0)$, then 
\begin{equation} \label{ineq: w_0^ep}
\lambda w_0^\varepsilon (x) - \cfrac{b(x) \cdot Dw_0^\varepsilon (x)}{\varepsilon} + G(x, Dw_0^\varepsilon (x)) \leq 0 \ \ \ \text{ for all } x \in \Omega (\delta ).
\end{equation}

Next, note that
there exist $\delta_i \in (0, \delta)$, with $i \in \{ 1, 2, 3 \}$, such that
\begin{equation*}
v_i ((-1)^i \delta_i ) = \gamma \ \ \ \text{ and } \ \ \ v_i (h) < \gamma
\end{equation*}
for all $h \in J_i \cap (-\delta_i, \delta_i )$ and $i\in\{1,2,3\}$.
Set
\begin{equation*}
J_i (h) = J_i \cup (h_i - h, h_i + h) \ \ \ \text{ for } h > 0 \text{ and } i \in \{ 1, 2, 3 \}.
\end{equation*}
Combining \eqref{ineq: v_i} and the fact that there exist $p_i \in \mathbb R$, 
with $i \in \{ 1, 2, 3 \}$, such that $\lim_{J_i \ni h \to h_i} v_i' (h) 
= p_i$ along a subsequence for all $i \in \{ 1, 2, 3 \}$, we have
\begin{equation*}
\lambda v_i (h_i) + \overline G_i (h_i, p_i) \leq -c \ \ \ \text{ for all } i \in \{ 1, 2, 3 \},
\end{equation*}
and, hence, for each $i \in \{ 1, 2, 3 \}$ and for some $r_i \in (0, \delta_i/2)$,
we can extend the domain of definition of $v_i$ to $\overline{J_i (r_i)}$ so that
\begin{equation*}
v_i (h) + \overline G_i (h, v_i' (h)) \leq 0 \ \ \ \text{ for a.e. } h \in J_i (r_i),
\end{equation*}
by setting
\begin{equation*}
v_i (h) = v_i (h_i) + p_i (h - h_i) \ \ \ \text{ for } h \in [h_i - r_i, h_i + r_i] \setminus \bar J_i.
\end{equation*}
Here we have used the fact that, in view of the definition of $\overline G_i$,
we may assume that, for each $i \in \{ 1, 2, 3 \}$, $\overline G_i$ is defined in $\overline{J_i (r_i)} \times \mathbb R$.

For $i \in \{ 1, 2, 3 \}$, $r \in (0, r_i/2)$, and $h \in J_i (r_i/2) \setminus [-\delta_i/2, \delta_i/2]$, set
\begin{equation*}
v_i^r (h) = \varphi_r \ast v_i (h), 
\end{equation*} 
where $\varphi_r (h) = (1/r)\varphi ((1/r)h)$ and
$\varphi \in C^\infty (\mathbb R)$ is a standard mollification kernel, that is,
$\varphi \geq 0$, supp $\varphi \subset [-1, 1]$, and $\int_\mathbb R \varphi \, dx = 1$.

Due to the local Lipschitz continuity of 
$v_i$ in $\overline{J_i (r_i)} \setminus \{ 0 \}$,
there exist $C_i > 0$, with $i \in \{ 1, 2, 3 \}$, such that
\begin{equation*}
|v_i' (h)| \leq C_i
\end{equation*} 
for a.e. $h \in \overline{J_i (r_i)} \setminus (-(\delta_i - r_i)/2, 
(\delta_i - r_i)/2)$. 
For $i \in \{ 1, 2, 3 \}$, let $m_i$ be a modulus of $\overline G_i$
on $\overline{J_i (r_i)} \setminus (-(\delta_i - r_i)/2, (\delta_i -r_i)/2) \times [-C_i, C_i]$.

Fix any $i \in \{ 1, 2, 3 \}$, $r \in (0, r_i/2)$, and $h \in J_i \setminus [-\delta_i/2, \delta_i/2]$, and compute that
\begin{align*}
0 &\geq \lambda \varphi_r \ast v_i (h) + \varphi_r \ast \overline G_i (\cdot, v_i' (\cdot )) (h) \\
   &= \lambda v_i^r (h) + \int_{h - r}^{h + r} \varphi_r (h - s) \overline G_i (s, v_i' (s)) \, ds \\
   &\geq \lambda v_i^r (h) + \int_{h - r}^{h + r} \varphi_r (h - s) \big( \overline G_i (h, v_i' (s)) - m_i (r) \big) \, ds \\
   &\geq \lambda v_i^r (h) + \overline G_i (h, \varphi_r \ast v_i' (h)) - m_i (r) \\
   &= \lambda v_i^r (h) + \overline G_i (h, (v_i^r)' (h)) - m_i (r).
\end{align*}
Here we have used Jensen's inequality in the third inequality. 
Moreover we have
\begin{equation*}
|v_i (h) - v_i^r (h)| \leq \int_{h - r}^{h + r} |v_i (h) - v_i (s)| \varphi_r (h - s) \, ds \leq C_i r,
\end{equation*}
and, hence, we get
\begin{equation*} \label{eq: v_i-v_i^r}
\lambda v_i (h) + \overline G_i (h, (v_i^r)' (h)) \leq m_i (r) + \lambda C_i r
\end{equation*}
for all $h \in J_i \setminus \left[-\delta_i/2,\, \delta_i/2 \right]$, 
$r\in(0,\,r_i/2)$, and $i \in \{ 1, 2, 3 \}$.

Fix a small $r \in (0, r_i/2)$.
For $i \in \{ 1, 2, 3 \}$, we define the function $g_i \in C(\overline \Omega_i \setminus \Omega_i (\delta_i/2))$ by
\begin{equation*}
g_i (x) = G(x, (v_i^r)' \circ H(x)DH(x)),
\end{equation*}
and choose $f_i \in C^1 (\overline \Omega_i \setminus \Omega_i (\delta_i/2))$ so that
\begin{equation*}
|g_i (x) - f_i (x)| < r \ \ \ \text{ for all } x \in \overline \Omega_i \setminus \Omega_i \left( \frac{\delta_i}{2} \right).
\end{equation*}

Let $\tau_i$ are the functions defined in \eqref{def-tau_i},
and, for $i \in \{ 1, 2, 3 \}$,
let $\psi_i$ be the function on $\overline \Omega_i \setminus \Omega_i (\delta_i/2)$ defined by
\begin{equation*}
\psi_i (x) = \int_0^{\tau_i (x)} \Big( f_i (X(t, x)) - \bar f_i (x) \Big) \, dt,
\end{equation*}
where
\begin{equation*}
\bar f_i (x) := \frac{1}{T_i \circ H(x)} \int_0^{T_i \circ H(x)} f_i (X(t, x)) \, dt. 
\end{equation*}

Recalling that 
$X \in C^1 (\mathbb R \times \mathbb R^2)$, that  
$\tau_i \in C^1 \left( \left( \overline \Omega_i \setminus c_i (0) \right) \setminus l_i \right)$ and $T_i \in C^1 (\bar J_i \setminus \{ 0 \})$ for all $i\in\{1,2,3\}$,
and that the definition of $\tilde \tau_i$,
it is clear that $\psi_i \in C^1 (\overline \Omega_i \setminus \Omega_i (\delta_i/2))$ for all $i \in \{ 1, 2, 3 \}$.
By using the dynamic programming principle,
we see that
\begin{equation*}
-b(x) \cdot D\psi_i(x) = -f_i(x) + \bar f_i(x) 
\ \ \ \text{ for all } 
x \in \overline \Omega_i \setminus \Omega_i \left( \frac{\delta_i}{2} \right)
\text{ and } i \in \{ 1, 2, 3 \}.
\end{equation*}

For $i \in \{ 1, 2, 3 \}$, $\varepsilon \in (0, \varepsilon_0)$,
and $x \in \overline \Omega_i \setminus \Omega_i (\delta_i/2)$, we set
\begin{equation*}
w_i^\varepsilon (x) = v_i \circ H(x) - a + \varepsilon \psi_i (x), 
\end{equation*}
and observe that $w_i^\varepsilon \in \mathrm{Lip} \big( \overline \Omega_i \setminus \Omega_i (\delta_i/2) \big)$.
Fix any $i \in \{ 1, 2, 3 \}$ and $\varepsilon \in (0, \varepsilon_0)$,
and compute that, for almost every $x \in \Omega_i \setminus \overline{\Omega_i (\delta_i/2)}$,
\begin{align*}
\lambda &w_i^\varepsilon (x) - \frac{b (x) \cdot Dw_i^\varepsilon (x)}{\varepsilon} + G(x, Dw_i^\varepsilon (x)) \\
           &= \lambda v_i \circ H(x) - \lambda a + \varepsilon \lambda \psi_i (x) - b(x) \cdot D\psi_i (x) \\
           & \quad + G\big( x, v_i' \circ H(x)DH(x) + \varepsilon D\psi_i (x) \big) \\
           &= \lambda v_i \circ H(x) - \lambda a + \varepsilon \lambda \psi_i (x) - f_i (x) + \bar f_i (x) \\ 
           & \quad + G\big( x, v_i' \circ H(x)DH(x) + \varepsilon D\psi_i (x) \big) \\
           &< \lambda v_i \circ H(x) - \lambda a + \varepsilon \lambda \psi_i (x) - g_i (x) + r \\
           & \quad + \frac{1}{T_i \circ H(x)} \int_0^{T_i \circ H(x)} \Big( g_i (X(t, x)) + r \Big) \, dt + G(x, v_i' \circ H(x)DH(x) + \varepsilon D\psi_i (x)) \\
           &\leq  - \lambda a + \varepsilon \lambda \psi_i (x) + m_i (r) + (2 + \lambda C_i)r \\ 
           & \quad - G\big( x, (v_i^r)' \circ H(x)DH(x)\big) + G\big( x, v_i' \circ H(x)DH(x) + \varepsilon D\psi_i (x)\big),
\end{align*}
from which,
by replacing $r \in (0, r_i)$ and $\varepsilon_0 \in (0, 1)$ by smaller numbers if necessary,
we may assume that if $\varepsilon \in (0, \varepsilon_0)$, then,
\begin{equation} \label{ineq: w_i^ep}
\lambda w_i^\varepsilon (x) - \frac{b (x) \cdot Dw_i^\varepsilon (x)}{\varepsilon} + G(x, Dw_i^\varepsilon (x)) \leq 0,
\end{equation}
for a.e. $x \in \Omega_i \setminus \overline{\Omega_i (\delta_i/2)}$ and for all $i \in \{ 1, 2, 3 \}$. 

Observing that
\begin{align}\label{W_0-bound1} 
\lim_{\varepsilon \to 0} w_0^\varepsilon (x) &= \gamma - a \ \ \ \text{ uniformly for } x \in \overline{\Omega (\delta )}, \\ \label{W_0-bound2}
 \lim_{\varepsilon \to 0} w_i^\varepsilon (x) &= v_i \circ H(x) - a \ \ \ \text{ uniformly for } x \in \overline{\Omega}_i \setminus \Omega_i (\delta_i/2 ),
\end{align}
and
\begin{equation}\label{W_0-bound3} 
v_i ((-1)^i \delta ) > \gamma \ \ \ \text{ and } \ \ \ v_i \left( \frac{(-1)^i \delta_i}{2} \right) < \gamma \ \ \ \text{ for all } i \in \{ 1, 2, 3 \},
\end{equation}
by replacing $\varepsilon_0 \in (0, 1)$ by a smaller number if necessary,
we may assume that if $\varepsilon \in (0, \varepsilon_0)$,
then, for all $i \in \{ 1, 2, 3 \}$,
\begin{equation*}
w_i^\varepsilon > w_0^\varepsilon \ \ \ \text{ on } \ c_i ((-1)^i \delta ) \ \ \ \text{ and } \ \ \ w_i^\varepsilon < w_0^\varepsilon \ \ \ \text{ on } c_i \left(\frac{(-1)^i \delta_i}{2} \right).
\end{equation*}

For $\varepsilon \in (0, \varepsilon_0)$, we set
\begin{align*}
w^\varepsilon (x) = 
\begin{cases}
w_0^\varepsilon (x)                                 \ \ \ &\text{ if } x \in 
\overline{\Omega_i (\delta_i/2 )} \text{ and } i \in \{ 1, 2, 3 \}, \\
w_0^\varepsilon (x) \vee w_i^\varepsilon (x) \ \ \ &\text{ if } x \in \Omega_i (\delta ) \setminus \overline{\Omega_i (\delta_i/2 )} \text{ and } i \in \{ 1, 2, 3 \}, \\
w_i^\varepsilon (x)                                  \ \ \ &\text{ if } x \in \overline \Omega_i \setminus \Omega_i (\delta ) \text{ and } i \in \{ 1, 2, 3 \}.
\end{cases}
\end{align*}
Noting that $w^\varepsilon \in \mathrm{Lip} (\overline \Omega)$,
by \eqref{ineq: w_0^ep} and \eqref{ineq: w_i^ep},
we have, for all $\varepsilon \in (0, \varepsilon_0)$,
\begin{equation} \label{subsol-w^ep} 
w^\varepsilon - \frac{b \cdot Dw^\varepsilon}{\varepsilon} + G(x, Dw^\varepsilon) \leq 0 \ \ \ \text{ in } \Omega,
\end{equation}
in the viscosity sense.

Now we set
\begin{equation*}
g^\varepsilon (x) = w^\varepsilon (x) \ \ \ \text{ for } \varepsilon \in (0, \varepsilon_0 ) \text{ and } x \in \partial \Omega.
\end{equation*}

Now we intend to show that (G5) and (G6) hold. 
For $i \in \{ 1, 2, 3 \}$, $\varepsilon \in (0, \varepsilon_0)$, and $x \in \overline \Omega_i \setminus \Omega_i (\delta_i/2)$
and for some constant $C > 0$, set
\begin{equation*}
W_i^\varepsilon (x) = w_i^\varepsilon (x) + (-1)^i C (h_i - H(x)),
\end{equation*}
and observe that $W_i^\varepsilon \in \mathrm{Lip} \big( \overline \Omega_i \setminus \Omega_i (\delta_i/2) \big)$,
$W_i^\varepsilon = g^\varepsilon$ on $\partial_i \Omega$, and
$W_i^\varepsilon \geq w_i^\varepsilon$ on $\overline \Omega_i \setminus \Omega_i (\delta_i/2)$.
Fix any $i \in \{ 1, 2, 3 \}$ and $\varepsilon \in (0, \varepsilon_0)$
and compute that, for almost every $x \in \Omega_i \setminus \overline{\Omega_i (\delta_i/2)}$,
\begin{align*}
\lambda &W_i^\varepsilon (x) - \frac{b (x) \cdot DW_i^\varepsilon (x)}{\varepsilon} + G(x, DW_i^\varepsilon (x)) \\
           &= \lambda v_i \circ H(x) - \lambda a + \varepsilon \lambda \psi_i (x) + (-1)^i \lambda C (h_i - H(x)) - b(x) \cdot D\psi_i (x) \\
           & \quad + G \big( x, v_i' \circ H(x)DH(x) + \varepsilon D\psi_i (x) - (-1)^i CDH(x) \big),
\end{align*}
from which, in view of the coercivity of $G$,  
by replacing $C > 0$, independently of $\varepsilon \in (0, \, \varepsilon_0)$, by a larger number if necessary, we conclude that
\begin{equation*}
\lambda W_i^\varepsilon (x) - \frac{b (x) \cdot DW_i^\varepsilon (x)}{\varepsilon} + G(x, DW_i^\varepsilon (x)) \geq 0.
\end{equation*}
We note that
\begin{equation*}
\lim_{\varepsilon \to 0+} W_i^\varepsilon (x) = W_i \circ H (x) := v_i \circ H (x) - a + (-1)^i C(h_i - H(x))
\end{equation*}
uniformly for $x \in \overline \Omega_i \setminus \Omega_i (\delta_i/2)$ and that $W_i \in \mathrm{Lip} \big( \bar J_i \setminus (-\delta_i/2, \, \delta_i/2) \big)$ and $W_i (h_i) = d_i$.

Next, set $M = \max_{\overline \Omega} |G(x, 0)|$ and
\begin{equation*}
W_0^\varepsilon (x) = (M/\lambda) \vee \max_{\overline \Omega} w^\varepsilon \ \ \ \text{ for } x \in \overline \Omega \text{ and } \varepsilon \in (0, \varepsilon_0).
\end{equation*}
It is now easily seen that for all $\varepsilon \in (0, \varepsilon_0)$ and $x \in \overline \Omega$,
$W_0^\varepsilon (x) \geq w^\varepsilon (x)$ and 
\begin{equation*}
\lambda W_0^\varepsilon (x) - \frac{b (x) \cdot DW_0^\varepsilon (x)}{\varepsilon} + G(x, DW_0^\varepsilon (x)) \geq 0.
\end{equation*}
Moreover, combining \eqref{W_0-bound1}, \eqref{W_0-bound2}, and \eqref{W_0-bound3},
we see from the definition of $w^\varepsilon$ that
\begin{equation*}
\lim_{\varepsilon \to 0+} W_0^\varepsilon (x) = W_0 (x) := (M/\lambda) \vee \max_{i \in \{ 1, 2, 3 \}} \max_{\overline \Omega_i \setminus \Omega_i (\delta_i/2)} (v_i \circ H - a)
\end{equation*}
uniformly for $x \in \overline \Omega$.

Now, we note that
\begin{equation*}
W_i^\varepsilon (x) = w_i^\varepsilon (x) = g^\varepsilon (x) \leq W_0^\varepsilon (x)
\end{equation*}
for all $x \in \partial_i \Omega$, $\varepsilon \in (0, \varepsilon_0)$, and $i \in \{ 1, 2, 3 \}$.
By replacing $C > 0$ by a larger number if necessary, we may assume that
\begin{equation*}
W_i^\varepsilon (x) > W_0^\varepsilon (x)
\end{equation*}
for all  $x \in \Omega_i (\delta)$, $\varepsilon \in (0, \varepsilon_0)$, and $i \in \{ 1, 2, 3 \}$.

For $\varepsilon \in (0, \varepsilon_0)$, we set
\begin{equation*}
W^\varepsilon (x) = W_0^\varepsilon (x) \wedge W_i^\varepsilon (x) \ \ \  \text{ if } x \in \overline \Omega_i \text{ and } i \in \{ 1, 2, 3 \},
\end{equation*}
and observe that $W^\varepsilon \in \mathrm{Lip} (\overline \Omega)$, $W^\varepsilon$ is a viscosity supersolution of \eqref{epHJ},
$W^\varepsilon = g^\varepsilon$ on $\partial \Omega$, $W^\varepsilon \geq w^\varepsilon$ on $\overline \Omega$, and 
\begin{equation*}
\lim_{\varepsilon \to 0+} W^\varepsilon (x) = W(x) := W_0 (x) \wedge W_i \circ H (x)
\end{equation*}
uniformly for  $x \in \overline \Omega_i$ and for all $i \in \{ 1, 2, 3 \}$.
It is obvious that $W_0 \wedge W_i \circ H \in \mathrm{Lip} (\overline \Omega_i)$
and $W_0 \wedge W_i \circ H = d_i$ on $\partial_i \Omega$ for all $i \in \{ 1, 2, 3 \}$.

Now, by Perron's method and the comparison principle,
there exists a unique viscosity solution $u^\varepsilon \in C(\overline \Omega)$ of \eqref{epHJ} satisfying $u^\varepsilon = g^\varepsilon$ on $\partial \Omega$
such that  
\begin{equation*}
w^\varepsilon \leq u^\varepsilon \leq W^\varepsilon \ \ \ \text{ on } \overline \Omega,
\end{equation*}
and, hence, in view of Proposition \ref{viscosity-sol}, (G5) holds.
Also, the inequality above yields, for all $i \in \{ 1, 2, 3 \}$,
\begin{equation*}
v_i \circ H - a \leq v^- \leq v^+ \leq W_i \circ H
\end{equation*}
in a neighborhood of $\partial \Omega_i$,
and, therefore, for all $i \in \{ 1, 2, 3 \}$ and $x \in \partial_i \Omega$,
\begin{align*}
d_i = \lim_{\Omega_i \ni y \to x} v_i \circ H(y) - a &\leq \lim_{\Omega_i \ni y \to x} v^- (y) \\
                                                                 &\leq \lim_{\Omega_i \ni y \to x} v^+ (y) \leq \lim_{\Omega_i \ni y \to x} W_i \circ H (y) = d_i.
\end{align*}
This implies (G6). The proof is complete.

\end{proof}

\subsection*{Acknowledgments}

The author would like to thank Prof. Hitoshi Ishii
for many helpful comments and discussions about
the singular perturbation problem of Hamilton-Jacobi equations treated here.
The author would like to thank the anonymous referee for careful reading, 
valuable comments and pointing out several errors.

\begin{bibdiv}
\begin{biblist}
\bib{ACCT}{article}{
   author={Achdou, Yves},
   author={Camilli, Fabio},
   author={Cutr{\`{\i}}, Alessandra},
   author={Tchou, Nicoletta},
   title={Hamilton-Jacobi equations constrained on networks},
   journal={NoDEA Nonlinear Differential Equations Appl.},
   volume={20},
   date={2013},
   number={3},
   pages={413--445},
   issn={1021-9722},
   review={\MR{3057137}},
   doi={10.1007/s00030-012-0158-1},
}

\bib{AT}{article}{
   author={Achdou, Yves},
   author={Tchou, Nicoletta},
   title={Hamilton-Jacobi equations on networks as limits of singularly
   perturbed problems in optimal control: dimension reduction},
   journal={Comm. Partial Differential Equations},
   volume={40},
   date={2015},
   number={4},
   pages={652--693},
   issn={0360-5302},
   review={\MR{3299352}},
   doi={10.1080/03605302.2014.974764},
}

\bib{BCD}{book}{
   author={Bardi, Martino},
   author={Capuzzo-Dolcetta, Italo},
   title={Optimal control and viscosity solutions of Hamilton-Jacobi-Bellman
   equations},
   series={Systems \& Control: Foundations \& Applications},
   note={With appendices by Maurizio Falcone and Pierpaolo Soravia},
   publisher={Birkh\"auser Boston, Inc., Boston, MA},
   date={1997},
   pages={xviii+570},
   isbn={0-8176-3640-4},
   review={\MR{1484411 (99e:49001)}},
   doi={10.1007/978-0-8176-4755-1},
}

\bib{B}{book}{
   author={Barles, Guy},
   title={Solutions de viscosit\'e des \'equations de Hamilton-Jacobi},
   language={French, with French summary},
   series={Math\'ematiques \& Applications (Berlin) [Mathematics \&
   Applications]},
   volume={17},
   publisher={Springer-Verlag, Paris},
   date={1994},
   pages={x+194},
   isbn={3-540-58422-6},
   review={\MR{1613876 (2000b:49054)}},
}

\bib{CIL}{article}{
   author={Crandall, Michael G.},
   author={Ishii, Hitoshi},
   author={Lions, Pierre-Louis},
   title={User's guide to viscosity solutions of second order partial
   differential equations},
   journal={Bull. Amer. Math. Soc. (N.S.)},
   volume={27},
   date={1992},
   number={1},
   pages={1--67},
   issn={0273-0979},
   review={\MR{1118699 (92j:35050)}},
   doi={10.1090/S0273-0979-1992-00266-5},
}

\bib{E}{article}{
   author={Evans, Lawrence C.},
   title={The perturbed test function method for viscosity solutions of
   nonlinear PDE},
   journal={Proc. Roy. Soc. Edinburgh Sect. A},
   volume={111},
   date={1989},
   number={3-4},
   pages={359--375},
   issn={0308-2105},
   review={\MR{1007533 (91c:35017)}},
   doi={10.1017/S0308210500018631},
}

\bib{FW}{article}{
   author={Freidlin, Mark I.},
   author={Wentzell, Alexander D.},
   title={Random perturbations of Hamiltonian systems},
   journal={Mem. Amer. Math. Soc.},
   volume={109},
   date={1994},
   number={523},
   pages={viii+82},
   issn={0065-9266},
   review={\MR{1201269 (94j:35064)}},
   doi={10.1090/memo/0523},
}

\bib{IMZ}{article}{
   author={Imbert, Cyril},
   author={Monneau, R{\'e}gis},
   author={Zidani, Hasnaa},
   title={A Hamilton-Jacobi approach to junction problems and application to
   traffic flows},
   journal={ESAIM Control Optim. Calc. Var.},
   volume={19},
   date={2013},
   number={1},
   pages={129--166},
   issn={1292-8119},
   review={\MR{3023064}},
   doi={10.1051/cocv/2012002},
}

\bib{I}{article}{
   author={Ishii, Hitoshi},
   title={A boundary value problem of the Dirichlet type for Hamilton-Jacobi
   equations},
   journal={Ann. Scuola Norm. Sup. Pisa Cl. Sci. (4)},
   volume={16},
   date={1989},
   number={1},
   pages={105--135},
   issn={0391-173X},
   review={\MR{1056130}},
}

\bib{IS}{article}{
   author={Ishii, Hitoshi},
   author={Souganidis, Panagiotis E.},
   title={A pde approach to small stochastic perturbations of Hamiltonian
   flows},
   journal={J. Differential Equations},
   volume={252},
   date={2012},
   number={2},
   pages={1748--1775},
   issn={0022-0396},
   review={\MR{2853559}},
   doi={10.1016/j.jde.2011.08.036},
}

\bib{L}{book}{
   author={Lions, Pierre-Louis},
   title={Generalized solutions of Hamilton-Jacobi equations},
   series={Research Notes in Mathematics},
   volume={69},
   publisher={Pitman (Advanced Publishing Program), Boston, Mass.-London},
   date={1982},
   pages={iv+317},
   isbn={0-273-08556-5},
   review={\MR{667669 (84a:49038)}},
}

\bib{S}{article}{
   author={Sowers, Richard B.},
   title={Stochastic averaging near a homoclinic orbit with multiplicative
   noise},
   journal={Stoch. Dyn.},
   volume={3},
   date={2003},
   number={3},
   pages={299--391},
   issn={0219-4937},
   review={\MR{2017030}},
   doi={10.1142/S0219493703000759},
}   
\end{biblist}
\end{bibdiv}

(T. Kumagai) 
Department of Pure and Applied Mathematics, 
Graduate School of Fundamental Science and Engineering, Waseda University, Shinjuku, Tokyo 69-8050 Japan

E-mail: kumatai13@gmail.com

\end{document}